\documentclass[12pt]{article}

\usepackage{amsmath, amsthm, amssymb}
\usepackage[all]{xy}
\usepackage{fullpage}
\usepackage{titlesec}
\usepackage{mathrsfs}
\usepackage{amsbsy}
\usepackage{turnstile}
\usepackage{bbm}
\usepackage{yhmath}
\usepackage{tensor}

\usepackage{setspace}

\usepackage[pdftex,bookmarks=true]{hyperref}

\makeatletter
\newcommand*{\@old@slash}{}\let\@old@slash\slash
\def\slash{\relax\ifmmode\delimiter"502F30E\mathopen{}\else\@old@slash\fi}
\makeatother

\titleformat{\section}{\normalsize\bfseries}{\thesection}{1em}{}
\titleformat{\subsection}{\normalsize\bfseries}{\thesubsection}{1em}{}

\numberwithin{equation}{subsection}

\theoremstyle{plain}

\newtheorem{PropSub}[subsection]{Proposition}
\newtheorem{LemSub}[subsection]{Lemma}
\newtheorem{CorSub}[subsection]{Corollary}
\newtheorem{ThmSub}[subsection]{Theorem}

\newtheorem{PropSubSub}[subsubsection]{Proposition}

\theoremstyle{definition}

\newtheorem{DefSub}[subsection]{Definition}
\newtheorem{ExaSub}[subsection]{Example}
\newtheorem{RemSub}[subsection]{Remark}
\newtheorem{ParSub}[subsection]{}
\newtheorem{AssnSub}[subsection]{Assumption}

\newtheorem{ParSubSub}[subsubsection]{}

\newcommand*{\emptybox}{\leavevmode\hbox{}}

\newdir{ >}{{}*!/-10pt/@{>}}

\DeclareMathAlphabet{\mathpzc}{OT1}{pzc}{m}{it}
\DeclareMathAlphabet{\mathcalligra}{T1}{calligra}{m}{n}

\newcommand{\bref}[1]{\textnormal{\ref{#1}}}
\newcommand{\pbref}[1]{\textnormal{(\ref{#1})}}

\newcommand{\A}{\ensuremath{\mathscr{A}}}
\newcommand{\B}{\ensuremath{\mathscr{B}}}
\newcommand{\C}{\ensuremath{\mathscr{C}}}
\newcommand{\D}{\ensuremath{\mathscr{D}}}
\newcommand{\E}{\ensuremath{\mathscr{E}}}
\newcommand{\F}{\ensuremath{\mathscr{F}}}

\newcommand{\sH}{\ensuremath{\mathscr{H}}}

\newcommand{\J}{\ensuremath{\mathscr{J}}}
\newcommand{\K}{\ensuremath{\mathscr{K}}}

\newcommand{\M}{\ensuremath{\mathscr{M}}}
\newcommand{\N}{\ensuremath{\mathscr{N}}}

\newcommand{\V}{\ensuremath{\mathscr{V}}}

\newcommand{\X}{\ensuremath{\mathscr{X}}}
\newcommand{\Y}{\ensuremath{\mathscr{Y}}}

\newcommand{\uX}{\ensuremath{\underline{\mathscr{X}}}}

\newcommand{\uV}{\ensuremath{\underline{\mathscr{V}}}}

\newcommand{\VCAT}{\ensuremath{\V\textnormal{-\textbf{CAT}}}}

\newcommand{\CC}{\ensuremath{\mathbb{C}}}

\newcommand{\RR}{\ensuremath{\mathbb{R}}}
\newcommand{\SSS}{\ensuremath{\mathbb{S}}}
\newcommand{\TT}{\ensuremath{\mathbb{T}}}
\newcommand{\tTT}{\ensuremath{\widetilde{\mathbb{T}}}}

\newcommand{\ONEONE}{\ensuremath{\mathbbm{1}}}

\newcommand{\tT}{\ensuremath{\widetilde{T}}}

\newcommand{\ob}{\ensuremath{\operatorname{\textnormal{\textsf{Ob}}}}}
\newcommand{\mor}{\ensuremath{\operatorname{\textnormal{\textsf{Mor}}}}}
\newcommand{\Ob}{\ob}
\newcommand{\Mor}{\mor}
\newcommand{\Iso}{\ensuremath{\operatorname{\textnormal{\textsf{Iso}}}}}
\newcommand{\Epi}{\ensuremath{\operatorname{\textnormal{\textsf{Epi}}}}}
\newcommand{\Mono}{\ensuremath{\operatorname{\textnormal{\textsf{Mono}}}}}
\newcommand{\StrMono}{\ensuremath{\operatorname{\textnormal{\textsf{StrMono}}}}}
\newcommand{\StrEpi}{\ensuremath{\operatorname{\textnormal{\textsf{StrEpi}}}}}
\newcommand{\Sub}{\ensuremath{\operatorname{\textnormal{\textsf{Sub}}}}}

\newcommand{\Dense}{\ensuremath{\textnormal{\textsf{Dense}}}}
\newcommand{\ClEmb}{\ensuremath{\textnormal{\textsf{ClEmb}}}}
\newcommand{\DenseEmb}{\ensuremath{\textnormal{\textsf{DenseEmb}}}}
\newcommand{\DenseWrt}[1]{\ensuremath{#1\textnormal{-\textsf{Dense}}}}
\newcommand{\ClEmbWrt}[1]{\ensuremath{#1\textnormal{-\textsf{ClEmb}}}}
\newcommand{\DenseEmbWrt}[1]{\ensuremath{#1\textnormal{-\textsf{DenseEmb}}}}

\newcommand{\SigmaDense}{\DenseWrt{\Sigma}}
\newcommand{\SigmaClEmb}{\ClEmbWrt{\Sigma}}

\newcommand{\Shv}{\ensuremath{\textnormal{\textsf{Shv}}}}

\newcommand{\Set}{\ensuremath{\operatorname{\textnormal{\textbf{Set}}}}}
\newcommand{\Two}{\ensuremath{\operatorname{\textnormal{\textbf{2}}}}}
\newcommand{\SET}{\ensuremath{\operatorname{\textnormal{\textbf{SET}}}}}

\newcommand{\SNorm}{\ensuremath{\operatorname{\textnormal{\textbf{SNorm}}}}}
\newcommand{\Norm}{\ensuremath{\operatorname{\textnormal{\textbf{Norm}}}}}
\newcommand{\Ban}{\ensuremath{\operatorname{\textnormal{\textbf{Ban}}}}}

\newcommand{\Adj}{\ensuremath{\operatorname{\textnormal{\textbf{Adj}}}}}
\newcommand{\Mnd}{\ensuremath{\operatorname{\textnormal{\textbf{Mnd}}}}}
\newcommand{\IdmMnd}{\ensuremath{\operatorname{\textnormal{\textbf{IdmMnd}}}}}
\newcommand{\Refl}{\ensuremath{\operatorname{\textnormal{\textbf{Refl}}}}}

\newcommand{\subs}{\ensuremath{\subseteq}}
\newcommand{\sups}{\ensuremath{\supseteq}}

\newcommand{\lt}{\leqslant}

\newcommand{\op}{\ensuremath{\textnormal{op}}}

\newcommand{\pushoutcorner}{\ar@{}[dr]|(.3)\ulcorner}
\newcommand{\pullbackcorner}{\ar@{}[dr]|(.3)\lrcorner}

\begin{document}

\author{\normalsize  Rory B. B. Lucyshyn-Wright\thanks{The author gratefully acknowledges financial support in the form of an NSERC Postdoctoral Fellowship, as well as earlier, partial financial support in the form of an Ontario Graduate Scholarship.}\let\thefootnote\relax\footnote{Keywords: completion; closure; density; monad; idempotent monad; idempotent core; idempotent approximation; normed vector space; adjunction; reflective subcategory; enriched category; factorization system; orthogonal subcategory; sheaf; sheafification; Lawvere-Tierney topology; monoidal category; closed category}\footnote{2010 Mathematics Subject Classification: 18A20, 18A22, 18A30, 18A32, 18A40, 18B25, 18C15, 18D15, 18D20, 18F10, 18F20, 46B04, 46B10, 46B28, 46M99}
\\
\small University of Ottawa, 585 King Edward Ave., Ottawa, ON, Canada K1N 6N5}

\title{\large \textbf{Completion, closure, and density relative to a monad,\\ with examples in functional analysis and sheaf theory}}

\date{}

\maketitle

\abstract{Given a monad $\TT$ on a suitable enriched category $\B$ equipped with a proper factorization system $(\E,\M)$, we define notions of \textit{$\TT$-completion}, \textit{$\TT$-closure}, and \textit{$\TT$-density}.  We show that not only the familiar notions of completion, closure, and density in normed vector spaces, but also the notions of sheafification, closure, and density with respect to a Lawvere-Tierney topology, are instances of the given abstract notions.  The process of $\TT$-completion is equally the enriched idempotent monad associated to $\TT$ (which we call the \textit{idempotent core of $\TT$}), and we show that it exists as soon as every morphism in $\B$ factors as a $\TT$-dense morphism followed by a $\TT$-closed $\M$-embedding.  The latter hypothesis is satisfied as soon as $\B$ has certain pullbacks as well as wide intersections of $\M$-embeddings.  Hence the resulting theorem on the existence of the idempotent core of an enriched monad entails Fakir's existence result in the non-enriched case, as well as adjoint functor factorization results of Applegate-Tierney and Day.}

\section{Introduction} \label{sec:intro}

Examples of monads abound throughout mathematics, particularly since every adjunction determines one, yet monads play also a seemingly more narrow role as theories of algebraic structure, each monad $\TT$ on a category $\B$ determining a category $\B^\TT$ of $\TT$-algebras.  Working in the context of the theory of categories enriched over a symmetric monoidal closed category $\V$, we show herein that, on the other hand, every monad $\TT$ on any suitable category $\B$ also gives rise to concepts that are seemingly more `topological' in nature, namely, canonical notions of \textit{closure}, \textit{density}, \textit{completeness}, \textit{completion}, and \textit{separatedness} with respect to $\TT$.  As a guiding example, we show that when $\B$ is the category of normed or semi-normed vector spaces and $\TT$ is the monad given by taking the double-dual, the resulting notions with respect to $\TT$ coincide with the familiar notions.  As an example of a very different sort, we show that when $\B$ is an (elementary) topos equipped with a Lawvere-Tierney topology $j$ and we take $\TT$ to be the double-dualization monad for $\Omega_j$, we recover the notions of $j$-closure, $j$-density, $j$-sheaf, $j$-sheafification, and $j$-separatedness.

In more detail, let $\TT$ be a monad on a $\V$-category $\B$ equipped with a proper $\V$-enriched factorization system $(\E,\M)$ (\cite{Lu:EnrFactnSys}), and refer to the morphisms in $\M$ as \textit{embeddings}.  Using techniques of enriched factorization systems and orthogonality, we define notions of \textit{$\TT$-dense morphism}, \textit{$\TT$-closed embedding}, and \textit{$\TT$-complete} object.  Next, we make the following assumption, which is satisfied as soon as $\B$ is complete and well-powered with respect to $\M$:  
\begin{equation}\label{eq:assn}
\begin{minipage}{3in}
\textit{Every morphism in $\B$ factors as a $\TT$-dense morphism followed by a $\TT$-closed embedding}.
\end{minipage}
\end{equation}
Under this assumption, the $\TT$-dense morphisms and $\TT$-closed embeddings constitute an enriched factorization system, and we obtain an operation of \textit{$\TT$-closure} of $\M$-subobjects.  We then show that the full subcategory $\B_{(\TT)}$ of $\B$ consisting of the $\TT$-complete objects is reflective in $\B$, and the resulting idempotent monad $\tTT$ on $\B$ we call the \textit{$\TT$-completion} monad.  The $\TT$-completion $\tT B$ of an object $B \in \B$ is gotten as in the example of normed vector spaces:  We take $\tT B$ to be the $\TT$-closure of the $(\E,\M)$-image of the unit morphism $\eta_B:B \rightarrow TB$.

The resulting $\TT$-completion monad $\tTT$ is an idempotent $\V$-enriched monad on $\B$ that inverts (i.e., sends to isomorphisms) exactly the same morphisms as $\TT$ and so is \textit{the idempotent core\footnote{Terminology suggested by F. W. Lawvere.} of $\TT$}, studied in the context of non-enriched categories by Casacuberta and Frei \cite{CasFrei} (under the name of the \textit{idempotent approximation} of $\TT$) and earlier by Fakir \cite{Fak}, who had shown that the idempotent core of a monad on an ordinary category $\B$ exists as soon as $\B$ is complete and well-powered.  Our construction of the $\TT$-completion monad shows that the $\V$-enriched idempotent core $\tTT$ of $\TT$ exists as soon as the factorization assumption \eqref{eq:assn} is satisfied, and Fakir's result is recovered as a corollary.  Note that our result applies even in the absence of set-indexed limits and so applies, for example, in constructing the $j$-sheafification monad for a Lawvere-Tierney topology $j$ on an arbitrary (elementary) topos.

After recalling some preliminary material on monads and adjunctions in 2-categories (\S \bref{sec:2cat_prelims}), enriched categories (\S \bref{sec:prel_enr}), enriched factorization systems (\S \bref{sec:enr_factn_sys}), and closure operators (\S \bref{sec:cl_ops}), we treat aspects of enriched orthogonal subcategories needed in the sequel (\S \bref{sec:orth_pres_adj}).  Next, we treat the basic theory of the idempotent core of an enriched monad (\S \bref{sec:defs}), as no such treatment exists in the literature; in particular, we consider the univeral property of the idempotent core $\tTT$, we establish several equivalent characterizations of $\tTT$ and of its existence, and we examine the relation of the enriched idempotent core to the ordinary.  Next we show that the completion monad on normed (resp. seminormed) vector spaces is the enriched idempotent core of the double-dualization monad (\S \bref{sec:compl_nvs}); as a corollary, we show that the full subcategory consisting of all Banach spaces is the enriched reflective hull of the space of scalars ($\RR$ or $\CC$).  In \S \bref{sec:cmpl_cl_dens} we then define the notions of $\TT$-density, $\TT$-closure, etc., and we prove our general result on the existence of the idempotent $\V$-core $\tTT$, Theorem \bref{thm:sigma_s_complete_are_sep_and_refl}.  In \S \bref{sec:cl_dens_nvs} we return to the example of normed and seminormed vector spaces and show that the notions of $\TT$-density and $\TT$-closure there coincide with the familiar notions.  In \S \bref{sec:shfn_cl_dens} we treat the example of $j$-sheafification, $j$-closure, and $j$-density for a Lawvere-Tierney topology $j$ on a topos $\X$, and we show that the category of $j$-sheaves is the $\X$-enriched reflective hull of $\Omega_j$ in $\X$; note that Lambek and Rattray had shown in \cite{LamRa:LocShRefl} that the $j$-sheafication monad can be obtained by Fakir's construction.

In treating the basic theory of the idempotent core $\tTT$ of an enriched monad $\TT$ on a $\V$-category $\B$, we show in \bref{thm:charn_idm_approx} that if $F \dashv G:\C \rightarrow \B$ is an arbitrary $\V$-adjunction inducing $\TT$, then $\tTT$ exists if and only if $F \dashv G$ factors as a composite $\V$-adjunction
$$
\xymatrix {
\B \ar@/_0.5pc/[rr]_{K}^(0.4){}^(0.6){}^{\top} & & {\B'} \ar@{_{(}->}@/_0.5pc/[ll]_{J} \ar@/_0.5pc/[rr]_{F'}^(0.4){}^(0.6){}^{\top} & & {\C} \ar@/_0.5pc/[ll]_{G'}
}
$$
with $J$ a $\V$-reflective full subcategory inclusion and $F'$ conservative.  Hence our Theorem \bref{thm:sigma_s_complete_are_sep_and_refl} on the existence of $\tTT$ yields a generalization on results of Applegate and Tierney \cite{AppTie} and Day \cite{Day:AdjFactn} to the effect that any adjunction $F \dashv G$ on a suitable category factors in such a way.  An important point that was not made by these authors is that the resulting reflective subcategory $\B'$ depends only on the \textit{monad} $\TT$ induced by $F \dashv G$.  Cassidy, H\'ebert, and Kelly later proved in the non-enriched context a variant of Day's adjoint-factorization result (\cite[Theorem 3.3]{CHK}), and in the second paragraph of their proof they make use of an instance of what we now call here the ($\TT$-dense, $\TT$-closed embedding)-factorization system, there written as $(\N^\uparrow,\N)$.

We consider also the following refinement of the theory of $\TT$-completeness, $\TT$-density, etc.:  Given an enriched monad $\TT$ on a $\V$-category $\B$ equipped with a proper $\V$-prefactorization-system $(\E,\M)$ (\cite{Lu:EnrFactnSys}), together with a given class $\Sigma$ consisting of morphisms inverted by $T$, we define notions of \textit{$\Sigma$-dense morphism}, \textit{$\Sigma$-closed embedding}, and \textit{$\Sigma$-complete object}, and again assuming that every morphism in $\B$ factors as a $\Sigma$-dense morphism followed by a $\Sigma$-closed embedding, we show that those $\Sigma$-complete objects $B \in \B$ that are also \textit{$\TT$-separated} (meaning that $\eta_B:B \rightarrow TB$ is an embedding) constitute a $\V$-reflective subcategory $\B_{(\TT,\Sigma)}$ of $\B$ \pbref{thm:refl_orth}, so that we obtain an idempotent monad $\tTT_\Sigma$, the \textit{$\TT$-separated $\Sigma$-completion monad}.  This reflectivity is related to Day's result \cite[Corollary 2.3]{Day:AdjFactn}.  The `$\TT$-' rather than `$\Sigma$-' notions are recovered when $\Sigma := T^{-1}(\Iso)$, and in the latter case it is notable that every $\TT$-complete object is necessarily $\TT$-separated, provided $(\E,\M)$-factorizations exist.

The given notions of completeness, closure, and density with respect to an enriched monad $\TT$ and/or a class of morphisms $\Sigma$ were employed in the author's recent Ph.D. thesis \cite{Lu:PhD}, in which they were applied with regard to $R$-module objects in a cartesian closed category (and generalizations thereupon) in providing a basis for abstract functional analysis in a closed category.  In such a context, the usual notions of completeness, closure, and density familiar from functional analysis are not typically available, and so one may instead employ the above `$\TT$-' notions, with respect to the double-dualization monad $\TT$, and these we call \textit{functional completeness}, \textit{functional closure}, and \textit{functional density}.

\section{Preliminaries}\label{sec:prelims}

\subsection{2-categorical preliminaries}\label{sec:2cat_prelims}

\begin{ParSubSub}\label{par:mnd_morphs}
Given an object $\B$ in a 2-category $\K$, there is a category $\Mnd_\K(\B)$ whose objects are monads on $\B$ and whose morphisms $\theta:(T,\eta,\mu) \rightarrow (T',\eta',\mu')$ (called \textit{maps of monads} in \cite{KeStr:RevEl2Cats}) consist of a 2-cell $\theta:T \rightarrow T'$ such that $\theta \cdot \eta = \eta'$ and $\mu' \cdot (\theta \circ \theta) = \theta \cdot \mu$.  The identity monad $\ONEONE_\B$ is an initial object in $\Mnd_\K(\B)$, since for each monad $\TT = (T,\eta,\mu)$ on $\B$, the 2-cell $\eta$ is the unique monad morphism $\eta:\ONEONE_\B \rightarrow \TT$.
\end{ParSubSub}

\begin{ParSubSub}\label{par:talg}
Given a monad $\TT = (T,\eta,\mu)$ on $\B$ in a 2-category $\K$, a \textit{$\TT$-algebra} $(B,\beta)$ in $\K$ (\cite[3.1]{KeStr:RevEl2Cats}) consists of a 1-cell $B:\A \rightarrow \B$ equipped with an \textit{action} of $\TT$ on $B$, i.e. a 2-cell $\beta:TB \rightarrow B$ with $\beta \cdot \eta B = 1_B$ and $\beta \cdot T\beta = \beta \cdot \mu B$.  Given a morphism of monads $\theta:\TT \rightarrow \TT' = (T',\eta',\mu')$, it is shown in \cite[(3.8), (3.9)]{KeStr:RevEl2Cats} that the composite $\beta := (TT' \xrightarrow{\theta T'} T'T' \xrightarrow{\mu'} T')$ is an action of $\TT$ on $T'$ and that $\theta$ can be expressed in terms of $\beta$ as the composite $T \xrightarrow{T\eta'} TT' \xrightarrow{\beta} T'$.
\end{ParSubSub}

\begin{ParSubSub}\label{par:idm_mnd}
Recall that a monad $\SSS = (S,\rho,\lambda)$ on $\B$ in $\K$ is said to be \textit{idempotent} if $\lambda:SS \rightarrow S$ is an isomorphism (from which it then follows that $\lambda^{-1} = S\rho = \rho S$).  If $\SSS$ is idempotent, then for any $\SSS$-algebra $(B,\beta)$ \pbref{par:talg}, the 1-cell $\beta:SB \rightarrow B$ is an isomorphism with inverse $\rho B$.
\end{ParSubSub}

\begin{ParSubSub}\label{par:adj}
Given objects $\A$, $\B$ in a 2-category $\K$, there is a category $\Adj_\K(\A,\B)$ whose objects are adjunctions $F \nsststile{\varepsilon}{\eta} G : \B \rightarrow \A$ in $\K$ and whose morphisms $(\phi,\psi):(F \nsststile{\varepsilon}{\eta} G) \rightarrow (F' \nsststile{\varepsilon'}{\eta'} G')$ consist of 2-cells $\phi:F \rightarrow F'$ and $\psi:G \rightarrow G'$ such that $(\psi \circ \phi) \cdot \eta = \eta'$ and $\varepsilon' \cdot (\phi \circ \psi) = \varepsilon$.

There is a functor $\Adj_{\K}(\A,\B) \rightarrow \Mnd_\K(\A)$ sending an adjunction to its induced monad and a morphism $(\phi,\psi):(F \nsststile{\varepsilon}{\eta} G) \rightarrow (F' \nsststile{\varepsilon'}{\eta'} G')$ to the morphism $\psi \circ \phi:\TT \rightarrow \TT'$ between the induced monads.  Hence, in particular, isomorphic adjunctions induce isomorphic monads.
\end{ParSubSub}

\begin{PropSubSub} \label{thm:mon_func_detd_by_adj}
Let $F \nsststile{\varepsilon}{\eta} G : \B \rightarrow \A$ be an adjunction in a 2-category $\K$.  Then there is an associated monoidal functor $[F,G] := \K(F,G) : \K(\B,\B) \rightarrow \K(\A,\A)$ with the following property:  For any adjunction $F' \nsststile{\varepsilon'}{\eta'} G':\C \rightarrow \B$ with induced monad $\TT'$ on $\B$, the monad $[F,G](\TT')$ on $\A$ is equal to the monad induced by the composite adjunction 
$$\xymatrix{\A \ar@/_0.5pc/[rr]_F^(0.4){\eta}^(0.6){\varepsilon}^{\top} & & \B \ar@/_0.5pc/[ll]_G \ar@/_0.5pc/[rr]_{F'}^(0.4){\eta'}^(0.6){\varepsilon'}^{\top} & & {\C\;.} \ar@/_0.5pc/[ll]_{G'}}$$
\end{PropSubSub}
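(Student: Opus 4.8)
The plan is to make the monoidal structure on $[F,G]=\K(F,G)$ fully explicit and then appeal to the elementary fact that a (lax) monoidal functor sends monoids to monoids, recalling that a monad on an object $\X\in\K$ is exactly a monoid in the strict monoidal category $(\K(\X,\X),\circ,1_\X)$ (cf.\ \bref{par:mnd_morphs}). On objects $\K(F,G)$ sends a $1$-cell $H:\B\to\B$ to $GHF:\A\to\A$, and I would equip it with the unit constraint $e:=\eta:1_\A\to GF=\K(F,G)(1_\B)$ and the multiplication constraint
$$m_{H,H'}\;:=\;GH\varepsilon H'F\;:\;(GHF)(GH'F)=GH(FG)H'F\;\longrightarrow\;GHH'F$$
obtained by whiskering the counit $\varepsilon:FG\to 1_\B$.

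First I would check the coherence axioms. The associativity axiom for $m$ is automatic: the two composites $GHFGH'FGH''F\to GHH'H''F$ both insert $\varepsilon$ at the two disjoint interior copies of $FG$, so they agree by the interchange law, with no appeal to the adjunction needed. The two unit axioms are precisely the triangle identities of $F\dashv G$: after whiskering, the left-unit composite collapses to $(G\varepsilon\cdot\eta G)HF=1_{GHF}$ and the right-unit composite to $GH(\varepsilon F\cdot F\eta)=1_{GHF}$. This shows $[F,G]$ is a lax monoidal functor; the constraints are not invertible in general, but laxness is all that is needed.

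Since lax monoidal functors preserve monoids, $[F,G]$ carries $\TT'=(T',\eta',\mu')$ to the monad on $\A$ with underlying endomorphism $GT'F$, unit $G\eta'F\cdot\eta$ (namely $e$ followed by $[F,G](\eta')$), and multiplication $G\mu'F\cdot GT'\varepsilon T'F$ (namely $m_{T',T'}$ followed by $[F,G](\mu')$). It then remains to match this against the monad induced by the composite adjunction $F'F\dashv GG'$. Its underlying endomorphism is $GG'F'F=GT'F$, and its unit, by the usual composite-adjunction formula, is $G\eta'F\cdot\eta$; both agree on the nose. For the multiplication I would expand the composite counit $\varepsilon''=\varepsilon'\cdot F'\varepsilon G'$ and whisker to obtain $(GG')\varepsilon''(F'F)=GG'\varepsilon'F'F\cdot GG'F'\varepsilon G'F'F$; recognizing $G'\varepsilon'F'=\mu'$ and $G'F'=T'$ rewrites the two factors as $G\mu'F$ and $GT'\varepsilon T'F$ respectively, yielding exactly the multiplication produced by the monoidal functor. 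Hence the two monads coincide.

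The only genuine friction I anticipate is the whiskering bookkeeping in this last comparison: keeping $\varepsilon$ and $\varepsilon'$ in their correct positions and locating $\mu'=G'\varepsilon'F'$ inside $(GG')\varepsilon''(F'F)$. As a calculation-free alternative to hold in reserve, I would note that $\TT'=\K(F',G')(\ONEONE_\C)$ is the image of the trivial monad under the lax monoidal functor $\K(F',G')$, and combine this with the evident identity $\K(F,G)\circ\K(F',G')=\K(F'F,GG')$ furnished by functoriality of $\K(-,-)$; then $[F,G](\TT')=\K(F'F,GG')(\ONEONE_\C)$ is by the same token the monad induced by the composite adjunction, provided one verifies that this functoriality is compatible with the lax monoidal structures.
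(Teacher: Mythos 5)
Your proposal is correct and takes essentially the same route as the paper: the paper's proof consists precisely of equipping $[F,G]$ with the unit constraint $\eta:1_\A \rightarrow GF$ and the multiplication constraints $GH\varepsilon KF:GHFGKF \rightarrow GHKF$, declaring the verification straightforward. You have simply carried out that verification in full (triangle identities for the unit axioms, interchange for associativity, and the whiskering computation identifying $[F,G](\TT')$ with the monad of the composite adjunction), so there is nothing to correct.
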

\begin{proof}
The monoidal structure on the functor $[F,G]$ consists of the morphisms $GH\varepsilon KF:GHFGKF \rightarrow GHKF$ in $\K(\A,\A)$ (for all objects $H$, $K$ in $\K(\B,\B)$) and the morphism $\eta:1_\A \rightarrow GF$ in $\K(\A,\A)$.  The verification is straightforward.
\end{proof}

\begin{PropSubSub} \label{thm:uniq_adj}
Let $F \nsststile{\varepsilon}{\eta} G$ and $F' \nsststile{\varepsilon'}{\eta'} G$ be adjunctions, having the same right adjoint $G:\B \rightarrow \A$, in a 2-category $\K$.  Then these adjunctions are isomorphic and hence induce isomorphic monads on $\A$.
\end{PropSubSub}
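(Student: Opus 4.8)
The statement is the 2-categorical form of the uniqueness of left adjoints, so the plan is to exhibit an invertible 2-cell $\phi : F \to F'$ for which the pair $(\phi, 1_G)$ is a morphism in $\Adj_\K(\A,\B)$. Once this is done, the adjunctions are isomorphic objects of $\Adj_\K(\A,\B)$, and the final assertion follows immediately by applying the functor $\Adj_\K(\A,\B) \to \Mnd_\K(\A)$ of \pbref{par:adj}, which carries isomorphisms to isomorphisms (as noted there, isomorphic adjunctions induce isomorphic monads).

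First I would define $\phi$ as the transpose (mate) of $1_G$, namely the composite
$$\phi := \bigl(F \xrightarrow{F\eta'} FGF' \xrightarrow{\varepsilon F'} F'\bigr),$$
together with the symmetric candidate inverse
$$\phi^{-1} := \bigl(F' \xrightarrow{F'\eta} F'GF \xrightarrow{\varepsilon' F} F\bigr).$$
I would then verify the two conditions of \pbref{par:adj} for $(\phi,1_G)$, which (since $1_G \circ \phi = G\phi$ and $\phi \circ 1_G = \phi G$) read $G\phi \cdot \eta = \eta'$ and $\varepsilon' \cdot \phi G = \varepsilon$. Each reduces, after a single application of the interchange law — commuting $\eta$ past $\eta'$ in the first, and $\varepsilon'$ past $\varepsilon$ in the second — to a triangle identity: the first becomes $(G\varepsilon \cdot \eta G)F' = 1_{GF'}$, using the triangle identity $G\varepsilon \cdot \eta G = 1_G$ of $F \dashv G$, and the second becomes $F(G\varepsilon' \cdot \eta' G) = 1_{FG}$, using the triangle identity $G\varepsilon' \cdot \eta' G = 1_G$ of $F' \dashv G$.

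It remains to check $\phi^{-1} \cdot \phi = 1_F$ and $\phi \cdot \phi^{-1} = 1_{F'}$. The clean route is to observe that $\phi \mapsto 1_G$ is an instance of the mate bijection between 2-cells $F \to F'$ and 2-cells $G \to G$, that taking mates reverses vertical composition, and that the mate of $1_F$ is $1_G$; since both $\phi$ and $\phi^{-1}$ arise as mates of $1_G$, injectivity of the correspondence forces the two composites to equal $1_F$ and $1_{F'}$ respectively. Alternatively these equations can be checked directly, once more by interchange together with both triangle identities. This shows $(\phi,1_G)$ is an isomorphism in $\Adj_\K(\A,\B)$, completing the proof. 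The only labour is the 2-cell bookkeeping (whiskering, interchange, triangle identities) in the last two steps; since every move is forced, no genuine obstacle arises, and indeed these are exactly the computations the paper elsewhere treats as straightforward.
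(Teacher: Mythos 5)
Your proof is correct and takes essentially the same approach as the paper: the paper defines the very same 2-cell $\phi = \varepsilon F' \cdot F\eta'$ with the same inverse $\varepsilon' F \cdot F'\eta$, and asserts that $(\phi,1_G)$ is the required isomorphism of adjunctions, citing Gray (I,6.3) for the invertibility and leaving the remaining verification to the reader. Your argument simply fills in those omitted checks (interchange plus triangle identities, and the mate bijection for invertibility), so it is the same proof carried out in full detail.
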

\begin{proof}
The 2-cell $\phi := \varepsilon F' \cdot F \eta':F \rightarrow F'$ has inverse $\varepsilon' F \cdot F' \eta$ (\cite{Gray}, I,6.3), and one checks that $(\phi,1_G)$ serves as the needed isomorphism of adjunctions.
\end{proof}

\begin{PropSubSub} \label{thm:mnd_morph_from_adj_factn}
Let $\xymatrix{\A \ar@/_0.5pc/[rr]_F^(0.4){\eta}^(0.6){\varepsilon}^{\top} & & \B \ar@/_0.5pc/[ll]_G \ar@/_0.5pc/[rr]_{F'}^(0.4){\eta'}^(0.6){\varepsilon'}^{\top} & & \C \ar@/_0.5pc/[ll]_{G'}}$ and $\xymatrix{\A \ar@/_0.5pc/[rr]_{F''}^(0.4){\eta''}^(0.6){\varepsilon''}^{\top} & & \C \ar@/_0.5pc/[ll]_{G''}}$ be adjunctions in a 2-category $\K$, with respective induced monads $\TT$, $\TT'$, $\TT''$, and suppose that $GG' = G''$.  Then there is an associated monad morphism $\TT \rightarrow \TT''$.
\end{PropSubSub}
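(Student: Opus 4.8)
The plan is to exhibit the monad morphism $\TT \to \TT''$ as the image of the initial monad morphism $\eta':\ONEONE_\B \to \TT'$ under the monoidal functor $[F,G]$ of \bref{thm:mon_func_detd_by_adj}, after identifying the relevant monads. First I would consider the composite adjunction $F'F \dashv GG':\C \to \A$ (the composite displayed in \bref{thm:mon_func_detd_by_adj}), whose right adjoint is $GG' = G''$. By \bref{thm:mon_func_detd_by_adj}, the monad induced by this composite is exactly $[F,G](\TT')$, where $[F,G]:\K(\B,\B) \to \K(\A,\A)$ is the monoidal functor associated to $F \dashv G$.

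The crux is then to package this monoidally. Since $\Mnd_\K(\A)$ is precisely the category of monoids in the strict monoidal category $(\K(\A,\A),\circ,1_\A)$, and likewise over $\B$, the (lax) monoidal functor $[F,G]$ induces a functor $\Mnd_\K(\B) \to \Mnd_\K(\A)$ between the associated monad categories, sending each monad morphism $\theta$ to $[F,G](\theta)$. Applying this functor to the unique monad morphism $\eta':\ONEONE_\B \to \TT'$ out of the initial monad \pbref{par:mnd_morphs}, I obtain a monad morphism $[F,G](\eta'):[F,G](\ONEONE_\B) \to [F,G](\TT')$. It then remains to identify the two ends. Inspecting the explicit monoidal structure recorded in the proof of \bref{thm:mon_func_detd_by_adj} --- unit constraint $\eta:1_\A \to GF$ and, on taking $H = K = 1_\B$, multiplication constraint $G\varepsilon F:GFGF \to GF$ --- shows that the monoid $[F,G](\ONEONE_\B)$ is none other than $\TT = (GF,\eta,G\varepsilon F)$.

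Finally, I would compare the codomain with $\TT''$. Both the composite adjunction $F'F \dashv GG'$ and the given adjunction $F'' \dashv G''$ have the same right adjoint $G'' = GG'$, so by \bref{thm:uniq_adj} they are isomorphic and induce isomorphic monads; that is, there is a monad isomorphism $[F,G](\TT') \cong \TT''$. Composing this isomorphism with $[F,G](\eta')$ then yields the desired monad morphism $\TT \to \TT''$. The step demanding the most care --- and the main potential obstacle --- is the passage from $[F,G]$ being merely \emph{lax} monoidal to the assertions that it carries monoids and monoid morphisms to monoids and monoid morphisms and that $[F,G](\ONEONE_\B)$ equals $\TT$ on the nose rather than only up to isomorphism; both reduce to routine verifications with the constraint cells $GH\varepsilon KF$ and $\eta$ via the triangle identities for $F \dashv G$.
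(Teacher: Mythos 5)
Your proposal is correct and follows essentially the same route as the paper's own proof: form the composite adjunction, identify its induced monad as $[F,G](\TT')$ and $\TT$ as $[F,G](\ONEONE_\B)$ via \bref{thm:mon_func_detd_by_adj}, apply $[F,G]$ to the initial-monad morphism $\eta':\ONEONE_\B \rightarrow \TT'$, and compose with the monad isomorphism $[F,G](\TT') \cong \TT''$ supplied by \bref{thm:uniq_adj}. The only (harmless) difference is that you verify $[F,G](\ONEONE_\B) = \TT$ by inspecting the constraint cells, whereas the paper obtains it as the special case of \bref{thm:mon_func_detd_by_adj} where the second adjunction is the identity.
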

\begin{proof}
Let $F_c \nsststile{\varepsilon_c}{\eta_c} G''$ be the composite adjunction, and let $\TT_c$ be its induced monad.  By \bref{thm:mon_func_detd_by_adj}, we have that $\TT_c = [F,G](\TT')$, whereas $\TT = [F,G](\ONEONE_\B)$.  By applying $[F,G]$ to the monad morphism $\eta':\ONEONE_\B \rightarrow \TT'$, we obtain a monad morphism
$$G \eta' F = [F,G](\eta') : \TT = [F,G](\ONEONE_\B) \rightarrow [F,G](\TT') = \TT_c\;.$$
Also, by \bref{thm:uniq_adj}, there is an isomorphism of monads $\xi:\TT_c \rightarrow \TT''$, and we obtain a composite morphism of monads $\TT \xrightarrow{G \eta' F} \TT_c \xrightarrow{\xi} \TT''$.
\end{proof}

\subsection{Preliminaries on enriched categories}\label{sec:prel_enr}

In what follows, we work in the context of the theory of categories enriched in a  symmetric monoidal category $\V$, as documented in the seminal paper \cite{EiKe} and the comprehensive references \cite{Ke:Ba}, \cite{Dub}.  We shall include an explicit indication of $\V$ when employing notions such as $\V$-category, $\V$-functor, and so on, omitting the prefix $\V$ only when concerned with the corresponding notions for non-enriched or \textit{ordinary} categories.  When such ordinary notions and terminology are applied to a given $\V$-category $\A$, they should be interpreted relative to the underlying ordinary category of $\A$.  In the absence of any indication to the contrary, we will assume throughout that $\V$ is a \textit{closed} symmetric monoidal category, and in this case we denote by $\uV$ the $\V$-category canonically associated to $\V$, whose underlying ordinary category is isomorphic to $\V$; in particular, the internal homs in $\V$ will therefore be denoted by $\uV(V_1,V_2)$.  We do not assume that any limits or colimits exist in $\V$.

\begin{ParSubSub}\label{par:cat_classes}
The ordinary categories $\C$ considered in this paper are \textit{not} assumed \textit{locally small}---that is, they are not necessarily $\Set$-enriched categories.  Rather, we assume that for each category $\C$ under consideration, there is a category $\SET$ of \textit{classes} in which lie the hom-classes of $\C$, so that $\C$ is $\SET$-enriched, but $\SET$ is not assumed cartesian closed.
\end{ParSubSub}

\begin{ParSubSub}\label{par:vmono}
A morphism $m:B_1 \rightarrow B_2$ in a $\V$-category $\B$ (i.e., in the underlying ordinary category of $\B$) is a \textit{$\V$-mono(morphism)} if $\B(A,m):\B(A,B_1) \rightarrow \B(A,B_2)$ is a monomorphism in $\V$ for each object $A \in \B$.  A \textit{$\V$-epi(morphism)} is a $\V$-mono in $\B^\op$.  We denote the classes of all $\V$-monos and $\V$-epis in $\B$ by $\Mono_\V\B$ and $\Epi_\V\B$, respectively.
\end{ParSubSub}

\begin{ParSubSub}\label{par:vlimits}
A \textit{$\V$-limit} in a $\V$-category $\B$ is a \textit{conical} $\V$-enriched limit in the sense of \cite{Ke:Ba} \S 3.8, equivalently a limit (in the underlying ordinary category of $\B$) that is preserved by each (ordinary) functor $\B(A,-):\B \rightarrow \V$ $(A \in \B)$.  In particular, we obtain the notions of $\V$-product, $\V$-fibre-product, etc.  \textit{$\V$-colimits} in $\B$ are $\V$-limits in $\B^\op$.  Any $\V$-fibre-product $m:A \rightarrow B$ of a family of $\V$-monos $(m_i:A_i \rightarrow B)_{i \in I}$ is again a $\V$-mono (\cite{Lu:EnrFactnSys} 2.7); we then say that $m$ is a \textit{$\V$-intersection} of the $m_i$.
\end{ParSubSub}

\begin{PropSubSub} \label{prop:adjn_via_radj_and_unit}
Let $G:\C \rightarrow \B$ be a $\V$-functor, and suppose that for each $B \in \B$ we are given an object $FB$ in $C$ and a morphism $\eta_B:B \rightarrow GFB$ in $\B$ such that for each $C \in \C$, the composite
$$\phi_{B C} := \left(\C(FB,C) \xrightarrow{G_{FB C}} \B(GFB,GC) \xrightarrow{\B(\eta_B,GC)} \B(B,GC)\right)$$
is an isomorphism in $\V$.  Then the given morphisms $\eta_B$ constitute the unit $\eta$ of a $\V$-adjunction \mbox{$F \nsststile{}{\eta} G$} in which $F$ acts in the given way on objects and is given on homs by formula (6) of Ch. 0 of \textnormal{\cite{Dub}}.
\end{PropSubSub}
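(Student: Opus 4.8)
The plan is to build the $\V$-functor $F$ from the given data by the only possible formula and then to read off the adjunction from the hypothesis via the standard unit-characterization of a $\V$-adjunction. The first thing to record is that, for fixed $B$, the family $\phi_{BC}$ is $\V$-natural in $C$: it is the composite of the family $G_{FB,C}$ (the hom-action of the $\V$-functor $G$, which assembles into a $\V$-natural transformation $\C(FB,-)\Rightarrow\B(GFB,G-)$) with the family $\B(\eta_B,GC)$ (precomposition with $\eta_B$, whiskered by $G$, hence $\V$-natural in $C$). Thus each $\phi_{B-}:\C(FB,-)\Rightarrow\B(B,G-)$ is a $\V$-natural isomorphism, exhibiting $FB$ as a representing object for the $\V$-functor $\B(B,G-):\C\to\uV$; conceptually this is all one needs, but to obtain the explicit $F$ and unit $\eta$ demanded by the statement I would proceed by hand.

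First I would define $F$ on hom-objects by the formula of Dubuc,
$$F_{B_1 B_2} := \phi_{B_1, F B_2}^{-1} \circ \B(B_1, \eta_{B_2}) : \B(B_1, B_2) \to \C(F B_1, F B_2),$$
where $\B(B_1,\eta_{B_2})$ is postcomposition with $\eta_{B_2}$ and $\phi_{B_1,FB_2}$ is the given isomorphism at $C=FB_2$. With $F$ so defined, the $\V$-naturality of $\eta:1_\B\Rightarrow GF$ is immediate: its naturality square for $(B_1,B_2)$ asks that $\B(\eta_{B_1},GFB_2)\circ G_{FB_1,FB_2}\circ F_{B_1B_2}=\B(B_1,\eta_{B_2})$, and since $\B(\eta_{B_1},GFB_2)\circ G_{FB_1,FB_2}=\phi_{B_1,FB_2}$ (this is $\phi_{B_1 C}$ at $C=FB_2$), the left-hand side collapses to $\phi_{B_1,FB_2}\circ F_{B_1B_2}=\B(B_1,\eta_{B_2})$, which holds by construction. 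Next I would check that $F$ is a $\V$-functor. Because each $\phi_{B_1,FB_2}$ is an isomorphism, hence a monomorphism, it suffices to verify the identity and composition axioms after postcomposition with the relevant $\phi$, which by the defining formula simply replaces $F$ by postcomposition with $\eta$ and thereby confines the whole computation to hom-objects of $\B$. The identity axiom then reduces to the equality of two names of $\eta_B$ in $\B(B,GFB)$, and the composition axiom reduces to an equation in $\B(B_1,GFB_3)$ that follows from the $\V$-naturality of $\phi_{B_1,-}$ noted above (applied at $C=FB_2$, $C'=FB_3$) together with the $\V$-functoriality of $G$. Finally, with $F$ a $\V$-functor and $\eta$ $\V$-natural, the hypothesis that each $\phi_{BC}$ is an isomorphism is precisely the unit-characterization of a $\V$-adjunction, so $F\nsststile{}{\eta}G$ as required.

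The only genuinely computational step, and the one place I expect friction, is the preservation of composition by $F$: one must unwind the composition laws of both $\C$ and $\B$ and apply the $\V$-naturality square of $\phi_{B_1,-}$ in the codomain variable. The monomorphism cancellation afforded by $\phi$ being invertible is what keeps this in check, since it lets one discharge the equation entirely in $\B$, where every term is expressed through $G$, the composition of $\B$, and the units $\eta$. Everything else is formal, being a matter of substituting the definition of $F$ and the equation $\B(\eta_B,GC)\circ G_{FB,C}=\phi_{BC}$.
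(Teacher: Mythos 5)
Your proposal is correct, and its skeleton is the same as the paper's: establish that each $\phi_{B-}:\C(FB,-)\rightarrow\B(B,G-)$ is a $\V$-natural isomorphism, so that $FB$ represents $\B(B,G-)$, and then pass from pointwise representability to the $\V$-adjunction with unit $\eta$. The difference is entirely in what is cited versus what is done by hand. The paper obtains the $\V$-naturality of $\phi_{B-}$ from the weak Yoneda lemma (\cite{Ke:Ba}, 1.9): under the Yoneda bijection between morphisms $B\rightarrow GFB$ and $\V$-natural transformations $\C(FB,-)\rightarrow\B(B,G-)$, the morphism $\eta_B$ corresponds precisely to $\phi_{B-}$; it then concludes in one stroke by citing Proposition 0.2 of \cite{Dub}. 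You instead prove the naturality directly (as a composite of the $\V$-natural family $G_{FB,-}$ with the whiskered precomposition family $\B(\eta_B,G-)$), and then, rather than citing Dubuc's proposition, you reconstruct its proof: defining $F$ on homs by formula (6), checking the unit's naturality and the functor axioms by cancelling the monomorphisms $\phi_{B_1,FB_2}$, and finishing with the unit-characterization of a $\V$-adjunction. Your sketch of the composition axiom is the right argument (it needs, besides the naturality of $\phi_{B_1,-}$ and functoriality of $G$, also the associativity of composition in $\B$ and the already-established defining relation $\phi_{B_2,FB_3}\circ F_{B_2 B_3}=\B(B_2,\eta_{B_3})$), though you do not carry it out in full. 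What the paper's route buys is brevity, deferring exactly these verifications to the literature; what yours buys is a self-contained argument that makes explicit the formula for $F$ on homs that the statement itself refers to.
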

\begin{proof}
Fixing an object $B \in \B$, the weak Yoneda lemma (\cite{Ke:Ba}, 1.9) yields a bijection between morphisms $B \rightarrow GFB$ and $\V$-natural transformations $\C(FB,-) \rightarrow \B(B,G-)$, under which $\eta_B$ corresponds to $\phi_{B -} := (\phi_{B C})_{C \in \C}$.  In particular, $\phi_{B -}:\C(FB,-) \rightarrow \B(B,G-)$ is thus a $\V$-natural isomorphism, showing that $\B(B,G-):\C \rightarrow \uV$ is a representable $\V$-functor.  Since this holds for all $B \in \B$, the result follows from Proposition 0.2 of \cite{Dub}.
\end{proof}

\begin{ParSubSub}\label{par:refl_subcats_and_idm_mnds}
Let $\B$ be a $\V$-category.  A \textit{$\V$-reflective-subcategory} of $\B$ is a full replete sub-$\V$-category $\B'$ of $\B$ for which the inclusion $\V$-functor $J:\B' \hookrightarrow \B$ has a left $\V$-adjoint $K$.  Any such $\V$-adjunction $K \nsststile{}{\rho} J : \B' \hookrightarrow \B$ is called a \textit{$\V$-reflection} (on $\B$).  Given an idempotent $\V$-monad $\SSS = (S,\rho,\lambda)$ \pbref{par:idm_mnd} on $\B$, we let $\B^{(\SSS)}$ denote the $\V$-reflective-subcategory of $\B$ consisting of those objects $B$ for which $\rho_B$ is iso.
\end{ParSubSub}

\begin{PropSubSub} \label{thm:refl_idemp_mnd}
There is a bijection $\Refl_\V(\B) \cong \IdmMnd_{\VCAT}(\B)$ between the class $\Refl_\V(\B)$ of all $\V$-reflections on $\B$ and the class $\IdmMnd_{\VCAT}(\B)$ of all idempotent $\V$-monads on $\B$, which associates to each $\V$-reflection on $\B$ its induced $\V$-monad.  The $\V$-reflective-subcategory associated to a given idempotent $\V$-monad $\SSS$ via this bijection is $\B^{(\SSS)}$.
\end{PropSubSub}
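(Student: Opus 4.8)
The plan is to exhibit the two maps of the asserted bijection and verify that they are mutually inverse. Write $\Phi$ for the given map, sending a $\V$-reflection $K \dashv J$ to its induced $\V$-monad, and write $\Psi$ for the prospective inverse, sending an idempotent $\V$-monad $\SSS = (S,\rho,\lambda)$ to the $\V$-reflection onto $\B^{(\SSS)}$ \pbref{par:refl_subcats_and_idm_mnds}.

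First I would check that $\Phi$ lands among idempotent $\V$-monads. The $\V$-monad induced by a $\V$-reflection $K \dashv J$, with unit $\rho$ and counit $\epsilon$, is $(JK,\rho,J\epsilon K)$. Since $J$ is a full replete subcategory inclusion it is fully faithful as a $\V$-functor, so its underlying ordinary functor is fully faithful and hence the counit $\epsilon$ is an isomorphism (an inverse in the underlying category being automatically $\V$-natural). Therefore the multiplication $J\epsilon K$ is an isomorphism and the monad is idempotent.

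The construction of $\Psi$ is the substantive part. Given idempotent $\SSS = (S,\rho,\lambda)$, recall $\lambda^{-1} = S\rho = \rho S$ \pbref{par:idm_mnd}, so that $\rho_{SB} = (\rho S)_B = \lambda_B^{-1}$ is an isomorphism and hence $SB \in \B^{(\SSS)}$ for every $B$; moreover $\B^{(\SSS)}$ is full, replete (the condition ``$\rho_B$ iso'' being isomorphism-invariant) and a sub-$\V$-category. To produce the reflection I would invoke \bref{prop:adjn_via_radj_and_unit} with $G$ the inclusion $J:\B^{(\SSS)} \hookrightarrow \B$, assigning to each $B$ the object $SB \in \B^{(\SSS)}$ together with the candidate unit $\rho_B:B \rightarrow SB = J(SB)$. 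The hypothesis to verify is that for each $C \in \B^{(\SSS)}$ the morphism $\B(\rho_B,C):\B(SB,C) \rightarrow \B(B,C)$ is an isomorphism in $\V$; the inverse I propose is the composite $\B(SB,\rho_C^{-1}) \circ S_{B,C}:\B(B,C) \rightarrow \B(SB,SC) \rightarrow \B(SB,C)$, where $S_{B,C}$ is the action of $S$ on homs and $\rho_C^{-1}$ exists since $C \in \B^{(\SSS)}$. On underlying elements these are mutually inverse by the naturality equation $Sf \cdot \rho_B = \rho_C \cdot f$ together with $S\rho_B = \rho_{SB}$. This is the step I expect to be the main obstacle, precisely because in the enriched setting a bijection of underlying hom-sets is not enough: one must produce the inverse as an honest morphism of $\V$ and verify the two resulting triangle identities $\V$-naturally, which is where one genuinely uses the $\V$-naturality of $\rho$ and the identities $S\rho = \rho S = \lambda^{-1}$.

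It then remains to check the two round-trips. For $\Phi\Psi = \id$: the reflection $\Psi(\SSS)$ has $JK = S$ (on objects $JKB = SB$, and on arrows $K$ agrees with $S$ by the orthogonality just established) and unit $\rho$, while applying $J$ to the triangle identity $\epsilon K \cdot K\rho = 1$ gives $(J\epsilon K) \cdot S\rho = 1$, whence $J\epsilon K = (S\rho)^{-1} = \lambda$; thus $\Phi(\Psi(\SSS)) = (S,\rho,\lambda) = \SSS$. For $\Psi\Phi = \id$: starting from a reflection $K \dashv J$ with induced idempotent monad $\SSS$, I would first show $\B^{(\SSS)} = \B'$ as subcategories of $\B$: if $\rho_B$ is iso then $B \cong JKB \in \B'$ and repleteness gives $B \in \B'$, while for $B = JB'$ the triangle identity $J\epsilon \cdot \rho J = 1$ exhibits $J\epsilon_{B'}$ as a left inverse of $\rho_B$, and since $\epsilon$ is iso so is $\rho_B$. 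Finally the reflection produced by $\Psi(\SSS)$ has the same right adjoint $J$ and the same unit $\rho$ as the original, so the two coincide, the left adjoint and its action on arrows being uniquely determined by the unit (compare \bref{thm:uniq_adj}); this completes the proof.
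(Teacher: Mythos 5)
The paper itself gives no proof of \bref{thm:refl_idemp_mnd}: it is recalled in the preliminaries as the standard enriched correspondence between reflections and idempotent monads, so there is no argument of the paper to compare yours against; what matters is whether your proof stands on its own, and it does. Your construction of $\Psi$ is routed correctly through \bref{prop:adjn_via_radj_and_unit}: taking $G$ to be the full inclusion $J:\B^{(\SSS)} \hookrightarrow \B$, the hom-map $J_{SB,C}$ is an identity, so the composite in that proposition reduces to $\B(\rho_B,C)$, and the \emph{only} hypothesis to check is that this is invertible in $\V$ --- in particular, contrary to your phrasing, no triangle identities remain to be verified once that proposition is invoked. Your candidate inverse $\B(SB,\rho_C^{-1})\circ S_{B,C}$ is the right one, and the elementwise verification you give does upgrade to the required equality of morphisms in $\V$:
$$\B(\rho_B,C)\circ\B(SB,\rho_C^{-1})\circ S_{B,C} \;=\; \B(B,\rho_C^{-1})\circ\B(\rho_B,SC)\circ S_{B,C} \;=\; \B(B,\rho_C^{-1})\circ\B(B,\rho_C) \;=\; 1$$
by bifunctoriality of the hom and $\V$-naturality of $\rho$, and similarly the other composite is the identity using the composition axiom for the $\V$-functor $S$ together with $S\rho = \rho S = \lambda^{-1}$ \pbref{par:idm_mnd}; checking only on underlying (global) elements would not suffice for general $\V$ (the unit object need not be a generator), but you flag this yourself and the diagrammatic computation is the same. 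The round trips are also sound: $JK = S$ on homs because $\V$-naturality of $\rho$ gives $S_{B,B'} = \B(\rho_B,SB')^{-1}\circ\B(B,\rho_{B'})$, which is exactly the hom-action of the left adjoint produced by \bref{prop:adjn_via_radj_and_unit}; and your closing observation --- that coincidence of the right adjoint, the unit, and the object-assignment of the left adjoint forces \emph{equality} of adjunctions, not the mere isomorphism of \bref{thm:uniq_adj} --- is precisely what is needed for a bijection of classes rather than an equivalence.
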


\subsection{Enriched factorization systems}\label{sec:enr_factn_sys}

Given morphisms $e:A_1 \rightarrow A_2$, $m:B_1 \rightarrow B_2$ in a $\V$-category $\B$, we say that $e$ is \textit{$\V$-orthogonal} to $m$, written $e \downarrow_\V m$, if the commutative square
\begin{equation}\label{eqn:orth_pb}
\xymatrix {
\B(A_2,B_1) \ar[rr]^{\B(A_2,m)} \ar[d]_{\B(e,B_1)}  & & \B(A_2,B_2) \ar[d]^{\B(e,B_2)} \\
\B(A_1,B_1)  \ar[rr]^{\B(A_1,m)}                     & & \B(A_1,B_2)
}
\end{equation}
is a pullback in $\V$.  Given classes $\E$, $\M$ of morphisms in $\B$, we define
associated classes of morphisms as follows:
$$\E^{\downarrow_\V} := \{m\;|\; \forall e \in \E \;:\; e \downarrow_\V m\},\;\;\;\;\M^{\uparrow_\V} := \{e \;|\; \forall m \in \M \;:\; e \downarrow_\V m\}.$$
The pair $(\E,\M)$ is called a \textit{$\V$-prefactorization-system} on $\B$ if condition 1 below holds, and $(\E,\M)$ is called a \textit{$\V$-factorization system} if conditions 1 and 2 both hold:
\begin{enumerate}
\item $\E^{\downarrow_\V} = \M$ and $\M^{\uparrow_\V} = \E$.
\item Each morphism in $\B$ factors as a morphism in $\E$ followed by a morphism in $\M$.
\end{enumerate}
For ordinary categories, with $\V = \SET$ one obtains the familiar notion of \textit{factorization system}; in this case, we drop the indication of $\V$ from the notation.  The relation of $\V$-factorization-systems to ordinary factorization systems is elaborated in \cite{Lu:EnrFactnSys}, where theorems on the existence of $\V$-factorization-systems are proved as well.

\begin{ParSubSub}\label{par:stab_props_prefactn}
Given a $\V$-prefactorization-system $(\E,\M)$ on $\B$, the following stability properties of $\E$ and $\M$ are established in \cite[4.4]{Lu:EnrFactnSys}.  The class $\M$ is closed under composition, cotensors, arbitrary $\V$-fibre-products, and $\V$-pullbacks along arbitrary morphisms in $\B$.  Further, if $g \cdot f \in \M$ and $g \in \M$, then $f \in \M$.  Also, if $\E = \sH^{\uparrow_\V}$ for some class of $\V$-epimorphisms $\sH$, then whenever $g \cdot f \in \M$, it follows that $f \in \M$.  Since $(\M,\E)$ is a $\V$-prefactorization-system in $\B^\op$, one obtains stability properties for $\E$ that are exactly dual to the above properties of $\M$.  Analogous stability properties hold for a prefactorization system on an ordinary category $\B$, even when $\B$ is not locally small; cf. \cite[2.1.1]{FrKe}.
\end{ParSubSub}

\begin{ParSubSub}\label{par:m_subobj_compl}
Given a $\V$-prefactorization-system $(\E,\M)$ on $\B$ with $\M$ a class of $\V$-monos in $\B$, we say that $\B$ is \textit{$\M$-subobject-complete (as a $\V$-category)} if $\B$ is cotensored and has $\V$-intersections \pbref{par:vlimits} of arbitrary (class-indexed) families of $\M$-morphisms, as well as $\V$-pullbacks of $\M$-morphisms along arbitrary morphisms.  By \cite[7.4]{Lu:EnrFactnSys}, if $\B$ is $\M$-subobject-complete, then the following hold:
\begin{enumerate}
\item $(\E,\M)$ is a $\V$-factorization-system on $\B$.
\item For any class $\Sigma$ of morphisms in $\B$, if we let $\N := \Sigma^{\downarrow_\V} \cap \M$, then $(\N^{\uparrow_\V},\N)$ is a $\V$-factorization-system on $\B$.
\end{enumerate}
\end{ParSubSub}

\begin{ParSubSub}\label{par:vproper_prefactn}
A $\V$-prefactorization-system $(\E,\M)$ on $\B$ is said to be \textit{$\V$-proper} if every morphism in $\E$ is a $\V$-epimorphism in $\B$ and every morphism in $\M$ is a $\V$-monomorphism.  A \textit{$\V$-strong-mono(morphism)} in $\B$ is a $\V$-mono to which each $\V$-epi in $\B$ is $\V$-orthogonal, and a \textit{$\V$-strong-epi(morphism)} in $\B$ is a $\V$-strong-mono in $\B^\op$.  We denote the classes of all such by $\StrMono_\V\B$ and $\StrEpi_\V\B$, respectively.  In a tensored and cotensored $\V$-category, these notions reduce (by \cite[6.8]{Lu:EnrFactnSys}) to the familiar notions of strong monomorphism (resp. strong epimorphism), applied to the underlying ordinary category of $\B$.  For any $\V$-proper $\V$-prefactorization-system $(\E,\M)$, we have $\StrMono_\V\B \subseteq \E^{\downarrow_\V} = \M$ and similarly $\StrEpi_\V\B \subseteq \E$; hence, since every section is a $\V$-strong-mono (\cite[6.3]{Lu:EnrFactnSys}), every section therefore lies in $\M$, and dually, every retraction lies in $\E$. 
\end{ParSubSub}

\begin{PropSubSub}
If either of the following conditions holds, then $(\Epi_\V\B,\StrMono_\V\B)$ is a $\V$-factorization-system on $\B$ and $\B$ is $\M$-subobject-complete for $\M = \StrMono_\V\B$.
\begin{enumerate}
\item $\B$ is cotensored, well-powered with respect to $\V$-strong-monos, and has small $\V$-limits and $\V$-cokernel-pairs.
\item $\B$ is cotensored and tensored, well-powered with respect to strong monos, and has small limits.
\end{enumerate}
If either of the following conditions holds, then $(\StrEpi_\V\B,\Mono_\V\B)$ is a $\V$-factorization-system and $\B$ is $\M$-subobject-complete for $\M = \Mono_\V\B$.
\begin{enumerate}
\item[3.] $\B$ is cotensored, well-powered with respect to $\V$-monos, and has small $\V$-limits and $\V$-cokernel pairs.
\item[4.] $\B$ is cotensored and tensored, well-powered, and has small limits.
\end{enumerate}
\end{PropSubSub}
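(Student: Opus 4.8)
The plan is to deduce all four cases from \bref{par:m_subobj_compl}: for each, I would first check that the displayed pair is a $\V$-prefactorization-system with $\M$ a class of $\V$-monos, then verify that $\B$ is $\M$-subobject-complete, at which point part~1 of \bref{par:m_subobj_compl} (that is, \cite[7.4]{Lu:EnrFactnSys}) yields the asserted $\V$-factorization-system, and the $\M$-subobject-completeness is exactly the second assertion. I would carry out the first statement, on $(\Epi_\V\B,\StrMono_\V\B)$, in full, the second statement on $(\StrEpi_\V\B,\Mono_\V\B)$ being entirely parallel, with the roles of $\V$-kernel-pairs and $\V$-cokernel-pairs, and of monos and strong monos, interchanged.

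To see that $(\Epi_\V\B,\StrMono_\V\B)$ is a $\V$-prefactorization-system I must show $(\Epi_\V\B)^{\downarrow_\V}=\StrMono_\V\B$ and $(\StrMono_\V\B)^{\uparrow_\V}=\Epi_\V\B$, in each of which one inclusion is immediate from the definition of $\V$-strong-mono \pbref{par:vproper_prefactn}: every $\V$-epi is $\V$-orthogonal to every $\V$-strong-mono. For the reverse inclusion $(\Epi_\V\B)^{\downarrow_\V}\subseteq\StrMono_\V\B$, given $m:B_1\to B_2$ $\V$-orthogonal to every $\V$-epi I would form its $\V$-kernel-pair $p_1,p_2:K\rightrightarrows B_1$ (a $\V$-pullback, available by hypothesis); since the diagonal splits each $p_i$, each $p_i$ is a retraction and hence a $\V$-epi, so $\V$-orthogonality of $m$ against $p_2$ supplies $d$ with $dp_2=p_1$ and $md=m$, and precomposing with the diagonal forces $d=\id$ and thus $p_1=p_2$; the $\V$-kernel-pair is therefore trivial, $m$ is a $\V$-mono, and so $m\in\StrMono_\V\B$. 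For $(\StrMono_\V\B)^{\uparrow_\V}\subseteq\Epi_\V\B$, given $e:A_1\to A_2$ $\V$-orthogonal to every $\V$-strong-mono I would form its $\V$-cokernel-pair $i_1,i_2:A_2\rightrightarrows P$ together with the $\V$-equalizer $w$ of $i_1,i_2$; as a regular, hence $\V$-strong, mono, $w$ receives a factorization of $e$, so $\V$-orthogonality of $e$ against $w$ renders $w$ invertible and forces $i_1=i_2$. Because the $\V$-cokernel-pair is a $\V$-colimit, a morphism is a $\V$-epi exactly when its $\V$-cokernel-pair is trivial, and hence $e\in\Epi_\V\B$; this is the one place where the $\V$-cokernel-pairs of conditions 1 and 3 are indispensable.

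I would then check $\M$-subobject-completeness for $\M=\StrMono_\V\B$ in the sense of \bref{par:m_subobj_compl}. Cotensoredness is assumed; well-poweredness with respect to $\V$-strong-monos reduces the $\V$-intersections of arbitrary class-indexed families of $\M$-morphisms to those of small families, and these, with the $\V$-pullbacks of $\M$-morphisms along arbitrary morphisms, are supplied by the small $\V$-limits of condition~1. Under condition~2 I would argue instead in the underlying ordinary category: since $\B$ is tensored and cotensored, the classes of $\V$-monos, $\V$-epis, and $\V$-strong-monos coincide with their ordinary counterparts (cf.\ \bref{par:vproper_prefactn}) and the assumed small ordinary limits are $\V$-limits, so the preceding arguments apply verbatim there---and the step ``$\V$-orthogonal to strong monos $\Rightarrow$ epi'' may now be run with $\V$-equalizers alone, the equalizer of a parallel pair being a regular, hence strong, mono that the orthogonality collapses, so that no $\V$-cokernel-pairs are required---after which the conclusion is transported back along the tensoring and cotensoring.

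The crux is the identification $(\StrMono_\V\B)^{\uparrow_\V}=\Epi_\V\B$ as an \emph{enriched} statement, i.e.\ that $\E$ is the class of all $\V$-epimorphisms rather than merely of epimorphisms of the underlying category. This is exactly what splits the hypotheses into two regimes. When tensoring is not assumed (conditions 1 and 3) there is no descent to the underlying category, and the enriched nature of $\V$-epimorphisms must be confronted directly; the $\V$-cokernel-pairs serve precisely to recast ``$\V$-epi'' as ``trivial $\V$-cokernel-pair'', the form in which the orthogonality argument delivers it. When tensoring is assumed (conditions 2 and 4) the enriched and ordinary notions coincide, the difficulty disappears, and accordingly only small ordinary limits are demanded. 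Once the prefactorization systems are secured, the rest is the bookkeeping of \bref{par:m_subobj_compl}, and all four cases follow.
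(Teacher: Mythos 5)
Your proposal is correct in substance, but it takes a genuinely different route from the paper. The paper's own proof is essentially a citation: in each of the four cases, the assertion that the pair in question is a $\V$-factorization-system is quoted directly as part of Theorem 7.14 of \cite{Lu:EnrFactnSys}, and $\M$-subobject-completeness is noted as clear in cases 1 and 3 and deduced from \cite[2.4, 6.8]{Lu:EnrFactnSys} in cases 2 and 4. You instead reconstruct the result from lower-level ingredients: you prove the two prefactorization identities by hand (split-epi kernel-pair projections for $(\Epi_\V\B)^{\downarrow_\V}\subseteq\StrMono_\V\B$; cokernel pairs plus equalizers for $(\StrMono_\V\B)^{\uparrow_\V}\subseteq\Epi_\V\B$), verify $\M$-subobject-completeness, and only then invoke \bref{par:m_subobj_compl} to obtain the factorizations. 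This costs more work but is more illuminating: it isolates exactly where each hypothesis enters --- the $\V$-cokernel-pairs of conditions 1 and 3 serve precisely to detect $\V$-epimorphisms, while under conditions 2 and 4 the tensoring collapses the enriched notions onto the ordinary ones (cf.\ \bref{par:vproper_prefactn}), where equalizers alone suffice --- and this matches the hypothesis pattern of the statement. In effect you have re-proved the relevant portion of the cited Theorem 7.14 rather than cited it.

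Two steps in your argument are asserted rather than proved and should be flagged, though both are true and lemma-level. First, that a $\V$-equalizer is a $\V$-strong-mono: the verification needs only that $\V$-epimorphisms induce monomorphisms on hom-objects, so that in the relevant pullback test the two induced maps into $\B(A_2,C)$ agree after composition with a monomorphism $\B(e,C)$. Second, the enriched detection principles you use implicitly: if the $\V$-kernel-pair of $m$ has equal projections then $m$ is a $\V$-mono, and dually if the $\V$-cokernel-pair of $e$ has equal injections then $e$ is a $\V$-epi; these follow because a $\V$-(co)kernel-pair induces, on each hom-object, a kernel pair of $\B(A,m)$ (resp.\ of $\B(e,B)$) in $\V$, and an ordinary morphism with a trivial kernel pair is monic. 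Similarly, the final ``transport back'' in cases 2 and 4 deserves one more sentence: the inclusions $(\Epi_\V\B)^{\downarrow_\V}\subseteq(\Epi\B)^{\downarrow}$ and $(\StrMono_\V\B)^{\uparrow_\V}\subseteq(\StrMono\B)^{\uparrow}$ hold because enriched orthogonality implies ordinary orthogonality, while the reverse containments needed for the prefactorization identities come for free from the definitions of $\V$-strong-mono and $\V$-strong-epi. With these points supplied, your plan goes through in all four cases.
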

\begin{proof}
In each case, the statement that the pair $(\E,\M)$ in question is a $\V$-factorization-system under the given condition is part of Theorem 7.14 of \cite{Lu:EnrFactnSys}.  In cases 1 and 3, it is clear that $\B$ is $\M$-subobject-complete.  In cases 2 and 4, we deduce that $\B$ is $\M$-subobject-complete by \cite[2.4, 6.8]{Lu:EnrFactnSys}.
\end{proof}

\subsection{Closure operators in categories}\label{sec:cl_ops}

Let $(\E,\M)$ be a prefactorization system on a category $\B$ with $\M \subseteq \Mono\B$.  For each object $B$ of $\B$, denote by $\Sub_\M(B)$ the preordered class of all $\M$-morphisms with codomain $B$.  Suppose that for each $f:A \rightarrow B$ in $\B$ and $m \in \Sub_\M(B)$, the pullback $f^{-1}(m)$ of $m$ along $f$ exists; by \bref{par:stab_props_prefactn}, $f^{-1}(m)$ then lies in $\Sub_\M(A)$.  Under these assumptions, we shall recall some basic results concerning the notion of \textit{idempotent closure operator} defined in \cite{DiGi:Cl} and, in more general settings, in \cite{ThDi:ClOps,Th:ClOpsMInt}.

\begin{ParSub}\label{par:idm_cl_op}
An \textit{idempotent closure operator} on $\M$ in $\B$ is an assignment to each object $B$ of $\B$ a monotone map $\overline{(-)}:\Sub_\M(B) \rightarrow \Sub_\M(B)$ such that
\begin{enumerate}
\item $m \leq \overline{m}$ and $\overline{\overline{m}} \leq \overline{m}$ for each $m \in \M$, and
\item for each $f:A \rightarrow B$ in $\B$ and each $n \in \Sub_\M(B)$, $\overline{f^{-1}(n)} \leq f^{-1}(\overline{n})$.
\end{enumerate}
An $\M$-morphism $m \in \Sub_\M(B)$ is said to be \textit{closed} with respect to $\overline{(-)}$ if $\overline{m} \cong m$ in $\Sub_\M(B)$, whereas $m$ is said to be \textit{dense} if $\overline{m} \cong 1_B$.  By 1, each $\M$-morphism $m:M \rightarrow B$ factors as $M \xrightarrow{d_m} \overline{M} \xrightarrow{\overline{m}} B$ for a unique morphism $d_m$, and by \bref{par:stab_props_prefactn}, $d_m \in \M$.  We say that $\overline{(-)}$ is \textit{weakly hereditary} if for each $m \in \M$, $d_m$ is dense.
\end{ParSub}

\begin{ParSub}\label{par:factn_sys_from_cl_op}
Suppose that $(\E,\M)$-factorizations exist.  For all morphisms $f:A \rightarrow B$ and $m:M \rightarrow A$ with $m \in \M$, denote by $f(m)$ the second factor of the $(\E,\M)$-factorization of the composite $M \xrightarrow{m} A \xrightarrow{f} B$.  Then condition 2 above is equivalent to the following condition:
\begin{enumerate}
\item[$\text{2}'$.] $f(\overline{m}) \leq \overline{f(m)}$.
\end{enumerate}
Given a weakly hereditary idempotent closure operator $\overline{(-)}$ on $\M$ in $\B$, we obtain an associated factorization system $(\Dense,\ClEmb)$ on $\B$, where $\ClEmb$ is the class of all closed $\M$-morphisms and $\Dense$ is the class of all \textit{dense} morphisms, i.e. those $f:A \rightarrow B$ in $\B$ whose \textit{image} $f(1_A)$ is a dense $\M$-morphism.
\end{ParSub}

\begin{ParSub}\label{par:cl_op_assoc_to_factn_sys}
Given a factorization system $\F = (\D,\C)$ with $\C \subseteq \M$, we obtain a weakly hereditary idempotent closure operator $\overline{(-)}^\F$ on $\M$ in $\B$ by defining the closure $\overline{m}^\F$ of each $m \in \M$ to be the second factor of the $(\D,\C)$-factorization of $m$.  Supposing as in \bref{par:factn_sys_from_cl_op} that $(\E,\M)$-factorizations exist, the class of closed $\M$-morphisms (resp. dense morphisms) determined by $\overline{(-)}^\F$ is then equal to $\C$ (resp. $\D$).  In the case that $\F = (\D,\C)$ is the factorization system associated to a given closure operator $\overline{(-)}$ on $\M$ \pbref{par:factn_sys_from_cl_op}, we find that $\overline{m}^\F \cong \overline{m}$ in $\Sub_\M(B)$ for all $m \in \Sub_\M(B)$, $B \in \B$.
\end{ParSub}

\begin{PropSub}\label{thm:clemb_via_densemb}
Let $(\Dense,\ClEmb)$ be the factorization system determined by a weakly hereditary idempotent closure operator $\overline{(-)}$ on $\M$ in $\B$.  Then 
$$\ClEmb = \DenseEmb^\downarrow \cap \M\;,$$
where $\DenseEmb := \M \cap \Dense$.
\end{PropSub}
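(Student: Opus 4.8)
The plan is to establish the two inclusions separately, the containment $\ClEmb \subseteq \DenseEmb^\downarrow \cap \M$ being straightforward and the reverse requiring the real work. For the forward inclusion, recall from \bref{par:factn_sys_from_cl_op} that $(\Dense,\ClEmb)$ is a factorization system, hence in particular a prefactorization system, so that $\Dense^\downarrow = \ClEmb$. Since $\DenseEmb = \M \cap \Dense \subseteq \Dense$, enlarging the left-hand class can only shrink the orthogonal class, giving $\ClEmb = \Dense^\downarrow \subseteq \DenseEmb^\downarrow$; as $\ClEmb \subseteq \M$ by definition, I conclude $\ClEmb \subseteq \DenseEmb^\downarrow \cap \M$.

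For the reverse inclusion, I would take an arbitrary $m \in \DenseEmb^\downarrow \cap \M$, say $m:M \rightarrow B$, and aim to show it is closed, i.e. that $\overline{m} \cong m$ in $\Sub_\M(B)$. The key observation is that the closure operator itself supplies a dense embedding to test $m$ against: by \bref{par:idm_cl_op}, $m$ factors as $M \xrightarrow{d_m} \overline{M} \xrightarrow{\overline{m}} B$ with $d_m \in \M$, and since $\overline{(-)}$ is weakly hereditary, $d_m$ is dense; thus $d_m \in \M \cap \Dense = \DenseEmb$. Because $m \in \DenseEmb^\downarrow$, we have $d_m \downarrow m$, so I would apply this orthogonality to the commutative square with top edge $1_M$, left leg $d_m$, bottom edge $\overline{m}$, and right leg $m$, which commutes precisely because $m = \overline{m} \cdot d_m$.

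The diagonal filler $t:\overline{M} \rightarrow M$ produced by $d_m \downarrow m$ satisfies $t \cdot d_m = 1_M$ and $m \cdot t = \overline{m}$. It then remains to upgrade this retraction into an inverse: from $\overline{m} \cdot (d_m \cdot t) = m \cdot t = \overline{m} = \overline{m} \cdot 1_{\overline{M}}$ and the fact that $\overline{m} \in \M \subseteq \Mono\B$ is a monomorphism, one cancels $\overline{m}$ to obtain $d_m \cdot t = 1_{\overline{M}}$. Hence $d_m$ is an isomorphism, witnessing $m \cong \overline{m}$ in $\Sub_\M(B)$, so $m$ is closed and $m \in \ClEmb$. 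I expect the one genuinely non-routine idea to be the recognition that the dense part $d_m$ of $m$'s own closure-factorization is exactly the morphism against which orthogonality should be tested; once that square is in place, splitting $d_m$ and cancelling the monomorphism $\overline{m}$ are mere bookkeeping with the factorization $m = \overline{m} \cdot d_m$.
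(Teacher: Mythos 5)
Your proof is correct and takes essentially the same approach as the paper's: both test $m$ against the dense $\M$-embedding arising from its own factorization and use the resulting diagonal filler to conclude that $m$ is isomorphic to a closed subobject. The only cosmetic difference is that you phrase the factorization as $m = \overline{m} \cdot d_m$ via the closure operator (getting $d_m \in \DenseEmb$ from weak hereditariness) and upgrade the filler to an inverse of $d_m$ by cancelling the monomorphism $\overline{m}$, whereas the paper takes the $(\Dense,\ClEmb)$-factorization of $m$, puts its first factor in $\M$ by the cancellation property of \bref{par:stab_props_prefactn}, and concludes with a two-way subobject comparison --- the same argument, since by \bref{par:cl_op_assoc_to_factn_sys} these two factorizations coincide.
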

\begin{proof}
Since $(\Dense,\ClEmb)$ is a prefactorization system and $\DenseEmb \subseteq \Dense$ we know that $\ClEmb = \Dense^\downarrow \subseteq \DenseEmb^\downarrow$ and hence $\ClEmb \subseteq \DenseEmb^\downarrow \cap \M$.  For the converse inclusion, suppose that $m:M \rightarrow B$ lies in $\DenseEmb^\downarrow \cap \M$.  Then, taking the $(\Dense,\ClEmb)$-factorization $M \xrightarrow{d} C \xrightarrow{c} B$ of $m$, we find by \bref{par:stab_props_prefactn} that $d$ lies in $\M$ and hence lies in $\DenseEmb$, so $d \downarrow m$.  Therefore, there is a unique morphism $k$ such that the diagram
$$
\xymatrix{
M \ar@{=}[r] \ar[d]_d  & M \ar[d]^m \\
C \ar[r]_c \ar@{-->}[ur]|k & B
}
$$
commutes, whence $c \cong m$ in $\Sub_\M(B)$, so that $m$ is closed as $c$ is so.
\end{proof}

\section{Orthogonality, adjunctions, and reflections} \label{sec:orth_pres_adj}

The following notion of orthogonality in the enriched context was employed in \cite{Day:AdjFactn}.

\begin{DefSub} \label{def:orth_subcat_sigma}
Let $\B$ be a $\V$-category, let $\Sigma$ be a class of morphisms in $\B$, and let $\C$ be a class of objects in $\B$.
\begin{enumerate}
\item For a morphism $f:A_1 \rightarrow A_2$ in $\B$ and an object $B$ in $\B$, we say that $f$ is \textit{$\V$-orthogonal} to $B$, written $f \bot_\V B$, if \mbox{$\B(f,B):\B(A_2,B) \rightarrow \B(A_1,B)$} is an isomorphism in $\V$.
\item We define $\Sigma^{\bot_\V} := \{C \in \ob\B \:|\; \forall f \in \Sigma \::\: f \bot_\V C\}$.  We let $\B_\Sigma$ be the full sub-$\V$-category of $\B$  whose objects are those in $\Sigma^{\bot_\V}$.
\item We define $\C^{\top_\V} := \{f \in \mor\B \:|\: \forall C \in \C \::\: f \bot_\V C\}$.
\item We say that $(\Sigma,\C)$ is a \textit{$\V$-orthogonal-pair} in $\B$ if $\Sigma^{\bot_\V} = \C$ and $\C^{\top_\V} = \Sigma$.
\item Given a functor $F:\B \rightarrow \C$, we denote by $\Sigma_F$ the class of all morphisms in $\B$ inverted by $F$ (i.e. sent to isomorphisms in $\C$).
\end{enumerate}
\end{DefSub}

\begin{RemSub}
For an ordinary category $\B$, with $\V = \SET$ we obtain the familiar notions of orthogonality \cite{FrKe} and orthogonal pair \cite{CasPesPf}, for which we omit the indication of $\V$ and employ the unadorned symbols $\bot$, $\top$.  Enriched orthogonality clearly implies ordinary orthogonality.
\end{RemSub}

\begin{RemSub}
For any class of morphisms $\Sigma$ in $\B$, $(\Sigma^{\bot_\V\top_\V},\Sigma^{\bot_\V})$ is a $\V$-orthogonal-pair in $\B$.  For any class of objects $\C$ in $\B$, $(\C^{\top_\V},\C^{\top_\V\bot_\V})$ is a $\V$-orthogonal-pair in $\B$.
\end{RemSub}

\begin{RemSub}
If $\B$ has a $\V$-terminal object $1$, then it is easy to show that $f \bot_\V B$ iff $f \downarrow_\V !_B$, where $!_B:B \rightarrow 1$.
\end{RemSub}

\begin{PropSub}\label{thm:clos_props_orth_pairs}
Let $(\Sigma,\C)$ be a $\V$-orthogonal-pair in a $\V$-category $\B$.  Then
\begin{enumerate}
\item $\Sigma$ is closed under tensors in $\B$.  I.e., if $h:A_1 \rightarrow A_2$ in lies in $\Sigma$ and $V$ is an object of $\V$ for which tensors $V \otimes A_1$, $V \otimes A_2$ exist in $\B$, then the induced morphism $V \otimes h:V \otimes A_1 \rightarrow V \otimes A_2$ lies in $\Sigma$.
\item $\C$ is closed under $\V$-enriched weighted limits in $\B$.  I.e., given $\V$-functors $C:\J \rightarrow \B$ and $W:\J \rightarrow \uV$ for which a weighted limit $[W,C]$ in $\B$ exists, if $Cj \in \C$ for all $j \in \J$, then $[W,C] \in \C$.
\end{enumerate}
\end{PropSub}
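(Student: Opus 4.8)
The plan is to prove both parts by a single strategy: to check the required orthogonality, I pass the relevant hom-morphism through a representing isomorphism for the tensor (resp.\ the weighted limit), thereby identifying it with the image, under a functor that preserves isomorphisms, of a morphism already known to be invertible by the orthogonality hypothesis defining the pair $(\Sigma, \C)$. The two functors involved are the internal-hom functor $\uV(V, -)$ in part 1 and the weighted-limit functor $[W, -]$ in part 2.

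For part 1, since $\Sigma = \C^{\top_\V}$ it suffices to show that $(V \otimes h) \bot_\V C$ for every $C \in \C$. The defining universal property of the tensor supplies an isomorphism $\B(V \otimes A, B) \cong \uV(V, \B(A, B))$ that is $\V$-natural in both $A$ and $B$. Taking $B = C$ and using naturality in $A$, the morphism $\B(V \otimes h, C)$ is carried to $\uV(V, \B(h, C))$. Now $h \in \Sigma = \C^{\top_\V}$ and $C \in \C$ give $h \bot_\V C$, i.e.\ $\B(h, C)$ is invertible in $\V$; applying the functor $\uV(V, -)$ shows $\uV(V, \B(h, C))$ is invertible, hence so is $\B(V \otimes h, C)$, and therefore $(V \otimes h) \bot_\V C$. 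As $C \in \C$ was arbitrary, $V \otimes h \in \C^{\top_\V} = \Sigma$.

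For part 2, dually, since $\C = \Sigma^{\bot_\V}$ it suffices to show $f \bot_\V [W, C]$ for every $f \in \Sigma$. The defining property of the weighted limit supplies an isomorphism $\B(A, [W, C]) \cong [W, \B(A, C-)]$ that is $\V$-natural in $A$, under which $\B(f, [W, C])$ corresponds to the morphism $[W, \B(f, C-)]$ induced on weighted limits by the $\V$-natural transformation $\B(f, C-) : \B(A_2, C-) \rightarrow \B(A_1, C-)$. For each $j \in \J$ the component $\B(f, Cj)$ is invertible, because $f \in \Sigma = \C^{\top_\V}$ and $Cj \in \C$; hence $\B(f, C-)$ is a $\V$-natural isomorphism, and the weighted-limit functor $[W, -]$ carries it to an isomorphism $\B(f, [W, C])$. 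Thus $f \bot_\V [W, C]$, and since $f \in \Sigma$ was arbitrary, $[W, C] \in \Sigma^{\bot_\V} = \C$.

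The routine verifications are the naturality of the representing isomorphisms in the variable along which $h$ (resp.\ $f$) acts, which is what guarantees that $\B(V \otimes h, C)$ and $\B(f, [W, C])$ correspond to $\uV(V, \B(h, C))$ and $[W, \B(f, C-)]$ as claimed. The one conceptual input worth isolating is that a pointwise $\V$-natural isomorphism of $\V$-functors $\J \rightarrow \uV$ induces an isomorphism on weighted limits; this holds because $[W, -]$ is functorial in its diagram, the inverse being induced by the pointwise inverse transformation. This is the step I would expect to state most carefully, though it presents no real difficulty.
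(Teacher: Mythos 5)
Your proof is correct and follows essentially the same route as the paper: both parts reduce the claimed orthogonality to the isomorphisms $\B(V \otimes h,C) \cong \uV(V,\B(h,C))$ and $\B(f,[W,C]) \cong [\J,\uV](W,\B(f,C-))$ given by the universal properties, then use that $\uV(V,-)$ and the weighted-limit construction preserve (pointwise) isomorphisms. The paper phrases these identifications as isomorphisms in the arrow category of $\V$ and writes the second one as an end, but the content is identical to your argument.
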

\begin{proof}
1. For each object $C \in \C$, $\B(V \otimes h,C) \cong \uV(V,\B(h,C))$ in the arrow category of $\V$, so since $\B(h,C)$ is iso, $\B(V \otimes h,C)$ is iso, showing that $V \otimes h \in \C^{\top_\V} = \Sigma$.  2.  For each $h \in \Sigma$, we have an isomorphism
$$\B(h,[W,C]) \cong [\J,\uV](W,\B(h,C-)) = \int_{j \in \J}\uV(Wj,\B(h,C_j))$$
in the arrow category of $\V$, but each $\B(h,C_j)$ is iso and hence $\B(h,[W,C])$ is iso, showing that $[W,C] \in \Sigma^{\bot_\V} = \C$. 
\end{proof}

\begin{PropSub}\label{thm:case_where_ord_orth_impl_enr}
Let $\B$ be a $\V$-category, $\Sigma \subseteq \Mor\B$, $\C \subseteq \ob\B$.
\begin{enumerate}
\item If $\B$ is tensored and $\Sigma$ is closed under tensors in $\B$, then $\Sigma^{\bot_\V} = \Sigma^{\bot}$.
\item If $\B$ is cotensored and $\C$ is closed under cotensors in $\B$, then $\C^{\top_\V} = \C^{\top}$.
\end{enumerate}
\end{PropSub}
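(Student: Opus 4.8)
The plan is to prove each identity by establishing the one nontrivial inclusion, the reverse being automatic: by the Remark immediately preceding, $\V$-orthogonality implies ordinary orthogonality, so that $\Sigma^{\bot_\V} \subseteq \Sigma^{\bot}$ and $\C^{\top_\V} \subseteq \C^{\top}$ hold with no hypotheses whatsoever. Thus for part 1 it suffices to show $\Sigma^{\bot} \subseteq \Sigma^{\bot_\V}$, and for part 2 that $\C^{\top} \subseteq \C^{\top_\V}$. Both will rest on a single observation: a morphism $\phi:W_1 \rightarrow W_2$ in $\V$ is an isomorphism as soon as $\V(V,\phi):\V(V,W_1) \rightarrow \V(V,W_2)$ is a bijection for every object $V$ of $\V$; this is an instance of the conservativity of the Yoneda embedding and is available since $\V$ has hom-classes in $\SET$.

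For part 1, fix $B \in \Sigma^{\bot}$ and a morphism $f:A_1 \rightarrow A_2$ in $\Sigma$; I must show that $\B(f,B):\B(A_2,B) \rightarrow \B(A_1,B)$ is an isomorphism in $\V$. By the observation above it is enough to verify that $\V(V,\B(f,B))$ is a bijection for each $V \in \V$. Here I would invoke the defining adjunction of the tensor, which furnishes a bijection between $\V(V,\B(A,B))$ and the set of morphisms $V \otimes A \rightarrow B$ in the underlying ordinary category of $\B$, natural in $A$. Under this identification $\V(V,\B(f,B))$ becomes precomposition with $V \otimes f$, that is, the map $\B(V \otimes f,B)$ induced on underlying ordinary hom-sets.

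The hypotheses now close the argument. Since $\B$ is tensored, the tensor $V \otimes f$ exists, and since $\Sigma$ is closed under tensors, $V \otimes f$ again lies in $\Sigma$; as $B \in \Sigma^{\bot}$ we get $(V \otimes f) \bot B$ in the ordinary sense, i.e. $\B(V \otimes f,B)$ is a bijection. Transporting back across the tensor adjunction shows that $\V(V,\B(f,B))$ is a bijection for every $V$, whence $\B(f,B)$ is a $\V$-isomorphism and $f \bot_\V B$. As $f \in \Sigma$ was arbitrary, $B \in \Sigma^{\bot_\V}$.

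Part 2 is formally dual, carried out through the cotensor in place of the tensor. Fixing $f \in \C^{\top}$ and $C \in \C$, I again reduce the claim that $\B(f,C)$ is a $\V$-isomorphism to the assertion that $\V(V,\B(f,C))$ is a bijection for every $V$, and then use the defining adjunction of the cotensor $C^V$, namely the $\V$-natural isomorphism $\B(A,C^V) \cong \uV(V,\B(A,C))$, to identify $\V(V,\B(A,C))$ naturally in $A$ with the set of morphisms $A \rightarrow C^V$ in the underlying ordinary category of $\B$. Since $\C$ is closed under cotensors we have $C^V \in \C$, so $f \bot C^V$ ordinarily; unwinding the identification gives that $\V(V,\B(f,C))$ is a bijection for all $V$, hence $f \bot_\V C$ and $f \in \C^{\top_\V}$. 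The one step needing care in both parts is the naturality, in the variable being (co)tensored, of the (co)tensor adjunction, which is exactly what identifies $\V(V,\B(f,-))$ with the underlying hom of the (co)tensored morphism; once this is set up there is no genuine obstacle.
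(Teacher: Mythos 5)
Your proof is correct and follows essentially the same route as the paper's: both arguments reduce the claim that $\B(f,B)$ (resp.\ $\B(f,C)$) is an isomorphism in $\V$ to the statement that $\V(V,\B(f,B))$ is a bijection for every $V$, and then use the (co)tensor adjunction's naturality in the $\B$-variable to identify this map with the ordinary hom-map $\B_0(V \otimes f, B)$ (resp.\ $\B_0(f, C^V)$), which is bijective by the closure hypothesis on $\Sigma$ (resp.\ $\C$). The paper's proof is merely a terser version of yours, recording the identification as an isomorphism ``in the arrow category of $\SET$'' and leaving part 2 as the evident dual, exactly as you carry it out.
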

\begin{proof}
1.  Given $C \in \Sigma^\bot$ and $h \in \Sigma$, it suffices to show that $h \bot_\V C$.  Letting $\B_0$ denote the underlying ordinary category of $\B$, we have that for each $V \in \V$, $\V(V,\B(h,C)) \cong \B_0(V \otimes h,C)$ in the arrow category of $\SET$, and the latter morphism is iso.  Hence $\B(h,C)$ is an isomorphism in $\V$.  2 is proved analogously.
\end{proof}

\begin{PropSub}\label{thm:orth_to_all_is_iso}
For a $\V$-category $\B$, $(\ob\B)^{\top_\V} = \Iso \B = (\mor\B)^{\uparrow_\V}$.
\end{PropSub}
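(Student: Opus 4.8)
The plan is to prove both claimed equalities by showing that each of the three classes $(\ob\B)^{\top_\V}$, $\Iso\B$, and $(\mor\B)^{\uparrow_\V}$ coincides with $\Iso\B$; all of the argument is formal, the only step requiring a genuine idea being one of the ``converse'' inclusions.

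First I would dispatch the inclusions of $\Iso\B$ into the two outer classes. If $f$ is an isomorphism then each $\B(f,C)$ is an isomorphism in $\V$, since any functor preserves isomorphisms, so $f \in (\ob\B)^{\top_\V}$. For the other inclusion, note that in the defining square \eqref{eqn:orth_pb} for $f \downarrow_\V m$ the two vertical legs $\B(f,B_1)$ and $\B(f,B_2)$ are then isomorphisms; a commutative square whose two parallel edges are isomorphisms is automatically a pullback (one solves any cone uniquely using the inverses), so $f \downarrow_\V m$ for every $m$ and hence $f \in (\mor\B)^{\uparrow_\V}$. This gives $\Iso\B \subseteq (\ob\B)^{\top_\V}$ and $\Iso\B \subseteq (\mor\B)^{\uparrow_\V}$.

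For $(\ob\B)^{\top_\V} \subseteq \Iso\B$, suppose $f:A_1 \to A_2$ has $\B(f,C)$ an isomorphism for every $C$. Passing to the underlying ordinary category $\B_0$ via the representable functor $\V(I,-):\V \to \SET$ (which preserves isomorphisms), the components $\B_0(f,C):\B_0(A_2,C) \to \B_0(A_1,C)$ are isomorphisms natural in $C$, so $\B_0(f,-)$ is a natural isomorphism of the representables $\B_0(A_2,-)$ and $\B_0(A_1,-)$; the ordinary Yoneda lemma then identifies $f$ as an isomorphism.

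The one non-formal step, and the main obstacle, is $(\mor\B)^{\uparrow_\V} \subseteq \Iso\B$: here there is no $\V$-terminal object to invoke the preceding Remark, so I would instead test a morphism $e:A_1 \to A_2$ in $(\mor\B)^{\uparrow_\V}$ against \emph{itself}, taking $m := e$ in \eqref{eqn:orth_pb}. Applying the representable, hence limit-preserving, functor $\V(I,-)$ converts the resulting $\V$-pullback into a pullback of the hom-classes of $\B_0$, whose corner is $\B_0(A_2,A_1)$. The elements $1_{A_2}$ and $1_{A_1}$ form a compatible cone, since each maps to $e$ in $\B_0(A_1,A_2)$, so the pullback furnishes a unique $g:A_2 \to A_1$ with $e \circ g = 1_{A_2}$ and $g \circ e = 1_{A_1}$; thus $g$ is a two-sided inverse and $e$ is an isomorphism. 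Assembling the four inclusions yields $(\ob\B)^{\top_\V} = \Iso\B = (\mor\B)^{\uparrow_\V}$, with no completeness hypotheses on $\B$ required.
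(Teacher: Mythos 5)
Your proposal is correct, and all four inclusions go through as you describe; the difference from the paper lies in how much is done from first principles. For $(\mor\B)^{\uparrow_\V} \subseteq \Iso\B$ the paper simply cites \cite[3.7]{Lu:EnrFactnSys}, which is exactly the statement that a morphism $\V$-orthogonal to itself is invertible; your argument (set $m := e$ in \eqref{eqn:orth_pb}, apply the representable, limit-preserving functor $\V(I,-)$, and read off a two-sided inverse from the resulting pullback of hom-classes) is in essence a proof of that cited lemma, so here you have unpacked rather than replaced the paper's reasoning. The genuine divergence is in $(\ob\B)^{\top_\V} \subseteq \Iso\B$: the paper stays at the enriched level, noting that $\B(h,-):\B(B_2,-) \rightarrow \B(B_1,-)$ is a $\V$-natural isomorphism and invoking full faithfulness of the Yoneda embedding $\B^\op \rightarrow \VCAT(\B,\uV)$ via the weak Yoneda lemma of \cite{Ke:Ba}, whereas you descend to the underlying ordinary category via $\V(I,-)$ and apply ordinary Yoneda. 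Both are sound; your route is more elementary in that it avoids the enriched Yoneda machinery and the functor $\V$-category, and it causes no size trouble in the paper's setting \pbref{par:cat_classes}, since bijectivity of precomposition with $f$ on hom-classes already yields an inverse by testing at $C = A_1$ and $C = A_2$. What the paper's approach buys is brevity and re-use: the self-orthogonality lemma and the weak Yoneda lemma are results it needs elsewhere anyway (e.g.\ \cite[3.7]{Lu:EnrFactnSys} reappears in the proof of \bref{thm:refl_subcats_are_orth_subcats}), while your version buys a self-contained proof requiring no completeness or size hypotheses and no appeal to the companion paper.
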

\begin{proof}
$\Iso\B$ is clearly included in both the rightmost and leftmost classes.  Also, the inclusion $(\mor\B)^{\uparrow_\V} \subs \Iso\B$ follows from \cite[3.7]{Lu:EnrFactnSys}.  Lastly, if $h:B_1 \rightarrow B_2$ lies in $(\ob\B)^{\top_\V}$, then the $\V$-natural transformation $\B(h,-):\B(B_2,-) \rightarrow \B(B_1,-)$ is an isomorphism; but by the weak Yoneda lemma (\cite[1.9]{Ke:Ba}), the (ordinary) functor \hbox{$Y:\B^\op \rightarrow \VCAT(\B,\uV)$} given by $YB = \B(B,-)$ is fully-faithful, so $h$ is iso.
\end{proof}

\begin{PropSub} \label{thm:orth_of_morphs_in_orth_subcat}
Let $\B$ be a $\V$-category and $\Sigma$ a class of morphisms in $\B$.  Then for any morphisms $e:A_1 \rightarrow A_2$ in $\Sigma$ and $m:B_1 \rightarrow B_2$ in $\B_\Sigma$, we have that $e \downarrow_\V m$ in $\B$.
\end{PropSub}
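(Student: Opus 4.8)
The plan is to observe that the square \eqref{eqn:orth_pb} whose pullback-property defines $e \downarrow_\V m$ has both of its two vertical legs given by isomorphisms in $\V$, and then to invoke the elementary fact that any commutative square in which the two parallel edges are isomorphisms is a pullback.

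First I would unpack the hypotheses using Definition \bref{def:orth_subcat_sigma}. Saying that $m:B_1 \rightarrow B_2$ is a morphism \emph{in} $\B_\Sigma$ means precisely that both its domain $B_1$ and its codomain $B_2$ lie in $\Sigma^{\bot_\V}$. Since $e \in \Sigma$, the defining condition of $\Sigma^{\bot_\V}$ then yields $e \bot_\V B_1$ and $e \bot_\V B_2$; by part (1) of Definition \bref{def:orth_subcat_sigma}, this says exactly that the two morphisms
$$\B(e,B_1):\B(A_2,B_1) \rightarrow \B(A_1,B_1), \qquad \B(e,B_2):\B(A_2,B_2) \rightarrow \B(A_1,B_2)$$
are isomorphisms in $\V$. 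These are, however, precisely the left-hand and right-hand vertical edges of the square \eqref{eqn:orth_pb} associated to the pair $(e,m)$.

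It then remains to conclude that a commutative square in which both vertical arrows are isomorphisms is a pullback. This is purely formal and holds in an arbitrary category $\V$: given a cone over the cospan formed by the bottom and right edges, one transports it across the (invertible) right-hand leg and composes with the top edge to produce the unique mediating morphism, whose uniqueness follows from the invertibility of the left-hand leg. I would either spell this out in a line or simply cite it as standard. I do not expect any genuine obstacle here; the only point worth flagging is that, because the argument uses nothing beyond the existence of the two inverses, no completeness assumption on $\V$ is required, consistent with the standing convention of the paper that limits in $\V$ are not assumed. Hence the square \eqref{eqn:orth_pb} is a pullback in $\V$, which is exactly the assertion $e \downarrow_\V m$.
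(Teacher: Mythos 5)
Your proof is correct and follows essentially the same route as the paper's: both reduce the claim to the observation that $e \bot_\V B_1$ and $e \bot_\V B_2$ (which hold because $m$ lives in $\B_\Sigma$ and $e \in \Sigma$) make the two vertical edges of the square \eqref{eqn:orth_pb} isomorphisms, and then invoke the standard fact that a commutative square whose two parallel edges are isomorphisms is a pullback. One minor slip in your sketch of that standard fact: the mediating morphism is obtained by applying $\B(e,B_1)^{-1}$ to the cone's leg into $\B(A_1,B_1)$ (the invertibility of the right-hand leg $\B(e,B_2)$ is what verifies compatibility with the top edge), rather than by any transport across the right-hand leg; this imprecision does not affect the correctness of the argument.
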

\begin{proof}
Since $e \bot_\V B_1$ and $e \bot_\V B_2$, the left and right sides of the commutative square \eqref{eqn:orth_pb} are isomorphisms, so the square is a pullback.
\end{proof}

In the non-enriched context, the first of the following equivalences appears in the proof of Lemma 4.2.1 of \cite{FrKe}, and variants of both equivalences are given in \cite{Pum}.

\begin{PropSub}\label{prop:orth_and_adj}
Let $F \dashv G:\C \rightarrow \B$ be a $\V$-adjunction, $f:B_1 \rightarrow B_2$ a morphism in $\B$, $g:C_1 \rightarrow C_2$ a morphism in $\C$, and $C$ an object of $\C$.
\begin{enumerate}
\item $Ff \downarrow_\V g \;\;\Longleftrightarrow\;\; f \downarrow_\V Gg$.
\item $Ff \bot_\V C \;\;\Longleftrightarrow\;\; f \bot_\V GC$.
\end{enumerate}
\end{PropSub}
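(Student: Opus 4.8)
The plan is to reduce both equivalences to the defining $\V$-natural isomorphism of the adjunction. Writing $\alpha_{BC}:\C(FB,C) \xrightarrow{\sim} \B(B,GC)$ for its components (as in the construction underlying \bref{prop:adjn_via_radj_and_unit}), which are isomorphisms in $\V$ that are $\V$-natural in $B \in \B$ and $C \in \C$, I observe that every hom-object occurring in either condition is a corner of one of these isomorphisms. The uniform strategy is therefore to transport the relevant diagram across $\alpha$ and to invoke the fact that an isomorphism of diagrams in $\V$ preserves and reflects both the property of being a pullback (needed for part 1) and the property of being an isomorphism (needed for part 2).

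For the first equivalence I would write out the two orthogonality squares of the shape \eqref{eqn:orth_pb}. By definition, $Ff \downarrow_\V g$ asserts that the square with corners $\C(FB_i,C_j)$ and edges $\C(Ff,C_j)$, $\C(FB_i,g)$ is a pullback in $\V$, while $f \downarrow_\V Gg$ asserts the analogous statement for the square with corners $\B(B_i,GC_j)$ and edges $\B(f,GC_j)$, $\B(B_i,Gg)$. The four components $\alpha_{B_i C_j}$ then furnish an isomorphism from the first square to the second: commutativity of the four connecting faces is exactly the $\V$-naturality of $\alpha$, contravariantly in $f$ along one opposing pair of faces and covariantly in $g$ along the other. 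Since a commutative square is a pullback if and only if any square isomorphic to it is, the two pullback conditions coincide.

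For the second equivalence I would use naturality of $\alpha$ in the single morphism $f:B_1 \to B_2$, which produces a commutative square in $\V$ whose horizontal edges are the isomorphisms $\alpha_{B_2 C}$ and $\alpha_{B_1 C}$ and whose vertical edges are $\C(Ff,C)$ and $\B(f,GC)$; as the horizontal edges are invertible, the left vertical map is an isomorphism precisely when the right one is, which by the definition of $\bot_\V$ in \bref{def:orth_subcat_sigma} is the claim. The one point demanding care in both parts is the passage from $\V$-naturality of $\alpha$ to honest commuting squares in $\V$: this is legitimate because a $\V$-natural transformation between $\V$-functors valued in $\uV$ has underlying components in $\V$ whose ordinary naturality squares commute in $\V$, so the transported configurations are genuine diagrams in $\V$ to which the preservation and reflection of pullbacks and of isomorphisms apply. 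I expect no serious obstacle beyond this bookkeeping, since the content is carried entirely by the naturality of the adjunction isomorphism.
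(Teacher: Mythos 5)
Your proof is correct and takes essentially the same route as the paper's: for part 1 the paper likewise transports the orthogonality square across the adjunction, asserting that the square with corners $\C(FB_i,C_j)$ is isomorphic as a commutative diagram to the one with corners $\B(B_i,GC_j)$, and for part 2 it simply notes that $\C(Ff,C) \cong \B(f,GC)$ in the arrow category $[\Two,\V]$. Your explicit bookkeeping about extracting honest commuting squares in $\V$ from the $\V$-naturality of the adjunction isomorphism is precisely the detail the paper leaves implicit.
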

\begin{proof}
1.  Via the given $\V$-adjunction, the commutative diagram
$$
\xymatrix {
\C(FB_2,C_1) \ar[rr]^{\C(FB_2,g)} \ar[d]_{\C(Ff,C_1)} & & \C(FB_2,C_2) \ar[d]^{\C(Ff,C_2)} \\
\C(FB_1,C_1) \ar[rr]^{\C(FB_1,g)}                     & & \C(FB_1,C_2)
}
$$
is isomorphic to the commutative diagram
$$
\xymatrix {
\B(B_2,GC_1) \ar[rr]^{\B(B_2,Gg)} \ar[d]_{\B(f,GC_1)} & & \B(B_2,GC_2) \ar[d]^{\B(f,GC_2)} \\
\B(B_1,GC_1) \ar[rr]^{\B(B_1,Gg)}                     & & {\B(B_1,GC_2)\;.}
}
$$
2.  $\C(Ff,C) \cong \B(f,GC)$ in the arrow category $[\Two,\V]$.
\end{proof}

In the non-enriched setting, the first of the following equations is noted in \cite[3.3]{CHK} and specializes \cite[4.2.1]{FrKe}.

\begin{CorSub} \label{thm:charns_of_sigma_p}
Let $F \dashv G:\C \rightarrow \B$ be a $\V$-adjunction.  Then 
$$(G(\mor\C))^{\uparrow_\V} = \Sigma_F = (G(\ob\C))^{\top_\V}\;.$$
Hence, in particular, $(\Sigma_F,\Sigma_F^{\downarrow_\V})$ is a $\V$-prefactorization-system on $\B$, and $(\Sigma_F,\Sigma_F^{\bot_\V})$ is a $\V$-orthogonal-pair in $\B$.
\end{CorSub}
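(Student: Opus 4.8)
The plan is to deduce both displayed equalities by transferring the defining orthogonality conditions across the adjunction $F \dashv G$ and then recognizing the transferred condition as invertibility in $\C$. The two ingredients are \bref{prop:orth_and_adj}, which converts a statement about $f$ and an image $Gg$ (resp.\ $GC$) in $\B$ into the corresponding statement about $Ff$ and $g$ (resp.\ $C$) in $\C$, and \bref{thm:orth_to_all_is_iso} read in the $\V$-category $\C$, which identifies the morphisms orthogonal to every morphism (resp.\ every object) of $\C$ with $\Iso\C$.

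First I would establish $(G(\mor\C))^{\uparrow_\V} = \Sigma_F$. By definition $f$ lies in the left-hand class iff $f \downarrow_\V Gg$ for every $g \in \mor\C$; by \bref{prop:orth_and_adj}(1) this holds iff $Ff \downarrow_\V g$ for every $g \in \mor\C$, i.e.\ iff $Ff \in (\mor\C)^{\uparrow_\V}$. Since \bref{thm:orth_to_all_is_iso} gives $(\mor\C)^{\uparrow_\V} = \Iso\C$, this says precisely that $Ff$ is an isomorphism, i.e.\ $f \in \Sigma_F$. The equality $(G(\ob\C))^{\top_\V} = \Sigma_F$ is proved the same way, now using \bref{prop:orth_and_adj}(2) to rewrite $f \bot_\V GC$ as $Ff \bot_\V C$ and the other half of \bref{thm:orth_to_all_is_iso}, namely $(\ob\C)^{\top_\V} = \Iso\C$, to conclude $f \in \Sigma_F$.

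For the concluding clauses I would invoke the idempotence of the antitone Galois connections defined by $\downarrow_\V$ and by $\bot_\V$. Having shown $\Sigma_F = (G(\mor\C))^{\uparrow_\V}$ exhibits $\Sigma_F$ as a class of the form $\sH^{\uparrow_\V}$, hence a saturated (``polar'') class: $\Sigma_F^{\downarrow_\V\uparrow_\V} = \sH^{\uparrow_\V\downarrow_\V\uparrow_\V} = \sH^{\uparrow_\V} = \Sigma_F$. Together with the tautology $\Sigma_F^{\downarrow_\V} = \Sigma_F^{\downarrow_\V}$, this is exactly condition~1 defining a $\V$-prefactorization-system for the pair $(\Sigma_F, \Sigma_F^{\downarrow_\V})$. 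Likewise $\Sigma_F = (G(\ob\C))^{\top_\V}$ exhibits $\Sigma_F$ as a class $\D^{\top_\V}$, so $\Sigma_F^{\bot_\V\top_\V} = \Sigma_F$; feeding $\Sigma = \Sigma_F$ into the earlier remark that $(\Sigma^{\bot_\V\top_\V}, \Sigma^{\bot_\V})$ is always a $\V$-orthogonal-pair then yields that $(\Sigma_F, \Sigma_F^{\bot_\V})$ is a $\V$-orthogonal-pair.

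I do not anticipate a genuine obstacle, as every step is an application of a result already in hand. The only points demanding a little care are the legitimacy of applying \bref{thm:orth_to_all_is_iso} to the codomain $\C$ rather than to $\B$—which is permitted, since that proposition is stated for an arbitrary $\V$-category—and the bookkeeping of the Galois-connection idempotence $\sH^{\uparrow_\V\downarrow_\V\uparrow_\V} = \sH^{\uparrow_\V}$, together with its $\bot_\V/\top_\V$ analogue, used to see that a class already presented as a polar is saturated.
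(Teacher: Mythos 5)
Your proof is correct and takes essentially the same route as the paper: the paper's proof consists precisely of the two computations $(G(\mor\C))^{\uparrow_\V} = F^{-1}((\mor\C)^{\uparrow_\V}) = F^{-1}(\Iso\C) = \Sigma_F$ and $(G(\ob\C))^{\top_\V} = F^{-1}((\ob\C)^{\top_\V}) = F^{-1}(\Iso\C) = \Sigma_F$, obtained exactly as you do from \bref{prop:orth_and_adj} together with \bref{thm:orth_to_all_is_iso} applied to $\C$. The paper leaves the ``hence, in particular'' clauses unjustified, and your Galois-connection argument (saturation of polar classes plus the earlier remark on orthogonal pairs) is the standard reasoning being tacitly invoked there.
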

\begin{proof}
By \bref{prop:orth_and_adj} and \bref{thm:orth_to_all_is_iso}, we may compute as follows:
\begin{description}
\item[] $(G(\mor\C))^{\uparrow_\V} = F^{-1}((\mor\C)^{\uparrow_\V}) = F^{-1}(\Iso\C) = \Sigma_F$;  
\item[] $(G(\ob\C))^{\top_\V} = F^{-1}((\ob\C)^{\top_\V}) = F^{-1}(\Iso\C) = \Sigma_F$.
\end{description}
\end{proof}

Clearly any sub-$\V$-category of $\B$ of the form $\B_\Sigma$ is replete.  The following proposition shows that every $\V$-reflective-subcategory of $\B$ is of the form $\B_\Sigma$ for each of two canonical choices of $\Sigma$:

\begin{PropSub} \label{thm:refl_subcats_are_orth_subcats}
Let $K \nsststile{}{\rho} J : \C \hookrightarrow \B$ be a $\V$-reflection.
\begin{enumerate}
\item $\C = \B_{\Sigma_K} = \B_{\Sigma}$, where $\Sigma := \{\rho_B\;|\;B \in \B\}$.
\item $(\mor\C)^{\uparrow_\V} = \Sigma_K = (\ob\C)^{\top_\V}$, where the right- and leftmost expressions are evaluated with respect to $\B$.
\item $(\Sigma_K,\Sigma_K^{\downarrow_\V})$ is a $\V$-prefactorization-system on $\B$, and $(\Sigma_K,\C)$ is a $\V$-orthogonal-pair in $\B$.
\end{enumerate}
\end{PropSub}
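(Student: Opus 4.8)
The plan is to derive statements 2 and 3 almost immediately from Corollary \bref{thm:charns_of_sigma_p}, reserving the real work for statement 1. Applying that corollary to the $\V$-adjunction $K \dashv J$ and using that $J$ is a full subcategory inclusion, so that $J(\mor\C) = \mor\C$ and $J(\ob\C) = \ob\C$ as classes in $\B$, one reads off at once $(\mor\C)^{\uparrow_\V} = \Sigma_K = (\ob\C)^{\top_\V}$, which is statement 2, and also that $(\Sigma_K,\Sigma_K^{\downarrow_\V})$ is a $\V$-prefactorization-system, the first half of statement 3. For the second half of statement 3, I note that the orthogonal-pair condition for $(\Sigma_K,\C)$ amounts to the two equations $\Sigma_K^{\bot_\V} = \C$ and $\C^{\top_\V} = \Sigma_K$; the latter is exactly statement 2 (reading $\C^{\top_\V}$ as $(\ob\C)^{\top_\V}$), while the former is the assertion $\B_{\Sigma_K} = \C$ contained in statement 1. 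Thus everything reduces to statement 1.

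For statement 1, the plan is to prove the chain of inclusions $\C \subseteq \B_{\Sigma_K} \subseteq \B_\Sigma \subseteq \C$. First I would record that $\Sigma \subseteq \Sigma_K$: since $J$ is fully faithful, the counit $\epsilon$ of $K \dashv J$ is invertible, so the triangle identity $\epsilon K \cdot K\rho = 1_K$ forces $K\rho_B = (\epsilon_{KB})^{-1}$ to be an isomorphism, i.e. $\rho_B \in \Sigma_K$ for every $B$. As orthogonal complements are inclusion-reversing, $\Sigma \subseteq \Sigma_K$ yields $\B_{\Sigma_K} = \Sigma_K^{\bot_\V} \subseteq \Sigma^{\bot_\V} = \B_\Sigma$, the middle inclusion. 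For the first inclusion $\C \subseteq \B_{\Sigma_K}$, I use statement 2: every $f \in \Sigma_K$ lies in $(\ob\C)^{\top_\V}$, so $f \bot_\V C$ for all $C \in \C$; read the other way, each $C \in \C$ is orthogonal to every $f \in \Sigma_K$, that is $C \in \Sigma_K^{\bot_\V} = \B_{\Sigma_K}$.

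The substantive step, and the one I expect to be the main obstacle, is the last inclusion $\B_\Sigma \subseteq \C$, the classical orthogonality characterization of a reflective subcategory. Given $B \in \B_\Sigma$, I would use two orthogonality relations fed to different homs. First, $\rho_B \bot_\V B$; since enriched orthogonality implies ordinary orthogonality, this yields on underlying categories a unique $g : JKB \to B$ with $g \cdot \rho_B = 1_B$, so $g$ is a retraction of $\rho_B$. Second, $\rho_B \bot_\V JKB$, which holds because $\rho_B \in \Sigma \subseteq \Sigma_K = (\ob\C)^{\top_\V}$ while $JKB \in \C$; this says precomposition with $\rho_B$ is injective on maps $JKB \to JKB$. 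Since both $\rho_B \cdot g$ and $1_{JKB}$ become $\rho_B$ after precomposing with $\rho_B$, they coincide, so $\rho_B \cdot g = 1_{JKB}$ and $\rho_B$ is an isomorphism; hence $B \cong JKB \in \C$, and repleteness of $\C$ gives $B \in \C$. The care needed here is to route the two orthogonality relations to the correct homs ($B$ to produce the section, $JKB$ to force the retraction) and to observe that the resulting underlying-category isomorphism $\rho_B$ is a genuine $\V$-isomorphism, which is automatic since isomorphisms in a $\V$-category are detected in its underlying ordinary category.
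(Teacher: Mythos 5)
Your proposal is correct, and its skeleton is the same as the paper's: statement 2 is read off from Corollary \bref{thm:charns_of_sigma_p} applied to $K \dashv J$, statement 3 is reduced to statement 2 together with the equality $\B_{\Sigma_K} = \C$ from statement 1, and statement 1 is proved via the chain $\C \subseteq \B_{\Sigma_K} \subseteq \B_\Sigma \subseteq \C$, with the first two inclusions obtained exactly as you do (the paper simply asserts $\Sigma \subseteq \Sigma_K$ where you verify it from the triangle identity). The one place you genuinely diverge is the substantive inclusion $\B_\Sigma \subseteq \C$: the paper observes that $\rho_B$ is a morphism of $\Sigma$ whose domain and codomain both lie in $\B_\Sigma$, invokes Proposition \bref{thm:orth_of_morphs_in_orth_subcat} to conclude $\rho_B \downarrow_\V \rho_B$, and then cites the external fact (\cite[3.7]{Lu:EnrFactnSys}) that a morphism $\V$-orthogonal to itself is an isomorphism; you instead unfold the same two orthogonality relations $\rho_B \bot_\V B$ and $\rho_B \bot_\V KB$ at the level of underlying hom-sets and run the classical section/retraction argument to produce a two-sided inverse directly. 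The two finishes rest on identical ingredients, but yours is self-contained and elementary (no appeal to the self-orthogonality lemma), while the paper's is shorter by reusing its stock of lemmas; your closing remark that invertibility in the underlying ordinary category suffices is also correct, since isomorphism in a $\V$-category is by convention a notion of the underlying category.
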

\begin{proof}
2 follows from \bref{thm:charns_of_sigma_p}.  Regarding 1, first observe that $\Sigma \subs \Sigma_K$, so that $\B_{\Sigma_K} \subs \B_\Sigma$.  Also, by 2, $\Ob\B_{\Sigma_K} = (\Sigma_K)^{\bot_\V} = (\Ob\C)^{\top_\V\bot_\V} \sups \Ob\C$.  Hence it now suffices to show $\B_\Sigma \subs \C$.  Suppose $B \in \B_\Sigma$.  Since we also have that $KB \in \C \subs \B_\Sigma$, the morphism $\rho_B:B \rightarrow KB$ lies in $\B_\Sigma$, so since $\rho_B \in \Sigma$ we deduce by \bref{thm:orth_of_morphs_in_orth_subcat} that $\rho_B \downarrow_\V \rho_B$.  Hence by \cite[3.7]{Lu:EnrFactnSys}, $\rho_B$ is iso, so $B \in \C$.  Lastly, observe that 3 follows from \bref{thm:charns_of_sigma_p} and 1.
\end{proof}

\begin{PropSub}\label{thm:ord_refl_closed_under_cot_is_enr}
Let $\C$ be a reflective subcategory of the underlying ordinary category of a $\V$-category $\B$, and suppose that $\B$ is cotensored and $\C$ is closed under cotensors in $\B$.  Then $\C$ is a $\V$-reflective-subcategory of $\B$.
\end{PropSub}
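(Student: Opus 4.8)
The plan is to build the left $\V$-adjoint to the inclusion $J:\C \hookrightarrow \B$ directly out of the given ordinary reflection, by verifying the hypotheses of Proposition \bref{prop:adjn_via_radj_and_unit}. Write $\B_0$ for the underlying ordinary category of $\B$, and let $K \dashv J:\C \hookrightarrow \B_0$ be the ordinary reflection, with unit $\rho$. For each $B \in \B$ we then have an object $KB \in \C$ and a morphism $\rho_B:B \rightarrow KB$ in $\B_0$ (hence in $\B$), with the property that $\B_0(\rho_B,C):\B_0(KB,C) \rightarrow \B_0(B,C)$ is a bijection for every $C \in \C$. In the language of Definition \bref{def:orth_subcat_sigma}, this is precisely the statement that $\rho_B \in \C^{\top}$ for each $B$.

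The key step is to promote this \emph{ordinary} orthogonality to \emph{$\V$-}orthogonality, and this is exactly where the cotensor hypotheses enter. Since $\B$ is cotensored and $\C$ is closed under cotensors in $\B$, Proposition \bref{thm:case_where_ord_orth_impl_enr}(2) gives $\C^{\top_\V} = \C^{\top}$. Consequently each $\rho_B$ lies in $\C^{\top_\V}$; that is, $\B(\rho_B,C):\B(KB,C) \rightarrow \B(B,C)$ is an isomorphism in $\V$ for every $C \in \C$. This is the one nontrivial input to the argument, and I expect it to be the main obstacle: everything else is a matter of packaging. (Note that only the inclusion $\C^{\top} \subseteq \C^{\top_\V}$ is used; the reverse, enriched-implies-ordinary, is not needed.)

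With this in hand I would apply Proposition \bref{prop:adjn_via_radj_and_unit} taking $G := J$ (a $\V$-functor since $\C$ is a sub-$\V$-category), the object assignment $FB := KB$, and the candidate unit $\eta_B := \rho_B$. Because $\C$ is a \emph{full} sub-$\V$-category, the hom map $J_{KB,C}:\C(KB,C) \rightarrow \B(JKB,JC) = \B(KB,C)$ is an isomorphism, so the composite
$$\phi_{BC} = \B(\eta_B,JC) \circ J_{KB,C} : \C(KB,C) \longrightarrow \B(B,JC)$$
agrees, up to that isomorphism, with $\B(\rho_B,C)$, which the previous step shows to be an isomorphism in $\V$. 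Thus the hypotheses of Proposition \bref{prop:adjn_via_radj_and_unit} are satisfied, and the morphisms $\rho_B$ assemble into the unit of a $\V$-adjunction $K \nsststile{}{\rho} J$.

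Finally, since $J$ is the inclusion of a full replete sub-$\V$-category (fullness and repleteness being inherited from the ordinary reflective subcategory $\C$, as these are properties of the underlying objects and hom-sets), this $\V$-adjunction is by definition a $\V$-reflection in the sense of \bref{par:refl_subcats_and_idm_mnds}, so $\C$ is a $\V$-reflective-subcategory of $\B$, as required. Beyond the orthogonality upgrade, the only remaining checks are the routine identifications of hom-objects afforded by fullness, which I would not expect to present any difficulty.
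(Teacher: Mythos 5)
Your proof is correct and follows essentially the same route as the paper's: the reflection units lie in $\C^{\top}$, the cotensor hypotheses upgrade this to $\C^{\top_\V}$ via Proposition \bref{thm:case_where_ord_orth_impl_enr}, and Proposition \bref{prop:adjn_via_radj_and_unit} then assembles the $\V$-adjunction. The extra detail you supply about fullness identifying $\C(KB,C)$ with $\B(KB,C)$ is exactly what the paper leaves implicit.
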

\begin{proof}
For all $B \in \B$, the reflection morphism $\rho_B:B \rightarrow KB$ lies in $\C^\top$, but by \bref{thm:case_where_ord_orth_impl_enr}, $\C^\top = \C^{\top_\V}$, so $\rho_B \bot_\V C$ for all $C \in \C$ and the result follows by \bref{prop:adjn_via_radj_and_unit}.
\end{proof}

\begin{DefSub}
Given a full sub-$\V$-category $\C$ of a $\V$-category $\B$, the \textit{$\V$-reflective hull} of $\C$ (in $\B$), if it exists, is the smallest $\V$-reflective-subcategory of $\B$ containing $\C$.
\end{DefSub}

\begin{PropSub}\label{thm:double_perp_refl_hull}
Let $\C$ be a full sub-$\V$-category a $\V$-category $\B$.
\begin{enumerate}
\item Any $\V$-reflective-subcategory of $\B$ that contains $\C$ must also contain $\C^{\top_\V\bot_\V}$.
\item Hence, if $\C^{\top_\V\bot_\V}$ is a $\V$-reflective-subcategory of $\B$, then the $\V$-reflective-hull of $\C$ in $\B$ exists and equals $\C^{\top_\V\bot_\V}$.
\end{enumerate}
\end{PropSub}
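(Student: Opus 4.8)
The plan is to recognize that $(-)^{\top_\V}$ and $(-)^{\bot_\V}$ constitute a Galois connection between classes of morphisms and classes of objects of $\B$, both ordered by inclusion, and to combine this with the fact---extractable from \bref{thm:refl_subcats_are_orth_subcats}---that every $\V$-reflective-subcategory is \emph{fixed} by the induced double-operation $(-)^{\top_\V\bot_\V}$. Once these two ingredients are in place, both parts follow formally; there is no substantive analytic content, only the bookkeeping of keeping the inclusion-reversals pointing in the right direction.

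For part 1, I would first record the two basic monotonicity properties: if $\C_1 \subseteq \C_2$ as classes of objects, then $\C_2^{\top_\V} \subseteq \C_1^{\top_\V}$, and dually $(-)^{\bot_\V}$ reverses inclusions of classes of morphisms; both are immediate from the universally quantified definitions in \bref{def:orth_subcat_sigma}. Next, given any $\V$-reflective-subcategory $\D$ of $\B$, say with reflection $K \dashv J : \D \hookrightarrow \B$, such that $\C \subseteq \ob\D$, I would invoke parts 1 and 2 of \bref{thm:refl_subcats_are_orth_subcats} to write $\ob\D = \B_{\Sigma_K} = (\Sigma_K)^{\bot_\V}$ together with $\Sigma_K = (\ob\D)^{\top_\V}$, whence $\ob\D = (\ob\D)^{\top_\V\bot_\V}$; that is, $\ob\D$ is fixed by $(-)^{\top_\V\bot_\V}$. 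Applying $(-)^{\top_\V}$ and then $(-)^{\bot_\V}$ to the inclusion $\C \subseteq \ob\D$ and using monotonicity twice yields $\C^{\top_\V\bot_\V} \subseteq (\ob\D)^{\top_\V\bot_\V} = \ob\D$, which is precisely the asserted containment.

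For part 2, I would first note the Galois unit inequality $\C \subseteq \C^{\top_\V\bot_\V}$, immediate from \bref{def:orth_subcat_sigma}: for any $C \in \C$ and any $f \in \C^{\top_\V}$ one has $f \bot_\V C$ by definition, so $C \in (\C^{\top_\V})^{\bot_\V} = \C^{\top_\V\bot_\V}$. I would also observe that $\C^{\top_\V\bot_\V} = \B_{\C^{\top_\V}}$ is a subcategory of the form $\B_\Sigma$ and so is full and replete, hence a legitimate candidate to be a reflective subcategory and for the notion of reflective hull to apply. Then, under the hypothesis that $\C^{\top_\V\bot_\V}$ is $\V$-reflective, it is a $\V$-reflective-subcategory containing $\C$, while part 1 shows it is contained in \emph{every} $\V$-reflective-subcategory containing $\C$; therefore it is the smallest such, so the $\V$-reflective-hull of $\C$ exists and equals $\C^{\top_\V\bot_\V}$.

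The only point requiring genuine care---and the closest thing to an obstacle---is the extraction from \bref{thm:refl_subcats_are_orth_subcats} of the statement that a $\V$-reflective-subcategory is fixed by $(-)^{\top_\V\bot_\V}$, since this is what converts the formal Galois machinery into a statement about reflective subcategories specifically. Everything else is the routine behaviour of a Galois connection, and the discipline I would maintain throughout is that each single application of $(-)^{\top_\V}$ or $(-)^{\bot_\V}$ reverses inclusions, so that the composite $(-)^{\top_\V\bot_\V}$ is an inclusion-preserving closure operator.
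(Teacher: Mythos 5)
Your proposal is correct and follows essentially the same route as the paper: the paper's one-line proof is precisely your argument, namely $\C^{\top_\V\bot_\V} \subseteq \D^{\top_\V\bot_\V} = \Sigma_K^{\bot_\V} = \D$, which combines the monotonicity of $(-)^{\top_\V\bot_\V}$ with the fact (via parts 2 and 1 of \bref{thm:refl_subcats_are_orth_subcats}) that any $\V$-reflective-subcategory is fixed by that operation. Your additional spelling-out of part 2 (the unit inclusion $\C \subseteq \C^{\top_\V\bot_\V}$ and the repleteness of $\B_{\C^{\top_\V}}$) is left implicit in the paper but is exactly the intended reasoning.
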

\begin{proof}
If $\C \hookrightarrow \D \hookrightarrow \B$ and latter inclusion has left $\V$-adjoint $K:\B \rightarrow \D$, then $\C^{\top_\V\bot_\V} \subseteq \D^{\top_\V\bot_\V} = \Sigma_K^{\bot_\V} = \D$ by \bref{thm:refl_subcats_are_orth_subcats} 2 \& 1.
\end{proof}

\begin{CorSub}\label{thm:sigma_perp_refl_impl_reflhull_imageq}
Let $F \dashv G:\C \rightarrow \B$ be a $\V$-adjunction.  Then if $\B_{\Sigma_F} \hookrightarrow \B$ is a $\V$-reflective-subcategory, it follows that $\V$-reflective-hull of $G(\ob\C) \hookrightarrow \B$ exists and equals $\B_{\Sigma_F}$.
\end{CorSub}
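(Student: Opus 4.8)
The plan is to recognize this corollary as an immediate consequence of the two preceding results, \bref{thm:charns_of_sigma_p} and \bref{thm:double_perp_refl_hull}, once we identify the double-orthogonal closure of $G(\ob\C)$ with $\B_{\Sigma_F}$. First I would let $\D$ denote the full sub-$\V$-category of $\B$ whose objects are those in $G(\ob\C)$; note that both the operation $(-)^{\top_\V\bot_\V}$ and the formation of $\B_{\Sigma_F}$ depend only on this object class, so that \bref{thm:double_perp_refl_hull} (stated for full sub-$\V$-categories) applies verbatim to $\D$.

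The key step is the identification $\D^{\top_\V\bot_\V} = \B_{\Sigma_F}$. By \bref{thm:charns_of_sigma_p} we have $(G(\ob\C))^{\top_\V} = \Sigma_F$, i.e. $\D^{\top_\V} = \Sigma_F$. Applying $(-)^{\bot_\V}$ to both sides yields $\D^{\top_\V\bot_\V} = \Sigma_F^{\bot_\V}$. By \bref{def:orth_subcat_sigma}, the full sub-$\V$-category of $\B$ on the object class $\Sigma_F^{\bot_\V}$ is precisely $\B_{\Sigma_F}$, so $\D^{\top_\V\bot_\V} = \B_{\Sigma_F}$ as full sub-$\V$-categories.

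Finally I would invoke the hypothesis: we are given that $\B_{\Sigma_F} \hookrightarrow \B$ is a $\V$-reflective-subcategory, and by the previous step this says exactly that $\D^{\top_\V\bot_\V}$ is a $\V$-reflective-subcategory of $\B$. Applying \bref{thm:double_perp_refl_hull}(2) to $\D$ then gives that the $\V$-reflective-hull of $\D = G(\ob\C)$ exists and equals $\D^{\top_\V\bot_\V}$, which we have identified with $\B_{\Sigma_F}$. This completes the argument.

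I do not anticipate any genuine obstacle here, as the proof is a direct chaining of the quoted identifications; the only point requiring a word of care is the observation that $G(\ob\C)$ must be read as a class of objects, so that \bref{thm:charns_of_sigma_p} supplies $(G(\ob\C))^{\top_\V} = \Sigma_F$ and the double-closure formalism of \bref{thm:double_perp_refl_hull} applies to the corresponding full sub-$\V$-category without modification.
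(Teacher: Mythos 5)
Your proposal is correct and is essentially the paper's own proof: both identify $(G(\ob\C))^{\top_\V\bot_\V} = \Sigma_F^{\bot_\V} = \B_{\Sigma_F}$ via \bref{thm:charns_of_sigma_p} and then invoke \bref{thm:double_perp_refl_hull}(2). Your added remarks on reading $G(\ob\C)$ as an object class merely make explicit what the paper leaves implicit.
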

\begin{proof}
$(G(\ob\C))^{\top_\V\bot_\V} = \Sigma_F^{\bot_\V} = \B_{\Sigma_F}$ by \bref{thm:charns_of_sigma_p}, so the result follows from \bref{thm:double_perp_refl_hull} 2. 
\end{proof}

\section{Definition and characterizations of the idempotent core}\label{sec:defs}

\begin{LemSub}\label{thm:morph_on_idm_mnd}
Let $\SSS = (S,\rho,\lambda)$ and $\TT = (T,\eta,\mu)$ be monads on an object $\B$ of \mbox{2-category} $\K$, and suppose that $\SSS$ is idempotent.  Then
\begin{enumerate}
\item A 2-cell $\alpha:S \rightarrow T$ is a morphism of monads $\SSS \rightarrow \TT$ if and only if $\alpha \cdot \rho = \eta$.
\item If a morphism of monads $\SSS \rightarrow \TT$ exists, then it is unique.
\end{enumerate}
\end{LemSub}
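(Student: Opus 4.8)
Since every monad morphism satisfies the unit condition by definition \bref{par:mnd_morphs}, the ``only if'' half of part~1 is immediate, and the substance lies in the converse: assuming $\alpha \cdot \rho = \eta$, I must verify the multiplication axiom $\mu \cdot (\alpha \circ \alpha) = \alpha \cdot \lambda$. The plan is to leverage idempotency. By \bref{par:idm_mnd}, $\lambda$ is invertible with $\lambda^{-1} = \rho S = S\rho$, so vertically composing the desired axiom with $\lambda^{-1} = \rho S$ shows it to be equivalent to the single identity $\mu \cdot (\alpha \circ \alpha) \cdot \rho S = \alpha$. It therefore suffices to simplify the left-hand side down to $\alpha$.

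To do so I would expand the horizontal composite as $\alpha \circ \alpha = \alpha T \cdot S\alpha$ and apply the interchange identity $S\alpha \cdot \rho S = \rho T \cdot \alpha$ (naturality of the unit $\rho$ against $\alpha$), reducing the left-hand side to $\mu \cdot \alpha T \cdot \rho T \cdot \alpha$. Whiskering the hypothesis then gives $\alpha T \cdot \rho T = (\alpha \cdot \rho)T = \eta T$, and a unit law of $\TT$ gives $\mu \cdot \eta T = 1_T$; the whole expression collapses to $\alpha$, completing part~1.

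For part~2, the tempting direct manipulations all turn out to be circular, so the plan is instead to observe that a monad morphism rigidly determines an $\SSS$-algebra structure on $T$. By \bref{par:talg}, the composite $\beta := \mu \cdot \alpha T : ST \to T$ is an action of $\SSS$ on the $1$-cell $T$, and $\alpha$ is recovered from it as $\alpha = \beta \cdot S\eta$. Because $\SSS$ is idempotent, \bref{par:idm_mnd} forces every $\SSS$-action on $T$ to be an isomorphism with inverse $\rho T$; hence $\beta = (\rho T)^{-1}$ with no dependence whatsoever on $\alpha$, and so $\alpha = (\rho T)^{-1} \cdot S\eta$ is completely determined. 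Any two monad morphisms $\SSS \to \TT$ must therefore agree.

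I expect the main obstacle to be purely bookkeeping: keeping the $2$-categorical whiskerings and interchange identities (such as $S\alpha \cdot \rho S = \rho T \cdot \alpha$) in the correct variance and composition order. The genuine idea, by contrast, is the recognition in part~2 that idempotency collapses the induced action to $(\rho T)^{-1}$, which is exactly what breaks the circularity obstructing a naive uniqueness argument.
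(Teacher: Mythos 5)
Your proof is correct and follows essentially the same route as the paper's: part 1 by postcomposing with $\lambda^{-1} = \rho S$ and collapsing $\mu \cdot \alpha T \cdot S\alpha \cdot \rho S$ via naturality, the whiskered hypothesis $\alpha T \cdot \rho T = \eta T$, and the unit law; part 2 by recognizing via \bref{par:talg} that the induced $\SSS$-action $\beta = \mu \cdot \alpha T$ on $T$ must equal $(\rho T)^{-1}$ by \bref{par:idm_mnd}, so that $\alpha = (\rho T)^{-1} \cdot S\eta$ is forced. No gaps; this matches the paper's argument step for step.
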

\begin{proof}
One of the implications in 1 is trivial; for the other, suppose that $\alpha \cdot \rho = \eta$.  We must show that $\mu \cdot (\alpha \circ \alpha) = \alpha \cdot \lambda$.  But $\lambda$ is an isomorphism with $\lambda^{-1} = \rho S$, and $\mu \cdot (\alpha \circ \alpha) \cdot \rho S = \mu \cdot \alpha T \cdot S \alpha \cdot \rho S = \mu \cdot \alpha T \cdot \rho T \cdot \alpha = \mu \cdot \eta T \cdot \alpha = \alpha$.  Regarding 2, let $\alpha:\SSS \rightarrow \TT$ be a morphism of monads.  By \bref{par:talg}, the 1-cell $T$ carries the structure of an $\SSS$-algebra $(T,\beta)$ where $\beta$ is the composite $ST \xrightarrow{\alpha T} TT \xrightarrow{\mu} T$.  But by \bref{par:idm_mnd}, since $\SSS$ is idempotent, $\beta$ is an isomorphism with inverse $\rho T:T \rightarrow ST$.  Hence by \bref{par:talg}, $\alpha = \beta \cdot S\eta = (\rho T)^{-1} \cdot S\eta$, so we have expressed $\alpha$ in terms of $\SSS$ and $\TT$, showing that $\alpha$ is the unique monad morphism $\SSS \rightarrow \TT$.
\end{proof}

\begin{DefSub}
Let $\TT$ be a $\V$-monad on a $\V$-category $\B$.
\begin{enumerate}
\item If $\tTT$ is an idempotent $\V$-monad on $\B$ for which there exists a (necessarily unique, \bref{thm:morph_on_idm_mnd}) morphism $\iota_\TT:\tTT \rightarrow \TT$ satisfying the following condition, then we say that $\tTT$ is a \textit{terminal idempotent $\V$-monad over $\TT$}:
\begin{enumerate}
\item[] For each morphism of $\V$-monads $\alpha:\SSS \rightarrow \TT$ with $\SSS$ idempotent, there is a unique morphism $\alpha_\sharp:\SSS \rightarrow \tTT$ with $\iota_\TT \cdot \alpha_\sharp = \alpha$.
\end{enumerate}
\item If $\tTT$ is an idempotent $\V$-monad on $\B$ whose underlying endofunctor $\tT$ inverts the same morphisms as $T$, then we say that $\tTT$ is an \textit{idempotent ($\V$-)core of $\TT$}.
\end{enumerate}
\end{DefSub}

\begin{RemSub}
A terminal idempotent $\V$-monad over $\TT$ is equally a terminal object in the category of idempotent $\V$-monads over $\TT$ and so is unique, up to isomorphism, if it exists.  We will see in \bref{thm:charn_idm_approx} that any idempotent core of $\TT$ is in particular a terminal idempotent $\V$-monad over $\TT$.  Hence, an idempotent core of $\TT$ is unique, up to isomorphism, if it exists, in which case any terminal idempotent $\V$-monad over $\TT$ is an idempotent core.
\end{RemSub}

\begin{RemSub}
If for every $\V$-monad $\TT$ on $\B$, the terminal idempotent $\V$-monad over $\TT$ exists, then the full subcategory $\IdmMnd_{\VCAT}(\B)$ of $\Mnd_{\VCAT}(\B)$ consisting of all idempotent $\V$-monads is a coreflective subcategory.
\end{RemSub}

\begin{LemSub} \label{thm:adj_factn_lemma}
Suppose given a $\V$-adjunction $F \nsststile{\varepsilon}{\eta} G : \C \rightarrow \B$ and a $\V$-reflection $K \nsststile{}{\rho} J : \B' \hookrightarrow \B$ such that the image of $G$ lies in $\B'$.  Then there is a $\V$-adjunction $F' \nsststile{\varepsilon'}{\eta'} G':\C \rightarrow \B'$ with $JG' = G$, $F'K \cong F$, $F' = FJ$, $J\eta' = \eta J$, and $\varepsilon' = \varepsilon$.  Hence by \bref{thm:uniq_adj}, the adjunction $F \nsststile{\varepsilon}{\eta} G$ is isomorphic to the composite of the $\V$-adjunctions $K \nsststile{}{\rho} J$ and $F' \nsststile{\varepsilon'}{\eta'} G'$.
\end{LemSub}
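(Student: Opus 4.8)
The plan is to transport the adjunction $F \nsststile{\varepsilon}{\eta} G$ across the reflective inclusion $J$, using that $J$ is fully faithful and that the image of $G$ lands in $\B'$; the only step that genuinely uses the hypothesis beyond merely factoring $G$ will be the isomorphism $F'K \cong F$, and that is where I expect the real work to lie.

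First I would factor the right adjoint. Because $J$ is a full, replete inclusion and $GC \in \B'$ for every $C$, there is a unique $\V$-functor $G' : \C \rightarrow \B'$ with $JG' = G$ (set $G'C := GC$ on objects and use full faithfulness of $J$ on homs). Put $F' := FJ$. Then $F'G' = FJG' = FG$, so the given counit is already a $2$-cell $\varepsilon' := \varepsilon : F'G' \rightarrow 1_\C$, as required.

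Next I would establish $F' \dashv G'$ and identify the unit and counit. For $B' \in \B'$ and $C \in \C$ there is a $\V$-natural isomorphism
$$\C(F'B',C) = \C(FJB',C) \;\cong\; \B(JB',GC) = \B(JB',JG'C) \;\cong\; \B'(B',G'C),$$
the first isomorphism coming from $F \dashv G$ and the second from full faithfulness of $J$ (using $JG' = G$). This exhibits $FJ$ as a left $\V$-adjoint of $G'$ --- e.g.\ via \bref{prop:adjn_via_radj_and_unit}, taking $\eta'_{B'} : B' \rightarrow G'F'B'$ to be the unique morphism with $J\eta'_{B'} = \eta_{JB'}$, which exists since $\eta_{JB'} : JB' \rightarrow GFJB' = JG'F'B'$ lies between objects in the image of $J$. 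As a left $\V$-adjoint inducing a prescribed unit is unique, $F' = FJ$. By construction $J\eta' = \eta J$, and tracing $1_{G'C}$ back through the displayed isomorphism identifies the counit as $\varepsilon' = \varepsilon$.

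Finally I would prove $F'K \cong F$, which is the crux. By part 1 of \bref{thm:refl_subcats_are_orth_subcats}, $\B' = \B_{\Sigma_K}$, and each reflection unit $\rho_B$ lies in $\Sigma_K$ (since $K\rho_B$ is invertible); hence every $GC \in \B'$ satisfies $\rho_B \bot_\V GC$ for all $B$. By part 2 of \bref{prop:orth_and_adj} this yields $F\rho_B \bot_\V C$ for all $C \in \C$, so $F\rho_B \in (\ob\C)^{\top_\V} = \Iso\C$ by \bref{thm:orth_to_all_is_iso}. Thus $F\rho : F \rightarrow FJK = F'K$ is a natural isomorphism, giving $F'K \cong F$. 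The composite of $K \nsststile{}{\rho} J$ with $F' \nsststile{\varepsilon'}{\eta'} G'$ is then an adjunction $F'K \dashv JG'$ whose right adjoint $JG'$ equals $G$; since $F \dashv G$ shares this right adjoint, \bref{thm:uniq_adj} gives the asserted isomorphism of adjunctions (from which $F'K \cong F$ also follows directly). The main obstacle is exactly this last isomorphism: the strict identities $F' = FJ$, $J\eta' = \eta J$, and $\varepsilon' = \varepsilon$ are bookkeeping via full faithfulness of $J$, whereas $F'K \cong F$ is where the hypothesis that $G$ lands in $\B'$ does essential work, forcing $F$ to invert the reflection units.
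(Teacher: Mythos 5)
Your proof is correct, and its core construction is the same as the paper's: $G'$ is the corestriction of $G$, $F' = FJ$, the components of $\eta'$ are those of $\eta$, and $\varepsilon' = \varepsilon$ (the paper just checks the triangular equations directly, whereas you route the verification through \bref{prop:adjn_via_radj_and_unit}; either works). Where you genuinely diverge is in your assessment of where the work lies: you present $F'K \cong F$ as the crux and prove it by a separate orthogonality argument ($\rho_B \in \Sigma_K$, $GC \in \B_{\Sigma_K}$ by \bref{thm:refl_subcats_are_orth_subcats}, hence $F\rho_B \in (\ob\C)^{\top_\V} = \Iso\C$ by \bref{prop:orth_and_adj} and \bref{thm:orth_to_all_is_iso}). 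That argument is valid, but it is redundant: once $JG' = G$ is in hand, the composite adjunction $F'K \dashv JG'$ and $F \dashv G$ share the right adjoint $G$, so \bref{thm:uniq_adj} already gives an isomorphism of adjunctions, of which $F'K \cong F$ is the first component --- exactly as the final sentence of the statement signals, and as you concede parenthetically. So the hypothesis that $G$ lands in $\B'$ does its essential work simply in making $G'$, and hence the factored adjunction, well defined; nothing more is needed. Your orthogonality detour is not wasted insight, though: it is essentially the mechanism that reappears later in \bref{thm:rho_inverted_by_s} and in the conservativity clause of \bref{thm:charn_idm_approx}.
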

\begin{proof}
$G'$ is just the corestriction of $G$, the components of $\eta'$ are just those of $\eta$; the $\V$-naturality of $\eta'$ is immediate, and the triangular equations are readily verified.
\end{proof}

\begin{DefSub} \label{def:adj_factors_through_refl}
Given data as in \bref{thm:adj_factn_lemma}, we say that $F \nsststile{\varepsilon}{\eta} G$ \textit{factors through $\B'$}, and, equivalently, that \textit{$F \nsststile{\varepsilon}{\eta} G$ factors through $K \nsststile{}{\rho} J$}.
\end{DefSub}

\begin{PropSub}\label{thm:charns_adj_factn_via_mnd}
Let $\TT$ be a $\V$-monad, and let $F \nsststile{\varepsilon}{\eta} G : \C \rightarrow \B$ be any $\V$-adjunction inducing $\TT$.  Let $\SSS$ be an idempotent $\V$-monad on $\B$, with associated $\V$-reflection $K \nsststile{}{\rho} J : \B' \hookrightarrow \B$.  Then the following are equivalent:
\begin{enumerate}
\item There exists a \textnormal{(}necessarily unique, \bref{thm:morph_on_idm_mnd}\textnormal{)} morphism of $\V$-monads $\alpha:\SSS \rightarrow \TT$.
\item $\B'$ contains each object $TB$ (with $B \in \B$).
\item $F \nsststile{\varepsilon}{\eta} G$ factors through $K \nsststile{}{\rho} J$.
\end{enumerate}
\end{PropSub}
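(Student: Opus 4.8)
The plan is to prove the three conditions equivalent by establishing the cycle $(1) \Rightarrow (2) \Rightarrow (3) \Rightarrow (1)$. Throughout I use that, by \bref{par:refl_subcats_and_idm_mnds} and the bijection \bref{thm:refl_idemp_mnd}, the $\V$-reflective subcategory associated to $\SSS$ is $\B' = \B^{(\SSS)}$, so an object $X$ lies in $\B'$ precisely when $\rho_X$ is an isomorphism; and, by \bref{thm:refl_subcats_are_orth_subcats}, that $\B'$ is the orthogonality class $\Sigma_K^{\bot_\V}$.

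For $(1) \Rightarrow (2)$, I would not build $\alpha$ by hand but instead exploit the given morphism of $\V$-monads $\alpha:\SSS \rightarrow \TT$ through the algebra machinery of \S\bref{sec:2cat_prelims}. By \bref{par:talg}, the endofunctor $T$, regarded as a $1$-cell $\B \rightarrow \B$, carries the $\SSS$-algebra structure $\beta := \bigl(ST \xrightarrow{\alpha T} TT \xrightarrow{\mu} T\bigr)$. Since $\SSS$ is idempotent, \bref{par:idm_mnd} shows that $\beta$ is an isomorphism with inverse $\rho T$; hence $\rho T:T \rightarrow ST$ is a natural isomorphism, so each component $\rho_{TB}$ is iso and therefore $TB \in \B'$ for every $B$, which is $(2)$.

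For $(2) \Rightarrow (3)$, I would first upgrade the hypothesis ``$TB \in \B'$ for all $B$'' to the statement that the whole image of $G$ lies in $\B'$, and then invoke \bref{thm:adj_factn_lemma} together with \bref{def:adj_factors_through_refl}. The upgrade is the one genuinely non-formal point: a general object $GC$ need not be of the form $TB = GFB$, but the triangle identity $G\varepsilon_C \cdot \eta_{GC} = 1_{GC}$ exhibits $GC$ as a retract of $GFGC = T(GC)$, which lies in $\B'$ by $(2)$; since $\B' = \Sigma_K^{\bot_\V}$ is an orthogonality class it is closed under retracts (a retract of an isomorphism in the arrow category of $\V$ is again an isomorphism), so $GC \in \B'$. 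Finally, for $(3) \Rightarrow (1)$ I would feed the factorization produced by \bref{thm:adj_factn_lemma} into \bref{thm:mnd_morph_from_adj_factn}: writing the composite of $K \nsststile{}{\rho} J$ (which induces $\SSS$) with $F' \dashv G'$, and using $JG' = G$ from \bref{thm:adj_factn_lemma}, the hypothesis ``$GG' = G''$'' of \bref{thm:mnd_morph_from_adj_factn} holds with $G'' = G$, and the conclusion is precisely a monad morphism $\SSS \rightarrow \TT$, i.e. $(1)$.

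The main obstacle is the retract step in $(2) \Rightarrow (3)$: every other implication is a direct application of a named lemma, whereas here one must notice that closure of $\B'$ under retracts is what bridges the gap between ``$\B'$ contains the objects $TB$'' and ``$\B'$ contains the entire image of $G$''. One could instead route $(2) \Rightarrow (1)$ directly by factoring each $\eta_B$ through $\rho_B$ via the reflection, but that would require separately verifying the $\V$-naturality of the resulting $\alpha$, which the algebra argument above sidesteps.
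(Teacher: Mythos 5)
Your proof is correct, but it is organized differently from the paper's. The paper does not prove the cycle $(1)\Rightarrow(2)\Rightarrow(3)\Rightarrow(1)$: it first observes that condition 2 is exactly condition 3 applied to the Kleisli $\V$-adjunction for $\TT$, so that proving $1 \Leftrightarrow 3$ for an arbitrary adjunction inducing $\TT$ yields $1 \Leftrightarrow 2$ as a special case; it then proves $3 \Rightarrow 1$ exactly as you do, via \bref{thm:mnd_morph_from_adj_factn}, and proves $1 \Rightarrow 3$ by a different device, namely by composing the Eilenberg--Moore comparison functor $\C \rightarrow \B^\TT$ with the functor $\B^\alpha:\B^\TT \rightarrow \B^\SSS$ induced by $\alpha$, noting that both commute with the forgetful functors and that $\B^\SSS \cong \B'$, so that each carrier $GC$ lies in $\B'$. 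Your route replaces that Eilenberg--Moore argument by two more elementary steps: for $(1)\Rightarrow(2)$ you reuse the action computation already present in the paper's proof of \bref{thm:morph_on_idm_mnd} (the composite $\beta = \mu \cdot \alpha T$ is an $\SSS$-action on $T$, and idempotency of $\SSS$ forces $\beta^{-1} = \rho T$, so each $\rho_{TB}$ is invertible and $TB \in \B^{(\SSS)} = \B'$), and for $(2)\Rightarrow(3)$ you bridge the genuine gap between ``$\B'$ contains the objects $TB$'' and ``$\B'$ contains the image of $G$'' by the triangle identity $G\varepsilon_C \cdot \eta_{GC} = 1_{GC}$, exhibiting $GC$ as a retract of $T(GC)$, together with closure of the orthogonality class $\B' = \Sigma_K^{\bot_\V}$ under retracts (a retract of an isomorphism in the arrow category of $\V$ is an isomorphism) --- a small lemma not stated in the paper but correctly justified in your sketch. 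Both routes are sound; the paper's is shorter once the comparison functor is invoked and makes the conceptual point that carriers of $\TT$-algebras automatically lie in $\B'$, while yours stays entirely within the formal 2-categorical and orthogonality toolkit of \S\bref{sec:2cat_prelims} and \S\bref{sec:orth_pres_adj}, avoiding Eilenberg--Moore categories altogether (and, incidentally, never needing the paper's passage to underlying ordinary categories).
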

\begin{proof}
Observe that 2 is equivalent to the statement that the Kleisli $\V$-adjunction for $\TT$ factors through $K \nsststile{}{\rho} J$.  Hence it suffices to prove that 1 $\Leftrightarrow$ 3, for then the equivalence 1 $\Leftrightarrow$ 2 follows as a special case.  If 3 holds, then the existence of a morphism of $\V$-monads $\alpha:\SSS \rightarrow \TT$ is guaranteed by \bref{thm:mnd_morph_from_adj_factn}.  For the converse implication, let us assume 1 and prove that 3 holds.  Working with only the underlying ordinary monad morphism and adjunction, note that the given adjunction determines a comparison functor $\C \rightarrow \B^\TT$, and we have also a functor $\B^\alpha:\B^\TT \rightarrow \B^\SSS$ induced by $\alpha$.  Both these functors commute with the forgetful functors to $\B$, and so too does their composite $\C \rightarrow \B^\TT \rightarrow \B^\SSS \cong \B'$.  Hence, applying this composite functor to any given $C \in \C$, we find that the carrier $GC$ of the associated $\SSS$-algebra lies in $\B'$, so 3 holds.  
\end{proof}

\begin{RemSub}
Proposition \bref{thm:charns_adj_factn_via_mnd} shows in particular that the question of whether a  $\V$-adjunction $F \nsststile{\varepsilon}{\eta} G$ factors through a given $\V$-reflection depends only on the $\V$-monad $\TT$ induced by $F \nsststile{\varepsilon}{\eta} G$.
\end{RemSub}

\begin{CorSub}\label{thm:charn_comps_morph_on_idm_mnd}
Suppose $\SSS = (S,\rho,\lambda)$ and $\TT = (T,\eta,\mu)$ are $\V$-monads on a $\V$-category $\B$ with $\SSS$ idempotent, and let $\alpha:\SSS \rightarrow \TT$ be a morphism of $\V$-monads.  Then for each $B \in \B$, $\alpha_B:SB \rightarrow TB$ is the unique morphism such that $\alpha_B \cdot \rho_B = \eta_B$.
\end{CorSub}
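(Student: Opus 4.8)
The plan is to separate the statement into its two assertions — the equation $\alpha_B \cdot \rho_B = \eta_B$ and the \emph{uniqueness} of $\alpha_B$ among all morphisms satisfying it — since the first is formal and the second carries all the content. For the equation I would simply invoke \bref{thm:morph_on_idm_mnd}: as $\alpha:\SSS \rightarrow \TT$ is a morphism of $\V$-monads and $\SSS$ is idempotent, part 1 of that lemma (read in the 2-category $\VCAT$) gives $\alpha \cdot \rho = \eta$ as an equation of $\V$-natural transformations, and evaluating at $B$ yields $\alpha_B \cdot \rho_B = \eta_B$ at once.

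The substance lies in uniqueness, and the decisive step is to show that $TB$ belongs to the $\V$-reflective-subcategory $\B^{(\SSS)}$ determined by $\SSS$, equivalently that $\rho_{TB}$ is an isomorphism \pbref{par:refl_subcats_and_idm_mnds}. To get this I would exploit the idempotent-algebra structure carried by $T$: by \bref{par:talg} the monad morphism $\alpha$ endows the endofunctor $T$ with the structure of an $\SSS$-algebra whose action is the composite $ST \xrightarrow{\alpha T} TT \xrightarrow{\mu} T$, and since $\SSS$ is idempotent, \bref{par:idm_mnd} forces this action to be an isomorphism with inverse $\rho T$. Reading this off componentwise shows that each $\rho_{TB}$ is invertible, so $TB \in \B^{(\SSS)}$. (Alternatively one may cite the implication $1 \Rightarrow 2$ of \bref{thm:charns_adj_factn_via_mnd}, which delivers $TB \in \B'$ directly from the mere existence of $\alpha$.)

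With $TB$ identified as an object of $\B^{(\SSS)}$, the orthogonality machinery finishes the job. By \bref{thm:refl_subcats_are_orth_subcats} the subcategory $\B^{(\SSS)}$ coincides with $\B_\Sigma$ for $\Sigma := \{\rho_{B'} \mid B' \in \B\}$, so membership $TB \in \Sigma^{\bot_\V}$ yields $\rho_B \bot_\V TB$; that is, $\B(\rho_B,TB)$ is an isomorphism in $\V$. Passing to ordinary orthogonality (enriched orthogonality implies the ordinary notion), precomposition with $\rho_B$ becomes a bijection on hom-sets into $TB$, so there is exactly one morphism $SB \rightarrow TB$ composing with $\rho_B$ to give $\eta_B$; since $\alpha_B$ is such a morphism by the first paragraph, it is that unique one. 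The only step I expect to require genuine care — the real crux — is the verification that $TB \in \B^{(\SSS)}$, i.e. that the $\SSS$-algebra action on $T$ is invertible; once that is in hand, everything reduces to a routine application of orthogonality.
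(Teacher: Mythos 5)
Your proposal is correct and follows essentially the same route as the paper's proof: both hinge on showing that $TB$ lies in the $\V$-reflective-subcategory $\B^{(\SSS)}$ determined by $\SSS$, and then identify $\alpha_B$ as the unique extension of $\eta_B$ along the reflection unit component $\rho_B$. The paper obtains the membership $TB \in \B^{(\SSS)}$ by citing \bref{thm:charns_adj_factn_via_mnd} --- precisely your parenthetical alternative --- while your primary argument re-derives it from the $\SSS$-algebra structure on $T$ via \bref{par:talg} and \bref{par:idm_mnd} (the same computation the paper uses inside the proof of \bref{thm:morph_on_idm_mnd}), so the two arguments differ only in which packaging of that fact is invoked.
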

\begin{proof}
By \bref{thm:charns_adj_factn_via_mnd}, we know that $TB$ lies in the reflective subcategory $\B' \hookrightarrow \B$ determined by $\SSS$, so $\alpha_B$ is the unique extension of $\eta_B:B \rightarrow TB$ along the reflection unit component $\rho_B:B \rightarrow SB$.
\end{proof}

\begin{ParSub} \label{def:cats_idmmnd_refl_defn}
Given a $\V$-category $\B$, the class $\Refl_\V(\B)$ \pbref{thm:refl_idemp_mnd} of all $\V$-reflections on $\B$ acquires the structure of a preordered class when ordered by inclusion of the associated $\V$-reflective-subcategories.
\end{ParSub}

\begin{CorSub} \label{thm:refls_for_t_iso_to_idm_mnds_over_q}
Let $\B$ be a $\V$-category.
\begin{enumerate}
\item The full subcategory $\IdmMnd_{\VCAT}(\B)$ of $\Mnd_{\VCAT}(\B)$ is a preordered class isomorphic to $(\Refl_\V(\B))^\op$ via the bijection given in \bref{thm:refl_idemp_mnd}.
\item Given a $\V$-monad $\TT$ on $\B$, the isomorphism in 1 restricts to an isomorphism between the full subcategories determined by the following objects:
\begin{enumerate}
\item Idempotent $\V$-monads $\SSS$ on $\B$ for which a \textnormal{(}necessarily unique, \bref{thm:morph_on_idm_mnd}\textnormal{)} morphism of $\V$-monads $\alpha:\SSS \rightarrow \TT$ exists.
\item $\V$-reflections on $\B$ whose associated $\V$-reflective-subcategory contains each object $TB$ ($B \in \B$).
\end{enumerate}
\end{enumerate}
\end{CorSub}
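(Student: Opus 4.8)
The plan is to handle part 1 by first checking that $\IdmMnd_{\VCAT}(\B)$ is genuinely a preordered class, and then verifying that the object-bijection of \bref{thm:refl_idemp_mnd} reverses the two orderings. For the first point, note that when $\SSS$ is idempotent a monad morphism out of $\SSS$ is unique if it exists, by \bref{thm:morph_on_idm_mnd}; hence between any two objects of $\IdmMnd_{\VCAT}(\B)$ there is at most one morphism, so this full subcategory is a preorder, with $\SSS \le \SSS'$ exactly when a (necessarily unique) monad morphism $\SSS \to \SSS'$ exists. Since \bref{thm:refl_idemp_mnd} already supplies a bijection on objects sending $\SSS$ to the reflection onto $\B^{(\SSS)}$, and since an isomorphism of preorders is nothing more than an order-preserving, order-reflecting bijection, it remains only to compare the two orders.

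The crux is to show that a monad morphism $\SSS \to \SSS'$ exists if and only if $\B^{(\SSS')} \subseteq \B^{(\SSS)}$, where $\SSS' = (S',\rho',\lambda')$. I would apply \bref{thm:charns_adj_factn_via_mnd} with the ambient monad $\TT$ taken to be $\SSS'$ itself --- a legitimate choice, since $\SSS'$, being idempotent, is induced by its own reflection --- and with the idempotent monad there named $\SSS$ taken to be our $\SSS$. The equivalence $1 \Leftrightarrow 2$ of that proposition then reads: a monad morphism $\SSS \to \SSS'$ exists iff $\B^{(\SSS)}$ contains every object $S'B$ ($B \in \B$). It remains to identify this with the inclusion $\B^{(\SSS')} \subseteq \B^{(\SSS)}$, for which I use the standard description of the reflective subcategory of an idempotent monad (\bref{par:refl_subcats_and_idm_mnds}, \bref{par:idm_mnd}): since $\rho'_{S'B}$ is invertible, each $S'B$ lies in $\B^{(\SSS')}$, and conversely each $C \in \B^{(\SSS')}$ satisfies $C \cong S'C$; as $\B^{(\SSS)}$ is replete, ``$\B^{(\SSS)}$ contains every $S'B$'' is therefore equivalent to ``$\B^{(\SSS)} \supseteq \B^{(\SSS')}$''. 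Recalling from \bref{def:cats_idmmnd_refl_defn} that $\Refl_\V(\B)$ is ordered by inclusion of reflective subcategories, this says precisely that $\SSS \le \SSS'$ in $\IdmMnd_{\VCAT}(\B)$ holds iff the reflection for $\SSS'$ lies below that for $\SSS$ in $\Refl_\V(\B)$; i.e. the bijection reverses order, establishing the isomorphism $\IdmMnd_{\VCAT}(\B) \cong (\Refl_\V(\B))^\op$ of part 1.

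For part 2, fix $\TT$. The equivalence $1 \Leftrightarrow 2$ of \bref{thm:charns_adj_factn_via_mnd}, applied now with this $\TT$ and a general idempotent $\SSS$, states directly that $\SSS$ admits a monad morphism $\SSS \to \TT$ iff its reflective subcategory $\B^{(\SSS)}$ contains each $TB$. Thus the object-correspondence of part 1 carries the class (a) of such $\SSS$ bijectively onto the class (b) of $\V$-reflections whose subcategory contains every $TB$. As both (a) and (b) are full subcategories cut out by conditions on objects, and the map in part 1 is already an isomorphism of the ambient preorders, restricting it to these corresponding object-classes yields the asserted isomorphism of full subcategories.

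I expect the only real subtlety to be the identification in the second paragraph of ``$\B^{(\SSS)}$ contains every $S'B$'' with the subcategory inclusion $\B^{(\SSS')} \subseteq \B^{(\SSS)}$; this rests on the fact that $\B^{(\SSS')}$ is the repletion of the image of $S'$, which must be invoked with care. Everything else is bookkeeping at the level of preordered classes, where an isomorphism is determined by its effect on objects together with the order-reversal just verified.
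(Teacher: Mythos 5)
Your proof is correct and takes essentially the same route as the paper: both rest on Lemma \bref{thm:morph_on_idm_mnd} (to see that $\IdmMnd_{\VCAT}(\B)$ is a preorder) and on Proposition \bref{thm:charns_adj_factn_via_mnd} instantiated with the ambient monad taken to be the second idempotent monad $\SSS'$, and both deduce part 2 directly from that same proposition applied to the given $\TT$. The only difference is which leg of the proposition's three-way equivalence carries the order-reversal: the paper identifies the inclusion order on $\Refl_\V(\B)$ with the ``factors through'' relation (condition 3 there), whereas you identify it with containment of the objects $S'B$ (condition 2 there) via your repleteness/idempotency bridge --- two equally short detours to the same conclusion.
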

\begin{proof}
We shall prove 1, and then 2 follows by \bref{thm:charns_adj_factn_via_mnd}.  By \bref{thm:morph_on_idm_mnd}, $\IdmMnd_\V(\B)$ is a preorder, and it suffices to show that the bijection $\Refl_\V(\B) \rightarrow \IdmMnd_\V(\B)$ \pbref{thm:refl_idemp_mnd} and its inverse are contravariantly functorial (i.e. order-reversing).  But this follows from \bref{thm:charns_adj_factn_via_mnd}, since the given preorder relation on $\Refl_\V(\B)$ may equally be described as
$$(K' \nsststile{}{\rho'} J') \lt (K \nsststile{}{\rho} J)\;\;\Longleftrightarrow\;\;\text{$K'\nsststile{}{\rho'} J'$ factors through $K \nsststile{}{\rho} J$}\;.$$
\end{proof}

\begin{ThmSub}\label{thm:charn_idm_corefl}
Let $\TT$ be a $\V$-monad on a $\V$-category $\B$, and let $F \dashv G:\C \rightarrow \B$ be any $\V$-adjunction inducing $\TT$.  Then the following are equivalent:
\begin{enumerate}
\item The terminal idempotent $\V$-monad over $\TT$ exists.
\item The full sub-$\V$-category $T(\ob\B) \hookrightarrow \B$ has a $\V$-reflective hull.
\item The full sub-$\V$-category $G(\ob\C) \hookrightarrow \B$ has a $\V$-reflective hull.
\item There is a smallest $\V$-reflective-subcategory through which $F \dashv G$ factors.
\end{enumerate}
\end{ThmSub}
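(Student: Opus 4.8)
The plan is to identify all four conditions with the single assertion that $\B$ possesses a \emph{smallest} $\V$-reflective-subcategory containing every object $TB$ ($B \in \B$); since by definition the $\V$-reflective hull of a class is the smallest $\V$-reflective-subcategory containing it, this assertion is literally condition 2 (the hull of $T(\ob\B)$). I would therefore take 2 as a hub and establish $2 \Leftrightarrow 4$, $2 \Leftrightarrow 3$, and $2 \Leftrightarrow 1$ separately, each time reducing to a statement about which $\V$-reflective-subcategories contain $T(\ob\B)$.

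The equivalence $2 \Leftrightarrow 4$ is the most direct. By \bref{thm:charns_adj_factn_via_mnd}, a $\V$-reflection $K \nsststile{}{\rho} J : \B' \hookrightarrow \B$ is one through which $F \dashv G$ factors if and only if $\B'$ contains every $TB$. Thus the $\V$-reflective-subcategories through which $F \dashv G$ factors are exactly those containing $T(\ob\B)$, so a smallest one among them is precisely the $\V$-reflective hull of $T(\ob\B)$.

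For $2 \Leftrightarrow 3$, I would show that a $\V$-reflective-subcategory $\B'$ of $\B$ contains $T(\ob\B)$ if and only if it contains $G(\ob\C)$; the two hulls then exist together and coincide. Since $TB = GFB$, we have $T(\ob\B) \subseteq G(\ob\C)$, giving one direction. Conversely, each $GC$ ($C \in \C$) is a retract of $GFGC = T(GC)$ via $\eta_{GC}$ and $G\varepsilon_C$ (a triangle identity), and every $\V$-reflective-subcategory is of the form $\B_\Sigma = \Sigma^{\bot_\V}$ by \bref{thm:refl_subcats_are_orth_subcats} and hence closed under retracts---indeed $f \bot_\V Y$ forces $f \bot_\V X$ whenever $X$ is a retract of $Y$, as a retract of an isomorphism in the arrow category of $\V$ is again an isomorphism. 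So a $\V$-reflective-subcategory containing $T(\ob\B)$ contains each $GC$, and the two classes of subcategories coincide.

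Finally, for $2 \Leftrightarrow 1$ I would argue order-theoretically. A terminal idempotent $\V$-monad over $\TT$ is nothing but a top element of the preorder of those idempotent $\V$-monads $\SSS$ that admit a (necessarily unique, \bref{thm:morph_on_idm_mnd}) morphism $\SSS \rightarrow \TT$: the compatibility requirement $\iota_\TT \cdot \alpha_\sharp = \alpha$ is automatic, since any two monad morphisms out of an idempotent $\SSS$ into $\TT$ agree by \bref{thm:morph_on_idm_mnd}. By \bref{thm:refls_for_t_iso_to_idm_mnds_over_q} this preorder is anti-isomorphic to the preorder of $\V$-reflections whose subcategory contains every $TB$, ordered by inclusion; under the anti-isomorphism a top element corresponds to a bottom element, that is, to a smallest $\V$-reflective-subcategory containing $T(\ob\B)$. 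Hence condition 1 holds exactly when condition 2 does. The step I expect to require the most care is this last one: one must check that the full universal property of the terminal idempotent $\V$-monad collapses to plain maximality in the relevant preorder, and it is precisely the rigidity of morphisms between idempotent monads (\bref{thm:morph_on_idm_mnd}) that licenses this collapse and lets the anti-isomorphism of \bref{thm:refls_for_t_iso_to_idm_mnds_over_q} finish the job.
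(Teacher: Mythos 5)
Your proof is correct, and its skeleton is essentially the paper's own: the paper's proof is a two-line affair, observing that a terminal idempotent $\V$-monad over $\TT$ is by definition a terminal object of the preorder of idempotent $\V$-monads admitting a morphism to $\TT$, and then citing \bref{thm:refls_for_t_iso_to_idm_mnds_over_q}~2 (the order anti-isomorphism with reflections containing $T(\ob\B)$, which is your $1 \Leftrightarrow 2$) together with \bref{thm:charns_adj_factn_via_mnd} (your $2 \Leftrightarrow 4$). The one place you genuinely diverge is $2 \Leftrightarrow 3$. The paper gets this from \bref{thm:charns_adj_factn_via_mnd} as well, because condition~3 there (``$F \dashv G$ factors through $K \dashv J$'') is, by Definition \bref{def:adj_factors_through_refl} and Lemma \bref{thm:adj_factn_lemma}, nothing other than the containment $G(\ob\C) \subseteq \B'$; so conditions 3 and 4 of the theorem concern literally the same class of reflective subcategories, and the link to $T(\ob\B)$ is the equivalence $2 \Leftrightarrow 3$ of that proposition, whose proof goes through the Eilenberg--Moore comparison functor. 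You instead prove the containment equivalence directly: $T(\ob\B) \subseteq G(\ob\C)$ since $TB = GFB$, and conversely each $GC$ is a retract of $TGC = GFGC$ via the triangle identity, while reflective subcategories, being $\V$-orthogonality classes $\Sigma^{\bot_\V}$ by \bref{thm:refl_subcats_are_orth_subcats}, are closed under retracts (a retract of an isomorphism in the arrow category of $\V$ is an isomorphism). This retract argument is correct and buys a self-contained, elementary replacement for the comparison-functor machinery hidden inside \bref{thm:charns_adj_factn_via_mnd}. A further small merit: your explicit check that the compatibility condition $\iota_\TT \cdot \alpha_\sharp = \alpha$ is automatic, by the uniqueness clause of \bref{thm:morph_on_idm_mnd}, justifies the identification of condition~1 with a terminal object of the preorder \bref{thm:refls_for_t_iso_to_idm_mnds_over_q}~2(a), a point the paper asserts ``by definition'' and leaves implicit.
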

Further, given an idempotent $\V$-monad $\tTT$ on $\B$ with associated $\V$-reflective-subcategory $\B'$, $\tTT$ is a terminal idempotent $\V$-monad over $\TT$ if and only if $\B'$ is a $\V$-reflective hull of $T(\ob\B)$, resp. $G(\ob\C)$, equivalently, a smallest $\V$-reflective-subcategory through which $F \dashv G$ factors.
\begin{proof}
A terminal idempotent $\V$-monad over $\TT$ is by definition a terminal object of the preordered class described in \bref{thm:refls_for_t_iso_to_idm_mnds_over_q} 2(a), so the result follows from \bref{thm:refls_for_t_iso_to_idm_mnds_over_q} 2 and \bref{thm:charns_adj_factn_via_mnd}.
\end{proof}

\begin{LemSub}\label{thm:all_ladjs_ind_given_monad_inv_same_morphs}
Given an (ordinary) adjunction $F \nsststile{}{\eta} G:\C \rightarrow \B$ with induced endofunctor $T$ on $\B$, $\Sigma_F = \Sigma_T$.  Hence all left adjoints inducing a given monad $\TT$ invert the same morphisms.
\end{LemSub}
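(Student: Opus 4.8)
The plan is to establish the equality of classes $\Sigma_F = \Sigma_T$ by proving the two inclusions separately, exploiting the identity $T = GF$ that links the two functors directly. The inclusion $\Sigma_F \subseteq \Sigma_T$ is immediate: if $Ff$ is an isomorphism in $\C$, then $Tf = GFf$ is an isomorphism in $\B$, since any functor---in particular $G$---preserves isomorphisms.

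The content of the lemma lies in the reverse inclusion $\Sigma_T \subseteq \Sigma_F$. Here I would take a morphism $f:B_1 \to B_2$ in $\B$ for which $Tf = GFf$ is an isomorphism, say with inverse $k:GFB_2 \to GFB_1$, and construct an explicit two-sided inverse of $Ff:FB_1 \to FB_2$ by passing through the adjunction. Concretely, I would let $h:FB_2 \to FB_1$ be the transpose of the composite $B_2 \xrightarrow{\eta_{B_2}} GFB_2 \xrightarrow{k} GFB_1$, so that $h$ is characterized by $Gh \cdot \eta_{B_2} = k \cdot \eta_{B_2}$. To verify that $h$ inverts $Ff$ on both sides, I would compare transposes, using that the adjunction bijection between morphisms $FB_1 \to FB_1$ and morphisms $B_1 \to GFB_1$ (respectively between $FB_2 \to FB_2$ and $B_2 \to GFB_2$) is injective. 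The transpose of $h \cdot Ff$ is $Gh \cdot GFf \cdot \eta_{B_1}$; rewriting $GFf \cdot \eta_{B_1} = \eta_{B_2} \cdot f$ by naturality of $\eta$ and then applying the defining property of $h$ yields $k \cdot \eta_{B_2} \cdot f = k \cdot GFf \cdot \eta_{B_1} = \eta_{B_1}$, which is precisely the transpose of $1_{FB_1}$; hence $h \cdot Ff = 1_{FB_1}$. Symmetrically, the transpose of $Ff \cdot h$ is $GFf \cdot k \cdot \eta_{B_2} = \eta_{B_2}$, the transpose of $1_{FB_2}$, so $Ff \cdot h = 1_{FB_2}$, and $Ff$ is invertible, placing $f$ in $\Sigma_F$.

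The only real difficulty I anticipate is keeping the transposition bookkeeping straight; the essential inputs are the naturality of the unit $\eta$ and the fact that $k$ inverts $GFf$, which together make the two identities collapse. Finally, the concluding assertion is a formal consequence of the displayed equality: if $F \dashv G$ and $F' \dashv G'$ both induce a given monad $\TT$, then they share its underlying endofunctor $T$, whence $\Sigma_F = \Sigma_T = \Sigma_{F'}$, so the two left adjoints invert exactly the same morphisms.
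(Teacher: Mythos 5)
Your proposal is correct and follows essentially the same route as the paper: the paper likewise notes one inclusion is immediate and, for the other, takes the inverse of $Ff$ to be the transpose of $B_2 \xrightarrow{\eta_{B_2}} GFB_2 \xrightarrow{(Tf)^{-1}} GFB_1$, exactly your morphism $h$. The only difference is that you spell out the transposition bookkeeping that the paper dismisses with ``one easily shows,'' and your verification of those identities is accurate.
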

\begin{proof}
One inclusion is immediate.  For the other, suppose $f:B \rightarrow B'$ in $\B$ is inverted by $T$.  Then $Tf$ has an inverse $(Tf)^{-1}:TB' \rightarrow TB$, and one easily shows that the transpose $FB' \rightarrow FB$ of the composite $B' \xrightarrow{\eta_{B'}} GFB' \xrightarrow{(Tf)^{-1}} GFB$ under the given adjunction serves as inverse for $Ff$.
\end{proof}

\begin{CorSub}\label{thm:morphs_inv_by_t_vorth_pair}
Let $\TT$ be a $\V$-monad on a $\V$-category $\B$.  Then $(\Sigma_T,\Sigma_T^{\downarrow_\V})$ is a $\V$-prefactorization-system on $\B$, and $(\Sigma_T,\Sigma_T^{\bot_\V})$ is a $\V$-orthogonal-pair in $\B$.
\end{CorSub}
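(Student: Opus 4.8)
The plan is to reduce the entire statement to \bref{thm:charns_of_sigma_p} by exhibiting a single $\V$-adjunction that induces $\TT$ and whose left adjoint inverts exactly $\Sigma_T$. First I would invoke the Kleisli $\V$-adjunction $F \dashv G : \B_\TT \rightarrow \B$ associated to $\TT$; this exists for any $\V$-monad (the Kleisli $\V$-category has the same objects as $\B$ and hom-objects $\B(B,TB')$, requiring no limits or colimits in $\V$) and induces $\TT$. Any other $\V$-adjunction inducing $\TT$ would serve equally well here, precisely because \bref{thm:all_ladjs_ind_given_monad_inv_same_morphs} shows that the class of morphisms inverted by the left adjoint depends only on $\TT$.

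Next I would apply \bref{thm:all_ladjs_ind_given_monad_inv_same_morphs} to the underlying ordinary adjunction of $F \dashv G$, whose induced endofunctor is the underlying ordinary functor of $T$, to obtain $\Sigma_F = \Sigma_T$. The one point worth noting is that the notion of a morphism being \emph{inverted} (i.e.\ sent to an isomorphism) is insensitive to whether $F$ is regarded as a $\V$-functor or merely as its underlying ordinary functor, so the enriched and ordinary readings of $\Sigma_F$ coincide and the cited lemma transfers without adjustment.

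Finally I would feed the $\V$-adjunction $F \dashv G$ into \bref{thm:charns_of_sigma_p}, which already records that $(\Sigma_F,\Sigma_F^{\downarrow_\V})$ is a $\V$-prefactorization-system and $(\Sigma_F,\Sigma_F^{\bot_\V})$ is a $\V$-orthogonal-pair on $\B$. Substituting the identity $\Sigma_F = \Sigma_T$ established in the previous step yields both assertions at once. I do not anticipate any genuine obstacle: the substantive work has already been isolated, on the one hand in \bref{thm:charns_of_sigma_p} (which rests on the orthogonality–adjunction transfer of \bref{prop:orth_and_adj} together with \bref{thm:orth_to_all_is_iso}), and on the other in \bref{thm:all_ladjs_ind_given_monad_inv_same_morphs} (the independence of $\Sigma_F$ from the chosen resolution of $\TT$). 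The only care required is to name a concrete $\V$-adjunction inducing $\TT$, for which the Kleisli construction is the cleanest choice.
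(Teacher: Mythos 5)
Your proposal is correct and is essentially the paper's own proof: the paper likewise takes a $\V$-adjunction inducing $\TT$ (citing the Kleisli $\V$-adjunction as the example), invokes \bref{thm:all_ladjs_ind_given_monad_inv_same_morphs} to get $\Sigma_F = \Sigma_T$, and concludes via \bref{thm:charns_of_sigma_p}. Your extra remark that ``inverted by $F$'' is insensitive to whether $F$ is read as a $\V$-functor or as its underlying ordinary functor is a fine point the paper leaves implicit, but it introduces no divergence in the argument.
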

\begin{proof}
Taking any $\V$-adjunction $F \dashv G$ inducing $\TT$ (e.g., the Kleisli $\V$-adjunction), we have that $\Sigma_T = \Sigma_F$ by \bref{thm:all_ladjs_ind_given_monad_inv_same_morphs}, and the result follows from \bref{thm:charns_of_sigma_p}.
\end{proof}

\begin{ThmSub}\label{thm:charn_idm_approx}
Let $\TT$ be a $\V$-monad on a $\V$-category $\B$, and let $F \dashv G:\C \rightarrow \B$ be any $\V$-adjunction inducing $\TT$.  Then the following are equivalent:
\begin{enumerate}
\item The idempotent core of $\TT$ exists.
\item The terminal idempotent $\V$-monad over $\TT$ exists, and its underlying endofunctor inverts the same morphisms as $T$.
\item $\B_{\Sigma_T}(= \Sigma_T^{\bot_\V})$ is a $\V$-reflective-subcategory of $\B$.
\item $F \dashv G$ factors through a $\V$-reflection $K \dashv J:\B' \hookrightarrow \B$ in such a way that the induced left $\V$-adjoint $F':\B' \rightarrow \C$ \pbref{def:adj_factors_through_refl} is conservative (i.e. reflects isomorphisms).
\end{enumerate}
Further, given an idempotent $\V$-monad $\tTT$ on $\B$ with associated $\V$-reflective-subcategory $\B'$, the following are equivalent: (i) $\tTT$ is an idempotent core of $\TT$, (ii) $\tTT$ is a terminal idempotent $\V$-monad over $\TT$ and inverts the same morphisms as $T$, (iii) $\B' = \B_{\Sigma_T}$, (iv) $F \dashv G$ factors through $\B' \hookrightarrow \B$ in such a way that the induced left $\V$-adjoint $F'$ is conservative.
\end{ThmSub}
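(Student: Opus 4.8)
The plan is to first establish the four pointwise equivalences (i)--(iv) for a fixed idempotent $\V$-monad $\tTT$ with associated $\V$-reflective-subcategory $\B'$, and only then to deduce the existential statement (1)--(4) by quantifying over $\tTT$, using the uniqueness of the terminal idempotent $\V$-monad over $\TT$ together with the bijection of \bref{thm:refl_idemp_mnd}. Throughout I would write $K \dashv J : \B' \hookrightarrow \B$ for the $\V$-reflection determined by $\tTT$, so that $\Sigma_{\tT} = \Sigma_K$ by \bref{thm:all_ladjs_ind_given_monad_inv_same_morphs}, while \bref{thm:refl_subcats_are_orth_subcats} gives both $\B' = \B_{\Sigma_{\tT}}$ and that $(\Sigma_{\tT},\B')$ is a $\V$-orthogonal-pair. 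On the other side, \bref{thm:morphs_inv_by_t_vorth_pair} supplies that $(\Sigma_T,\B_{\Sigma_T})$ is a $\V$-orthogonal-pair, and $\Sigma_F = \Sigma_T$ by \bref{thm:all_ladjs_ind_given_monad_inv_same_morphs}.

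The equivalence (i) $\Leftrightarrow$ (iii) I would obtain as a formal consequence of the orthogonality Galois connection. Condition (i) is exactly $\Sigma_{\tT} = \Sigma_T$, and (iii) is exactly $\B' = \B_{\Sigma_T}$, i.e. $\B_{\Sigma_{\tT}} = \B_{\Sigma_T}$. The forward direction is immediate, since equal classes of morphisms determine equal orthogonal subcategories. For the converse, I note that both $(\Sigma_{\tT},\B')$ and $(\Sigma_T,\B_{\Sigma_T})$ are $\V$-orthogonal-pairs, so each left-hand class is recovered as the $(-)^{\top_\V}$ of its right-hand subcategory; hence $\B' = \B_{\Sigma_T}$ forces $\Sigma_{\tT} = (\B')^{\top_\V} = (\B_{\Sigma_T})^{\top_\V} = \Sigma_T$.

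To bring in (iv) I would argue (iii) $\Rightarrow$ (iv) $\Rightarrow$ (i). Assuming (iii), and hence (i), the inclusion $G(\ob\C) \subseteq \Sigma_T^{\bot_\V} = \B_{\Sigma_T} = \B'$ holds because $(G(\ob\C))^{\top_\V} = \Sigma_F = \Sigma_T$ by \bref{thm:charns_of_sigma_p}; in particular each $TB = GFB$ lies in $\B'$, so by \bref{thm:charns_adj_factn_via_mnd} the adjunction $F \dashv G$ factors through $K \dashv J$, yielding $F' \dashv G'$ with $F'K \cong F$ as in \bref{thm:adj_factn_lemma}. That $F'$ is conservative I would verify from $\Sigma_F = \Sigma_K$ (which follows from $\Sigma_T = \Sigma_{\tT}$): for a morphism $g$ in $\B'$ with $F'g$ iso, the relations $F \cong F'K$ and $KJ \cong \id_{\B'}$ give $FJg \cong F'g$, so $Jg \in \Sigma_F = \Sigma_K$, whence $KJg \cong g$ is iso. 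Conversely, for (iv) $\Rightarrow$ (i), the factorization gives $F \cong F'K$, so $f \in \Sigma_F$ iff $F'Kf$ is iso; conservativity of $F'$ makes this equivalent to $Kf$ being iso, i.e. to $f \in \Sigma_K = \Sigma_{\tT}$, giving $\Sigma_T = \Sigma_F = \Sigma_{\tT}$, which is (i).

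For (i) $\Leftrightarrow$ (ii), the implication (ii) $\Rightarrow$ (i) is trivial, and for (i) $\Rightarrow$ (ii) I would appeal to \bref{thm:charn_idm_corefl}: since (i) gives (iii), the subcategory $\B' = \B_{\Sigma_T} = \B_{\Sigma_F}$ is $\V$-reflective, so by \bref{thm:sigma_perp_refl_impl_reflhull_imageq} it is the $\V$-reflective hull of $G(\ob\C)$, whence by the final clause of \bref{thm:charn_idm_corefl} the monad $\tTT$ is a terminal idempotent $\V$-monad over $\TT$; together with (i) this is (ii). Finally, the main equivalence (1)--(4) follows by quantifying these pointwise statements over $\tTT$: an idempotent core is precisely a $\tTT$ satisfying (i), statement (3) asserts the reflectivity of $\B_{\Sigma_T}$, which by \bref{thm:refl_idemp_mnd} is the datum of such a $\tTT$ satisfying (iii), and the uniqueness of the terminal idempotent $\V$-monad over $\TT$ reconciles the existential phrasing of (2) and (4) with the pointwise conditions. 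I expect the main obstacle to lie in the conservativity clause (iv): one must keep careful track of which category each class of morphisms inhabits and exploit $KJ \cong \id_{\B'}$ to transport the identity $\Sigma_F = \Sigma_K$ across the factorization, whereas the remaining equivalences are formal consequences of the orthogonality Galois connection and the reflective-hull results already established.
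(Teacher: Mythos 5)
Your proof is correct and follows essentially the same route as the paper's: both establish the pointwise equivalences (i)--(iv) first using the orthogonal-pair Galois connection, the lemma $\Sigma_F = \Sigma_T$, the reflective-hull results \bref{thm:sigma_perp_refl_impl_reflhull_imageq} and \bref{thm:charn_idm_corefl} for terminality, and the conservativity argument via $F' = FJ$ and $KJ \cong 1_{\B'}$, then deduce (1)--(4) by quantifying over $\tTT$. The only difference is cosmetic: you arrange the implications as (i) $\Leftrightarrow$ (iii), (iii) $\Rightarrow$ (iv) $\Rightarrow$ (i), (i) $\Leftrightarrow$ (ii), while the paper proves (ii) $\Rightarrow$ (i), (i) $\Rightarrow$ (iii), (iv) $\Rightarrow$ (iii), and (iii) $\Rightarrow$ (ii), (iv) --- the same arguments in a slightly different cycle.
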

\begin{proof}
We prove the equivalence of (i)-(iv), from which the equivalence of 1-4 follows.  Let $K \dashv J:\B' \rightarrow \B$ be the $\V$-reflection determined by $\tTT$.  The implication (ii) $\Rightarrow$ (i) is immediate.  To see that (i) $\Rightarrow$ (iii), observe that if (i) holds then $\Sigma_T = \Sigma_{\tT} = \Sigma_K$, and so $\B_{\Sigma_T} = \B_{\Sigma_K} = \B'$ by \bref{thm:refl_subcats_are_orth_subcats} 1.  Regarding the implication (iv) $\Rightarrow$ (iii), we reason that if (iv) holds then we have that $\Sigma_K = \Sigma_{F'K}$ since $F'$ is conservative, but $F'K \cong F$ by \bref{thm:adj_factn_lemma} and $\Sigma_F = \Sigma_T$ by \bref{thm:all_ladjs_ind_given_monad_inv_same_morphs}; therefore $\Sigma_K = \Sigma_{F'K} = \Sigma_F = \Sigma_T$, and using \bref{thm:refl_subcats_are_orth_subcats} 1 we deduce that $\B' = \B_{\Sigma_K} = \B_{\Sigma_T}$.  It now suffices to prove the implications (iii) $\Rightarrow$ (ii) and (iii) $\Rightarrow$ (iv).  Assuming (iii), we have that $\B' = \B_{\Sigma_T} = \B_{\Sigma_F} = \Sigma_F^{\bot_\V}$, since $\Sigma_T = \Sigma_F$ by \bref{thm:all_ladjs_ind_given_monad_inv_same_morphs}.  Hence since $\B'$ is a $\V$-reflective-subcategory of $\B$ we deduce by \bref{thm:sigma_perp_refl_impl_reflhull_imageq} that the $\V$-reflective-hull of $G(\ob\C)$ exists and equals $\B'$, so by \bref{thm:charn_idm_corefl}, $\tTT$ is a terminal idempotent $\V$-monad over $\TT$ and $F \dashv G$ factors through $K \dashv J$.  Using \bref{thm:refl_subcats_are_orth_subcats} 1, we know that $\Sigma_K^{\bot_\V} = \B' = \Sigma_F^{\bot_\V}$, so since $(\Sigma_K,\Sigma_K^{\bot_\V})$ and $(\Sigma_F,\Sigma_F^{\bot_\V})$ are $\V$-orthogonal-pairs by \bref{thm:charns_of_sigma_p}, we deduce that $\Sigma_{\tT} = \Sigma_K = \Sigma_F = \Sigma_T$.  But from this it follows also that $F':\B' \rightarrow \C$ is conservative, since if $F'f$ is iso (for some morphism $f$ in $\B'$), then since $F' = FJ$ \pbref{thm:adj_factn_lemma} and $\Sigma_F = \Sigma_K$ we find that $KJf$ is iso, but $KJ \cong 1_{\B'}$ and hence $f$ is iso.
\end{proof}

\begin{PropSub}\label{prop:underlying_ord_mnd_of_idm_approx_is_idm_approx}
Let $\tTT$ be an idempotent $\V$-core of a $\V$-monad $\TT$ on $\B$.  Then the underlying ordinary monad of $\tTT$ is an idempotent core of the underlying ordinary monad of $\TT$.  Hence, whereas in general, $\V$-orthogonality implies ordinary orthogonality, we have in this case an equation $\Sigma_T^{\bot_\V} = \Sigma_T^{\bot}$.
\end{PropSub}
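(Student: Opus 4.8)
The first assertion is essentially formal, and the displayed equation is then deduced from it via the characterization of the associated reflective subcategory in \bref{thm:charn_idm_approx}. For the first assertion, recall that a $\V$-monad is idempotent exactly when its multiplication is invertible (\bref{par:idm_mnd}); as this is a condition on the underlying $2$-cell, it passes to underlying ordinary categories, so the underlying ordinary monad of $\tTT$ is again idempotent. Moreover the class $\Sigma_{(-)}$ of morphisms inverted by a $\V$-functor is by convention computed in the underlying ordinary category (\bref{sec:prel_enr}), so the $\V$-core hypothesis $\Sigma_{\tT} = \Sigma_T$ says precisely that the underlying endofunctor of $\tTT$ inverts the same ordinary morphisms as that of $\TT$. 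Hence the underlying ordinary monad of $\tTT$ is an idempotent core of the underlying ordinary monad of $\TT$.

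For the equation I would exploit \bref{thm:charn_idm_approx} in both the enriched and the ordinary setting. Let $\B'$ be the $\V$-reflective-subcategory of $\B$ determined by $\tTT$. Since $\tTT$ is an idempotent core of $\TT$, the concluding equivalences of \bref{thm:charn_idm_approx} give $\B' = \B_{\Sigma_T}$, so that $\Ob\B' = \Sigma_T^{\bot_\V}$ by \bref{def:orth_subcat_sigma}. Applying instead the $\V = \SET$ instance of \bref{thm:charn_idm_approx} to the underlying ordinary monad of $\TT$, whose idempotent core is the underlying ordinary monad of $\tTT$ by the first assertion, we find that the associated ordinary reflective subcategory $\B'_0$ satisfies $\Ob\B'_0 = \Sigma_T^{\bot}$. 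The point is now that $\B'$ and $\B'_0$ have the same objects: by \bref{par:refl_subcats_and_idm_mnds} each is carved out as the class of objects $B$ for which the self-same unit component $\rho_B$ of $\tTT$ is an isomorphism. Chaining the identifications yields $\Sigma_T^{\bot_\V} = \Ob\B' = \Ob\B'_0 = \Sigma_T^{\bot}$. The inclusion $\Sigma_T^{\bot_\V} \subseteq \Sigma_T^{\bot}$ is in any case automatic, since $\V$-orthogonality implies ordinary orthogonality; the real content is the reverse.

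The step I expect to need the most care is the legitimacy of specializing \bref{thm:charn_idm_approx} to the underlying ordinary data, together with the matching of $\B'$ and $\B'_0$ on objects, i.e.\ the assertion that passing to underlying ordinary categories commutes with forming the reflection attached to an idempotent monad. Should one wish to sidestep the ordinary theorem, the reverse inclusion can be shown directly: for $B \in \Sigma_T^{\bot}$ one has $\rho_B \in \Sigma_{\tT} = \Sigma_T$ (as $\tT\rho_B = \lambda_B^{-1}$ is iso) while $\tT B \in \B' \subseteq \Sigma_T^{\bot}$, so $\rho_B$ is ordinarily orthogonal to both its domain and codomain; the argument of \bref{thm:refl_subcats_are_orth_subcats} then forces $\rho_B$ to be iso, placing $B$ in $\B^{(\tTT)} = \Sigma_T^{\bot_\V}$.
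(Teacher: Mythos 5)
Your proposal is correct and takes essentially the same approach as the paper: the paper's proof likewise observes that the underlying ordinary monad of $\tTT$ is idempotent and inverts the same morphisms as $T$ (hence is an ordinary idempotent core), and then obtains $\Sigma_T^{\bot_\V} = \Sigma_T^{\bot}$ by noting that the associated ordinary reflective subcategory is $\Sigma_T^{\bot}$ yet has the same objects as the $\V$-reflective-subcategory determined by $\tTT$, which is $\Sigma_T^{\bot_\V}$ by \bref{thm:charn_idm_approx}. Your closing direct argument for the reverse inclusion is a valid but unneeded supplement to this identification.
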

\begin{proof}
The underlying ordinary monad of $\tTT$ is an idempotent monad which inverts the same morphisms as $\TT$ and hence is an ordinary idempotent core of $\TT$.  Its associated reflective subcategory is $\Sigma_T^{\bot}$ and yet has same objects as the $\V$-reflective-subcategory determined by $\tTT$, which is $\Sigma_T^{\bot_\V}$.
\end{proof}

\begin{ThmSub}\label{thm:tens_cot_ord_existence_suffices}
Let $\TT$ be a $\V$-monad on a tensored and cotensored $\V$-category, and suppose that the idempotent core of the underlying ordinary monad of $\TT$ exists.  Then the idempotent $\V$-core of $\TT$ exists.
\end{ThmSub}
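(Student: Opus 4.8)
The plan is to verify condition 3 of \bref{thm:charn_idm_approx}, namely that the full sub-$\V$-category $\B_{\Sigma_T} = \Sigma_T^{\bot_\V}$ is $\V$-reflective in $\B$; by that theorem this is equivalent to the existence of the idempotent $\V$-core of $\TT$. The strategy is to first identify $\Sigma_T^{\bot_\V}$ with the ordinary orthogonal subcategory $\Sigma_T^{\bot}$ at the level of objects, then to invoke the ordinary hypothesis to show the latter is reflective in the underlying ordinary category, and finally to upgrade this ordinary reflection to a $\V$-reflection using the cotensored hypothesis.

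First I would record that, by \bref{thm:morphs_inv_by_t_vorth_pair}, the pair $(\Sigma_T,\Sigma_T^{\bot_\V})$ is a $\V$-orthogonal-pair in $\B$. Hence by \bref{thm:clos_props_orth_pairs}, $\Sigma_T$ is closed under tensors in $\B$ and $\Sigma_T^{\bot_\V}$ is closed under $\V$-weighted limits, in particular under cotensors. Since $\B$ is tensored and $\Sigma_T$ is closed under tensors, \bref{thm:case_where_ord_orth_impl_enr}(1) yields the key identification $\Sigma_T^{\bot_\V} = \Sigma_T^{\bot}$; that is, the full sub-$\V$-category $\Sigma_T^{\bot_\V}$ has as its underlying ordinary category exactly the full subcategory $\Sigma_T^{\bot}$ of the underlying ordinary category $\B_0$ of $\B$.

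Next I would apply the ordinary ($\V = \SET$) instance of \bref{thm:charn_idm_approx} to the underlying ordinary monad $T$: since, by hypothesis, the idempotent core of $T$ exists, condition 3 of that theorem (read with $\bot_\V = \bot$) tells us that $\Sigma_T^{\bot}$ is a reflective subcategory of $\B_0$. Combining this with the identification above and the fact that $\Sigma_T^{\bot_\V}$ is closed under cotensors in $\B$, I can invoke \bref{thm:ord_refl_closed_under_cot_is_enr}---taking $\C := \Sigma_T^{\bot_\V}$---to conclude that $\Sigma_T^{\bot_\V}$ is a $\V$-reflective-subcategory of $\B$. This is precisely condition 3 of \bref{thm:charn_idm_approx}, so the idempotent $\V$-core of $\TT$ exists, as desired.

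The only delicate point is the passage between enriched and ordinary orthogonality, since in general $\V$-orthogonality is strictly stronger than the ordinary notion; this is exactly where the tensored hypothesis enters, through \bref{thm:case_where_ord_orth_impl_enr}, to force the two orthogonal subcategories to agree. The cotensored hypothesis then does the complementary work of lifting the ordinary reflection to an enriched one via \bref{thm:ord_refl_closed_under_cot_is_enr}. Everything else is a direct chain of the cited results, so I expect no computational obstacle beyond carefully matching the object-classes of the two orthogonal subcategories.
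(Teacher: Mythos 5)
Your proposal is correct and follows exactly the paper's own argument: closure of $\Sigma_T$ under tensors via \bref{thm:morphs_inv_by_t_vorth_pair} and \bref{thm:clos_props_orth_pairs}, the identification $\Sigma_T^{\bot_\V} = \Sigma_T^{\bot}$ via \bref{thm:case_where_ord_orth_impl_enr}, ordinary reflectivity from the ordinary instance of \bref{thm:charn_idm_approx}, and the upgrade to a $\V$-reflection via cotensor-closure and \bref{thm:ord_refl_closed_under_cot_is_enr}. No gaps; your writeup is simply a more explicit rendering of the same chain of citations.
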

\begin{proof}
By \bref{thm:morphs_inv_by_t_vorth_pair} and \bref{thm:clos_props_orth_pairs}, $\Sigma_T$ is closed under tensors in $\B$, so by \bref{thm:case_where_ord_orth_impl_enr}, $\Sigma_T^{\bot_\V} = \Sigma_T^\bot$.  The latter is a reflective subcategory of $\B$ (by an application of \bref{thm:charn_idm_approx} to the underlying ordinary monad of $\TT$).  But $\Sigma_T^{\bot_\V}$ is closed under cotensors by \bref{thm:clos_props_orth_pairs} and hence by \bref{thm:ord_refl_closed_under_cot_is_enr} is a $\V$-reflective-subcategory of $\B$, so the result follows by \bref{thm:charn_idm_approx}.
\end{proof}

\begin{ExaSub}[Double-dualization monads]\label{exa:dbl_dln}
Let $\V$ be a symmetric monoidal closed category and $R \in \V$ an object.  For each object $V$ of $\V$, we shall call the internal hom $V^* := \uV(V,R)$ the \textit{dual} of $V$ with respect to $R$.  We obtain a $\V$-adjunction
$$
\xymatrix {
\uV \ar@/_0.5pc/[rr]_{(-)^*}^{\top} & & {\uV^\op} \ar@/_0.5pc/[ll]_{(-)^*}
}
$$
which we call \textit{the dualization $\V$-adjunction} (for $R$); it is an instance of a \textit{`hom-cotensor'} $\V$-adjunction \cite[(3.42)]{Ke:Ba}.  We call the induced $\V$-monad $\TT$ (on $\uV$) the \textit{double-dualization $\V$-monad}; its underlying $\V$-functor $(-)^{**}$ sends each $V \in \V$ to the \textit{double-dual} $V^{**}$ of $V$.  Since $\uV$ is tensored and cotensored, \bref{thm:tens_cot_ord_existence_suffices} entails that the idempotent $\V$-core $\tTT$ of $\TT$ exists as soon as the idempotent core of the underlying ordinary monad of $\TT$ exists.  In this case, the $\V$-reflective-subcategory $\B'$ determined by $\tTT$ is the $\V$-reflective-hull of the single object $R$ in $\uV$.  Indeed, by \bref{thm:charn_idm_corefl}, $\B'$ is the $\V$-reflective-hull of $\{\uV(V,R) \;|\; V \in \V\}$ in $\uV$, but any $\V$-reflective-subcategory of $\uV$ containing $R$ is closed under cotensors in $\uV$ and hence contains each cotensor $\uV(V,R)$ of $R$.
\end{ExaSub}

\begin{ExaSub}[Completion of normed vector spaces]\label{exa:compl_nvs}
We shall show in \bref{sec:compl_nvs} that the double-dualization $\V$-monad $\TT$ on the category $\V$ of normed or seminormed vector spaces (over $R = \RR$ or $\CC$) has an idempotent $\V$-core $\tTT$ that associates to each (semi-)normed vector space $V$ the \textit{(Cauchy-)completion} of $V$.  The $\V$-reflective-subcategory of $\uV$ determined by $\tTT$ is the category of Banach spaces, which therefore is the $\V$-reflective-hull of $R$ in $\V$ (by \bref{exa:dbl_dln}).
\end{ExaSub}

\begin{ExaSub}[Sheafification for a Lawvere-Tierney topology]
Given an (elementary) topos $\X$ and a Lawvere-Tierney topology $j$ on $\X$, let $\Omega_j$ be the associated retract of the subobject classifier $\Omega$, and let $\TT$ be the double-dualization $\V$-monad for $\Omega_j$.  We show in \bref{thm:lt_top} that the the idempotent $\V$-core $\tTT$ of $\TT$ is the \textit{$j$-sheafification} $\X$-monad, whose associated $\X$-reflective-subcategory of $\uX$ consists of the $j$-sheaves.  Hence the $\X$-category of $j$-sheaves is by \bref{exa:dbl_dln} the $\X$-reflective-hull of $\Omega_j$ in $\uX$.
\end{ExaSub}

\section{Example: Completion of normed vector spaces}\label{sec:compl_nvs}

Let $\SNorm_1$ be the category of seminormed vector spaces over $R = \RR$ or $\CC$ with nonexpansive linear maps (i.e. bounded linear maps of seminorm $\lt 1$), and let $\Norm_1$ be the full subcategory consisting of normed vector spaces.

Letting $\V$ be either $\SNorm_1$ or $\Norm_1$, it is well-known that $\V$ is symmetric monoidal closed; e.g. see \cite{BoDay} \S 3.4.  Indeed, in both categories, the internal hom $\uV(V,W)$ $(V,W \in \V)$ is the vector space of all bounded linear maps $V \rightarrow W$, equipped with the usual \textit{operator (semi)norm}.  Given seminormed (resp. normed) spaces $V,W$, the monoidal product $V \otimes W$ in $\SNorm_1$ (resp. $\Norm_1$) is the algebraic tensor product, equipped with the \textit{projective} seminorm (resp. norm) $||x|| = \inf\{ \sum_i ||v_i||||w_i|| \;|\; x = \sum_{i = 1}^n v_i \otimes w_i\}$.  (In particular, the projective seminorm is a norm as soon as $V$ and $W$ are normed \cite[Ch. III, Exercise 20]{Sch:Tvs}.)

The \textit{dualization $\V$-functor} $(-)^*:\uV^\op \rightarrow \uV$ \pbref{exa:dbl_dln} associates to each $V \in \V$ the space $V^*$ of all bounded linear functionals on $V$, and assigns to each morphism $h:V_1 \rightarrow V_2$ in $\V$ the map $h:V_2^* \rightarrow V_1^*$ given by $\psi \mapsto \psi \cdot h$.  The \textit{double-dualization $\V$-monad} $\TT$ \pbref{exa:dbl_dln} on $\uV$ associates to each $V \in \V$ its \textit{double-dual} $TV = V^{**}$, and the unit morphism $\eta_V:V \rightarrow TV$ is the familiar canonical linear map, which is always \textit{isometric} (i.e. $||\eta_V(v)|| = ||v||$ for all $v \in V$), so that $\eta_V$ is an \textit{isometric embedding} (i.e., isometric and injective) as soon as $V$ is normed.

In the present section, we show that the double-dualization $\V$-monad $\TT$ on $\uV$ has an idempotent $\V$-core $\tTT$, given by \textit{completion}.

\begin{ParSub}\label{par:cauchy_compl}
The full subcategory $\Ban_1$ of $\V( = \SNorm_1$ or $\Norm_1)$ consisting of all Banach spaces is a $\V$-reflective-subcategory of $\uV$.  Indeed, for an arbitrary morphism \mbox{$\rho_V:V \rightarrow \widetilde{V}$} in $\V$ with $\widetilde{V}$ a Banach space, the following conditions are equivalent, and they characterize (up to isomorphism) the familiar \textit{completion} $\widetilde{V}$ of $V$:
\begin{enumerate}
\item $\rho_V:V \rightarrow \widetilde{V}$ is dense and isometric.
\item For each Banach space $B$, the morphism $\uV(\rho_V,B):\uV(\widetilde{V},B) \rightarrow \uV(V,B)$ in $\V$ is an (isometric) isomorphism.
\item For each morphism $f:V \rightarrow B$ in $\V$ with $B$ a Banach space, there is a unique morphism $f^\sharp:\widetilde{V} \rightarrow B$ in $\V$ with $f^\sharp \cdot \rho_V = f$; further, $||f^\sharp|| = ||f||$.
\end{enumerate}
Concretely, we can take $\widetilde{V}$ to be the familiar \textit{Cauchy-completion} of $V$, or the closure of the image of $\eta_V:V \rightarrow TV = V^{**}$.
\end{ParSub}

\begin{LemSub}\label{thm:dense_iff_dual_injective}
Let $h:V \rightarrow W$ in $\V(=\SNorm_1$ or $\Norm_1)$.  Then $h$ is dense if and only if $h^*:W^* \rightarrow V^*$ is injective.
\end{LemSub}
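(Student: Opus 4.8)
The plan is to compute the kernel of the linear map $h^*$ explicitly and then reduce each implication to a standard fact about bounded functionals. Since $h^*$ sends $\psi \in W^*$ to $\psi \cdot h$, its kernel consists of exactly those bounded functionals $\psi$ that vanish on the image $h(V)$; and because each such $\psi$ is continuous, it vanishes on $h(V)$ if and only if it vanishes on the (seminorm) closure $\overline{h(V)}$. Thus $h^*$ is injective precisely when the only bounded functional annihilating $\overline{h(V)}$ is $0$, and the lemma amounts to the assertion that the closed subspace $\overline{h(V)} \subseteq W$ equals $W$ (i.e. $h$ is dense) if and only if no nonzero bounded functional annihilates it.

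First I would treat the forward implication, which requires no Hahn--Banach input: if $h$ is dense, so that $\overline{h(V)} = W$, then any $\psi$ in the kernel of $h^*$ vanishes on a dense subspace and hence, by continuity, on all of $W$, giving $\psi = 0$; so $h^*$ is injective.

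For the converse I would argue contrapositively. Assuming $h$ is not dense, the closure $M := \overline{h(V)}$ is a proper subspace of $W$, so (closure in a seminormed space being taken with respect to the associated pseudometric) there is $w_0 \in W$ whose distance $d$ to $M$ is strictly positive. On the subspace spanned by $M$ and $w_0$ I would define the functional sending $m + \lambda w_0$ to $\lambda$; the estimate $\|m + \lambda w_0\| \geq |\lambda|\,d$ (valid since $M$ is a subspace) shows that it is bounded by $1/d$, and I would then extend it to $\psi \in W^*$ via the Hahn--Banach theorem. By construction $\psi$ annihilates $M \supseteq h(V)$, so $\psi \in \ker h^*$, yet $\psi(w_0) = 1 \neq 0$, whence $\psi \neq 0$ and $h^*$ fails to be injective.

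The main obstacle, and the only nonformal ingredient, is the Hahn--Banach extension in the last paragraph. I expect the seminormed case to need only the mild care of using the seminorm itself as the dominating sublinear functional---equivalently, of passing to the quotient normed space $W/\{w : \|w\| = 0\}$, on whose dual $W^*$ (and likewise $V^*$) may be canonically identified, so that the construction reduces to the classical normed case. Everything else is routine: the verification of continuity of the constructed functional and of the annihilator description of $\ker h^*$.
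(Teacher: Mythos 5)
Your proof is correct and takes essentially the same route as the paper's: the forward direction by continuity of bounded functionals, and the converse contrapositively, using Hahn--Banach to produce a nonzero $\psi \in W^*$ vanishing on the closure of the image of $h$, so that $h^*(\psi) = \psi \cdot h = 0$. The only difference is one of detail: you unfold the standard construction behind the Hahn--Banach corollary (point at positive distance from a proper closed subspace, functional on the span, extension dominated by the seminorm) and address the seminormed case explicitly, whereas the paper invokes the Hahn--Banach Extension Theorem as a black box.
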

\begin{proof}
The `only if' part is straightforward.  Conversely, suppose $h$ is not dense.  Then, letting $C \subseteq W$ be the closure of the image of $h$, the Hahn-Banach Extension Theorem entails that there is some nonzero $\psi \in W^*$ with $\psi|_C = 0$.  But then $h^*(\psi) = \psi \cdot h = 0$ and yet $\psi \neq 0$, showing that $h^*$ is not injective.
\end{proof}

\begin{PropSub}\label{thm:densisom_inv_compl_inv_dualn}
For each morphism $h:V \rightarrow W$ in $\V(=\SNorm_1$ or $\Norm_1)$, the following are equivalent:
\begin{enumerate}
\item $h$ is dense and isometric.
\item $\widetilde{h}:\widetilde{V} \rightarrow \widetilde{W}$ is an (isometric) 
isomorphism.
\item $h^*:W^* \rightarrow V^*$ is an (isometric) isomorphism.
\end{enumerate}
\end{PropSub}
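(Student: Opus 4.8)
The plan is to treat $(1)\Leftrightarrow(3)$ as the analytic heart of the statement, proved directly with Lemma \bref{thm:dense_iff_dual_injective} and the Hahn--Banach theorem, and then to bootstrap $(2)\Leftrightarrow(3)$ by transporting condition $(3)$ along the completion, exploiting the fact that passing to $\widetilde{V}$ does not alter the dual. Throughout I would use that $h$ nonexpansive forces $\|h^*\psi\| = \|\psi\cdot h\| \le \|\psi\|$, so that the real content is always the \emph{reverse} inequalities and the surjectivity assertions.

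For $(1)\Rightarrow(3)$, assume $h$ dense and isometric. Density gives injectivity of $h^*$ immediately by \bref{thm:dense_iff_dual_injective}. For surjectivity of $h^*$, given $\phi \in V^*$ I would define a functional on the image $h(V)\subseteq W$ by $h(v)\mapsto \phi(v)$; this is well defined and bounded of norm $\le\|\phi\|$ precisely because isometry of $h$ forces $\ker h \subseteq \{v:\|v\|=0\}$, on which the bounded functional $\phi$ must vanish, and then Hahn--Banach extends it to $\psi\in W^*$ with $\|\psi\|\le\|\phi\|$ and $h^*\psi=\phi$. For the isometry $\|h^*\psi\|\ge\|\psi\|$, I would approximate any $w$ in the unit ball of $W$ by some $h(v)$ (density) with $\|v\|=\|h(v)\|\le 1+\epsilon$ (isometry) and estimate $|\psi(w)|\le\|h^*\psi\|(1+\epsilon)+\|\psi\|\epsilon$, then let $\epsilon\to 0$. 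For the converse $(3)\Rightarrow(1)$, injectivity of the isomorphism $h^*$ yields density of $h$ by \bref{thm:dense_iff_dual_injective}; for isometry, given $v$ I would choose via Hahn--Banach a $\phi\in V^*$ with $\|\phi\|\le 1$ and $\phi(v)=\|v\|$ (possible over a seminorm), write $\phi=h^*\psi=\psi\cdot h$ by surjectivity with $\|\psi\|=\|\phi\|\le 1$ by the isometry of $h^*$, and conclude $\|v\|=\phi(v)=\psi(h(v))\le\|h(v)\|$, which with nonexpansiveness gives $\|h(v)\|=\|v\|$.

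For $(2)\Leftrightarrow(3)$ I would first record the reduction that completion does not change the dual: taking $B=R$ in condition $2$ of \bref{par:cauchy_compl} shows $\rho_V^*:\widetilde{V}^*\to V^*$ is an isometric isomorphism, and dualizing the naturality square $\widetilde{h}\cdot\rho_V=\rho_W\cdot h$ gives $\rho_V^*\cdot\widetilde{h}^*=h^*\cdot\rho_W^*$, so that $h^*$ is an isometric isomorphism if and only if $\widetilde{h}^*$ is. Now I would apply the already-established equivalence $(1)\Leftrightarrow(3)$ to the morphism $\widetilde{h}$ between the Banach spaces $\widetilde{V},\widetilde{W}$: this says $\widetilde{h}^*$ is an isometric isomorphism iff $\widetilde{h}$ is dense and isometric. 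But between Banach spaces a dense isometric map is automatically an isomorphism (completeness of $\widetilde{V}$ makes the isometric image closed, density makes it all of $\widetilde{W}$, and normed-ness makes the map injective), the converse being clear; hence $\widetilde{h}$ dense and isometric is equivalent to $\widetilde{h}$ being an isometric isomorphism, which is $(2)$. Chaining these equivalences closes the loop $(2)\Leftrightarrow\widetilde{h}^*\text{ iso}\Leftrightarrow h^*\text{ iso}=(3)$.

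The main obstacle I anticipate is the analytic bookkeeping in $(1)\Leftrightarrow(3)$, specifically the two Hahn--Banach extensions delivering surjectivity and isometry of $h^*$; the delicate point is the seminormed case, where $h$ may have nontrivial kernel, so one must verify that every bounded functional in sight vanishes on the zero-seminorm subspace before extending. The categorical part $(2)\Leftrightarrow(3)$, by contrast, should be routine once the reduction $\rho_V^*$ an isometric isomorphism is in hand.
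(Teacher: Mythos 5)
Your proof is correct, but it follows a genuinely different route from the paper's. The paper proves the cycle $1 \Rightarrow 2 \Rightarrow 3 \Rightarrow 1$ and keeps the explicit analysis to a minimum: for $1 \Rightarrow 2$ it notes that $\rho_W \cdot h$ is a dense isometric morphism into a Banach space, hence itself exhibits $\widetilde{W}$ as a completion of $V$ \pbref{par:cauchy_compl}, so $\widetilde{h}$ is an isomorphism by uniqueness in the universal property; for $2 \Rightarrow 3$ it dualizes the naturality square and uses that $\rho_V^*$ and $\rho_W^*$ are isomorphisms (this is exactly your transport step, used in one direction only); and for $3 \Rightarrow 1$ it gets density from \bref{thm:dense_iff_dual_injective} and isometry from the square $\eta_W \cdot h = h^{**}\cdot \eta_V$, since the units $\eta$ are isometric and $h^{**}$ is an isomorphism. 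You instead prove $1 \Leftrightarrow 3$ by direct functional analysis --- a Hahn--Banach extension along the image $h(V)$ for surjectivity of $h^*$ (your observation that isometry forces $\ker h$ into the null space of the seminorm, where bounded functionals vanish, is precisely the point needed in $\SNorm_1$), an $\epsilon$-approximation argument for isometry of $h^*$, and a norming functional for $3 \Rightarrow 1$ --- and then deduce $2 \Leftrightarrow 3$ by transporting along $\rho^*$ and noting that a dense isometric map between Banach spaces is an isomorphism. The trade-off: the paper's route makes surjectivity of $h^*$ a formal consequence of the universal property of completion (it is never constructed), while your $1 \Leftrightarrow 3$ re-derives that analytic content by hand, which is longer but self-contained --- the completion is not mentioned there at all --- and yields explicit norm-preservation at each step. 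One small point you use implicitly in both directions and should state once: isomorphisms in $\SNorm_1$ and $\Norm_1$ are automatically isometric, since $f$ and $f^{-1}$ are both nonexpansive; this is the content of the parenthetical ``(isometric)'' in conditions 2 and 3, and it is what licenses your appeal to ``the isometry of $h^*$'' when condition 3 is read merely as ``$h^*$ is an isomorphism.''
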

\begin{proof}
To show 1 $\Rightarrow$ 2, suppose that $h$ is dense and isometric.  We have a commutative square
\begin{equation}\label{eqn:nat_sq_c}
\xymatrix{
V \ar[d]_h \ar[r]^{\rho_{V}} & \widetilde{V} \ar[d]^{\widetilde{h}} \\
W \ar[r]^{\rho_{W}}          & {\widetilde{W}}
}
\end{equation}
in which both $h$ and $\rho_W$ are dense and isometric, so that the composite $\rho_W \cdot h : V \rightarrow \widetilde{W}$ is a dense, isometric morphism into a Banach space and hence satisfies the universal property characterizing the completion of $V$ \pbref{par:cauchy_compl} .  Using this and also the (same) universal property of $\rho_V$, it follows that $\widetilde{h}$ is an isomorphism.

To show 2 $\Rightarrow$ 3, suppose that $\widetilde{h}$ is an isomorphism.  Then, applying $(-)^*$ to the commutative square \eqref{eqn:nat_sq_c}, we obtain a commutative square
$$
\xymatrix{
V^* & \widetilde{V}^* \ar[l]_{\rho_{V}^*} \\
W^* \ar[u]^{h^*} & {\widetilde{W}^*} \ar[l]_{\rho_{W}^*} \ar[u]_{\widetilde{h}^*}
}
$$
in which the right side is an isomorphism.  But the second characterization of the completion in \bref{par:cauchy_compl} entails that the top and bottom faces are isomorphisms as well, so $h^*$ is an isomorphism.

To show 3 $\Rightarrow$ 1, suppose that $h^*$ is an isomorphism. By \bref{thm:dense_iff_dual_injective} we deduce that $h$ is dense.  Further, we have a commutative square 
$$
\xymatrix{
V \ar[d]_h \ar[r]^{\eta_{V}} & V^{**} \ar[d]^{h^{**}} \\
W \ar[r]^{\eta_{W}}          & W^{**}
}
$$
in which the right side is an isomorphism and the top and bottom sides are isometric, and it follows that $h$ is isometric.
\end{proof}

\begin{ParSub}
Let $\tTT$ be the idempotent $\V$-monad induced by the completion $\V$-adjunction \mbox{$\widetilde{(-)} \nsststile{}{\rho} J:\Ban_1 \hookrightarrow \uV$}, where $\V =\SNorm_1$ or $\Norm_1$, so that $\tT V = \widetilde{V}$ is the completion of $V \in \V$.
\end{ParSub}

\begin{ThmSub}\label{thm:compl_nvs}
The completion $\V$-monad $\tTT$ on the category $\V$ of normed (resp. seminormed) vector spaces is the idempotent $\V$-core of the double-dualization $\V$-monad $\TT$ on $\uV$.  Moreover, $\Sigma_T = \Sigma_{\tT}$ is the class of all dense, isometric morphisms in $\V$.
\end{ThmSub}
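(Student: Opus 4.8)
The plan is to describe the two classes $\Sigma_T$ and $\Sigma_{\tT}$ explicitly and then read off the result from the definition of the idempotent $\V$-core together with Proposition \bref{thm:densisom_inv_compl_inv_dualn}. First I would pin down $\Sigma_T$. The double-dualization $\V$-monad $\TT$ is induced by the dualization $\V$-adjunction \pbref{exa:dbl_dln}, whose left $\V$-adjoint is $(-)^*:\uV \to \uV^\op$; hence by Lemma \bref{thm:all_ladjs_ind_given_monad_inv_same_morphs} we have $\Sigma_T = \Sigma_{(-)^*}$. A morphism $h:V \to W$ is inverted by $(-)^*:\uV \to \uV^\op$ exactly when the induced map $h^*:W^* \to V^*$ is an isomorphism in $\uV$, so this exhibits $\Sigma_T$ as the class of all $h$ with $h^*$ iso.

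Next I would pin down $\Sigma_{\tT}$. Since $\tTT$ is the $\V$-monad induced by the completion $\V$-reflection $\widetilde{(-)} \dashv J:\Ban_1 \hookrightarrow \uV$, its underlying endofunctor is $\tT = \widetilde{(-)}$, so $\tT h = \widetilde{h}$ and $\Sigma_{\tT}$ is the class of all $h$ for which $\widetilde{h}$ is an isomorphism.

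With both classes so described, Proposition \bref{thm:densisom_inv_compl_inv_dualn} does the remaining work: for any $h$, the conditions ``$h$ dense and isometric'', ``$\widetilde{h}$ iso'', and ``$h^*$ iso'' are all equivalent. The latter two are precisely membership in $\Sigma_{\tT}$ and in $\Sigma_T$ respectively, so all three classes coincide. This simultaneously establishes the ``Moreover'' clause, namely that $\Sigma_T = \Sigma_{\tT}$ equals the class of dense, isometric morphisms.

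Finally I would assemble the main statement. The completion $\V$-adjunction is a $\V$-reflection \pbref{par:cauchy_compl}, so by \bref{thm:refl_idemp_mnd} its induced $\V$-monad $\tTT$ is idempotent. As $\tTT$ is then an idempotent $\V$-monad on $\uV$ whose underlying endofunctor inverts exactly the morphisms inverted by $T$, it is by definition the idempotent $\V$-core of $\TT$ (alternatively, one may invoke \bref{thm:charn_idm_approx}). I do not expect a substantive obstacle: the analytic content (the Hahn--Banach theorem and the universal property of the completion) has already been absorbed into Proposition \bref{thm:densisom_inv_compl_inv_dualn}, and the only points demanding care are the bookkeeping that the relevant left adjoint is $(-)^*$ valued in $\uV^\op$ (so that ``inverted by $T$'' unwinds to ``$h^*$ iso''), and the observation that the completion reflection indeed yields $\tT = \widetilde{(-)}$.
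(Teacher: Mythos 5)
Your proposal is correct and follows essentially the same route as the paper's proof: both identify $\Sigma_T = \Sigma_{(-)^*}$ via Lemma \bref{thm:all_ladjs_ind_given_monad_inv_same_morphs} and then apply Proposition \bref{thm:densisom_inv_compl_inv_dualn} to equate this class with $\Sigma_{\tT}$ and with the dense isometric morphisms, so that $\tTT$ is the idempotent $\V$-core directly by definition. Your write-up merely makes explicit some bookkeeping (that inversion by $(-)^*:\uV \to \uV^\op$ means $h^*$ is iso, and that the completion reflection gives $\tT = \widetilde{(-)}$) which the paper leaves implicit.
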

\begin{proof}
By \bref{thm:all_ladjs_ind_given_monad_inv_same_morphs}, $\Sigma_T = \Sigma_{(-)^*}$, and by \bref{thm:densisom_inv_compl_inv_dualn}, the latter class equals $\Sigma_{\tT}$ and consists of exactly the dense, isometric morphisms.
\end{proof}

\begin{CorSub}
The category $\Ban_1$ of Banach spaces is the $\V$-reflective-hull of $R$ $(=\RR$ or $\CC$) in the category $\V$ of normed (resp. seminormed) vector spaces.  Moreover, a normed (resp. seminormed) vector space $B$ is a Banach space if and only if the following equivalent conditions hold for each morphism $h:V \rightarrow W$ in $\V$:
\begin{enumerate}
\item If $h \bot_\V R$, then $h \bot_\V B$.
\item If $h^*:W^* \rightarrow V^*$ is an isomorphism (in \V), then $\uV(h,B):\uV(W,B) \rightarrow \uV(V,B)$ is an isomorphism.
\end{enumerate}
\end{CorSub}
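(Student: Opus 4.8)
The plan is to assemble the statement from results already in hand, since both assertions are essentially repackagings of Theorem~\bref{thm:compl_nvs} and Example~\bref{exa:dbl_dln}. For the first claim I would argue as follows. By Theorem~\bref{thm:compl_nvs} the completion $\V$-monad $\tTT$ is the idempotent $\V$-core of the double-dualization $\V$-monad $\TT$ on $\uV$, and by construction of the completion $\V$-adjunction its associated $\V$-reflective-subcategory is exactly $\Ban_1$. On the other hand, Example~\bref{exa:dbl_dln} shows that whenever the idempotent $\V$-core of a double-dualization monad exists, the $\V$-reflective-subcategory it determines is the $\V$-reflective-hull of the single object $R$ in $\uV$. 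Comparing these two descriptions of the same subcategory yields that $\Ban_1$ is the $\V$-reflective-hull of $R$ $(=\RR$ or $\CC)$.

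For the second claim I would invoke Theorem~\bref{thm:charn_idm_approx}, part~(iii), which identifies the $\V$-reflective-subcategory associated to the idempotent core $\tTT$ with $\B_{\Sigma_T} = \Sigma_T^{\bot_\V}$. Since this subcategory is $\Ban_1$, a (semi)normed space $B$ is a Banach space if and only if $B \in \Sigma_T^{\bot_\V}$, i.e. if and only if $h \bot_\V B$ for every $h \in \Sigma_T$. The crux is then to recognize $\Sigma_T$ as the class $\{R\}^{\top_\V}$ of morphisms $\V$-orthogonal to $R$: I would note that the internal hom $\uV(h,R)$ is precisely the dual map $h^*$, so $h \bot_\V R$ means exactly that $h^*$ is an isomorphism, which by Proposition~\bref{thm:densisom_inv_compl_inv_dualn} (the equivalence of its conditions 1 and 3) holds precisely when $h$ is dense and isometric, and this by Theorem~\bref{thm:compl_nvs} is precisely membership in $\Sigma_T$. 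Hence $\{R\}^{\top_\V} = \Sigma_T$, and condition~1 --- that $h \bot_\V R$ entails $h \bot_\V B$ for all $h$ --- says exactly $B \in \Sigma_T^{\bot_\V}$, i.e. that $B$ is a Banach space.

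Finally, I would observe that condition~2 is nothing but condition~1 with the two orthogonality requirements spelled out: using $\uV(h,R) = h^*$, the hypothesis $h \bot_\V R$ is literally the assertion that $h^*$ is an isomorphism, and the conclusion $h \bot_\V B$ is literally the assertion that $\uV(h,B)$ is an isomorphism. Thus conditions~1 and~2 are the same statement written two ways and are trivially equivalent. Since every ingredient is already established, there is no substantive obstacle; the only point demanding care is the bookkeeping chain $h \bot_\V R \Leftrightarrow h^*\text{ iso} \Leftrightarrow h\text{ dense isometric} \Leftrightarrow h \in \Sigma_T$, which is what couples the orthogonality condition to the class $\Sigma_T$ governing the reflective subcategory $\Ban_1$.
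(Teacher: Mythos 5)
Your proposal is correct and follows essentially the route the paper intends (the corollary is stated without proof, as an immediate consequence of Theorem \bref{thm:compl_nvs}, Example \bref{exa:dbl_dln}, and Theorem \bref{thm:charn_idm_approx}): you identify $\Ban_1$ as the $\V$-reflective-subcategory of the idempotent $\V$-core, hence the $\V$-reflective-hull of $R$, and you correctly reduce the ``moreover'' part to the identification $\{R\}^{\top_\V} = \Sigma_T$ via $\uV(h,R) = h^*$ together with \bref{thm:densisom_inv_compl_inv_dualn} and \bref{thm:compl_nvs}. The only remark worth making is that the chain through dense-isometric morphisms can be shortcut: $\Sigma_T = \Sigma_{(-)^*}$ by \bref{thm:all_ladjs_ind_given_monad_inv_same_morphs}, and $\Sigma_{(-)^*}$ is by definition $\{h \mid h^* \text{ iso}\} = \{R\}^{\top_\V}$, but this is a matter of economy, not correctness.
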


\section{Completion, closure, and density relative to a monad. Existence results via factorization.} \label{sec:cmpl_cl_dens}

\begin{ParSub}[Given data] \label{par:data_enr_orth_subcat_subord_adj}
In the present section, we work with given data as follows, which we shall later suppose to satisfy Assumption \bref{assn:dense_closed_factn_sys} below:
\begin{enumerate}
\item Let $\TT = (T,\eta,\mu)$ be a $\V$-monad on a $\V$-category $\B$.
\item Let $\Sigma \subs \Sigma_T$ be a class of morphisms inverted by $T$.
\item Let $(\E,\M)$ be a $\V$-proper $\V$-prefactorization-system on $\B$.
\end{enumerate}
We shall refer to the morphisms in $\M$ as \textit{$\M$-embeddings}.
\end{ParSub}

\begin{DefSub} \label{def:compl_sep_closed_dense}
Given data as in \bref{par:data_enr_orth_subcat_subord_adj}, we make the following definitions:
\begin{enumerate}
\item An object $B$ of $\B$ is \textit{$\Sigma$-complete} if $B \in \B_\Sigma$.
\item An object $B$ of $\B$ is \textit{$\TT$-separated} if $\eta_B:B \rightarrow TB$ is an $\M$-embedding.
\item We denote the full sub-$\V$-category of $\B$ consisting of the $\Sigma$-complete $\TT$-separated objects by $\B_{(\TT,\Sigma)}$.
\item We say that an $\M$-embedding $m:B_1 \rightarrowtail B_2$ in $\B$ is \textit{$\Sigma$-closed} if $m \in \Sigma^{\downarrow_\V}$.  We denote by $\SigmaClEmb := \Sigma^{\downarrow_\V} \cap \M$ the class of all $\Sigma$-closed $\M$-embeddings in $\B$.
\item We say that a morphism in $\B$ is \textit{$\Sigma$-dense} if it lies in $\SigmaDense := \SigmaClEmb^{\uparrow_\V}$.
\item If $\Sigma = \Sigma_T$, then we replace the prefixes ``$\Sigma$-'' with ``$\TT$-'', obtaining the notions of \textit{$\TT$-complete} object, \textit{$\TT$-closed $\M$-embedding}, and \textit{$\TT$-dense} morphism.
\end{enumerate}
\end{DefSub}

\begin{RemSub}\label{rem:sigmadense_sigma_clemb_vprefs}
Observe that $(\SigmaDense,\SigmaClEmb)$ is a $\V$-prefactorization-system on $\B$, since $\SigmaClEmb = \Sigma^{\downarrow_\V} \cap \E^{\downarrow_\V} = (\Sigma \cup \E)^{\downarrow_\V}$.
\end{RemSub}

\begin{AssnSub}\label{assn:dense_closed_factn_sys}
For the remainder of \S \bref{sec:cmpl_cl_dens}, we shall assume that \textit{every morphism in $\B$ factors as a $\Sigma$-dense morphism followed by a $\Sigma$-closed $\M$-embedding}.
\end{AssnSub}

It then follows by \bref{rem:sigmadense_sigma_clemb_vprefs} that $(\SigmaDense,\SigmaClEmb)$ is a $\V$-factorization-system on $\B$.

\begin{ExaSub}\label{exa:msubobjcpl_impl_factn_assn}
Given data as in \bref{par:data_enr_orth_subcat_subord_adj}, Assumption \bref{assn:dense_closed_factn_sys} is satisfied as soon as $\B$ is \mbox{\textit{$\M$-subobject-complete}} \pbref{par:m_subobj_compl}.
\end{ExaSub}

\begin{ExaSub}[Normed and seminormed spaces]
As we shall show in \bref{thm:dens_cl_nvs}, data satisfying Assumption \bref{assn:dense_closed_factn_sys} can be obtained as follows.  Take $\V := \Norm_1$ or $\SNorm_1$ to be the category of normed or seminormed vector spaces \pbref{sec:compl_nvs}, let $\B := \uV$, let $\E$ consist of all surjective morphisms in $\V$, and let $\M$ consist of all isometric embeddings in $\V$.  Let $\TT$ be the double-dualization $\V$-monad on $\uV$ \pbref{sec:compl_nvs}, and let $\Sigma := \Sigma_T$.  Then
\begin{enumerate}
\item a (semi)normed space is $\TT$-complete if and only if it is a Banach space;
\item a seminormed space is $\TT$-separated if and only if it is normed;
\item the $\TT$-closed $\M$-embeddings are exactly the closed isometric embeddings;
\item the $\TT$-dense morphisms are exactly the dense morphisms in the usual sense;
\item $\Sigma_T$ consists of exactly the dense isometric morphisms in $\V$.
\end{enumerate}
\end{ExaSub}

\begin{ExaSub}[Sheaves, density, and closed subobjects]
Given an (elementary) topos $\X$ and a Lawvere-Tierney topology $j$ on $\X$, we show in \bref{thm:lt_top} that data satisfying Assumption \bref{assn:dense_closed_factn_sys} can be obtained as follows.  Take $\V := \X$, let $\B := \uX$, and let $(\E,\M) := (\Epi\X,\Mono\X)$.  Let $\TT$ be the double-dualization $\X$-monad for $\Omega_j$, and let $\Sigma := \Sigma_T$.  Then
\begin{enumerate}
\item an object of $\X$ is $\TT$-complete if and only if it is a $j$-sheaf;
\item an object of $\X$ is $\TT$-separated if and only if it is $j$-separated;
\item the $\TT$-closed $\M$-embeddings are exactly the $j$-closed monomorphisms;
\item the $\TT$-dense morphisms are exactly the $j$-dense morphisms.
\end{enumerate}
Also, given a Grothendieck quasitopos $\Y$, so that (up to equivalence of categories) $\Y$ is the category of $K$-separated $J$-sheaves on a \textit{bisite} $(\C,J,K)$, we show in \bref{par:gr_qtoposes} that data satisfying Assumption \bref{assn:dense_closed_factn_sys} can be obtained by considering the associated Lawvere-Tierney topologies $j,k$ on the presheaf topos $\X := [\C^\op,\Set]$, taking $\TT$ to be the double-dualization $\X$-monad for $\Omega_k$, and letting $\Sigma$ consist of all morphisms inverted by the double-dualization functor for $\Omega_j$, so that $\uX_{(\TT,\Sigma)} = \Y$.
\end{ExaSub}

\begin{PropSub}\label{thm:props_sigma_cl_dens}\emptybox
\begin{enumerate}
\item Every morphism $e \in \E$ is $\Sigma$-dense.
\item Every morphism in $\Sigma$ is $\Sigma$-dense.
\item If a composite $B_1 \xrightarrow{f} B_2 \xrightarrow{g} B_3$ is $\Sigma$-dense, then its second factor $g$ is $\Sigma$-dense.
\item If a composite $B_1 \xrightarrow{f} B_2 \xrightarrow{g} B_3$ and its second factor $g$ are both $\Sigma$-closed $\M$-embeddings, then the first factor $f$ is a $\Sigma$-closed $\M$-embedding.
\item Every $\Sigma$-dense $\Sigma$-closed $\M$-embedding is an isomorphism.
\item $\SigmaClEmb$ is closed under composition, cotensors, arbitrary $\V$-intersections, and $\V$-pullbacks along arbitrary morphisms in $\B$.
\item $\SigmaDense$ is closed under composition, tensors, arbitrary $\V$-cofibre-coproducts, and $\V$-pushouts along arbitrary morphisms in $\B$.
\end{enumerate}
\end{PropSub}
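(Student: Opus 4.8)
The plan is to derive all seven items from the single structural fact, recorded in \bref{rem:sigmadense_sigma_clemb_vprefs}, that $(\SigmaDense,\SigmaClEmb)$ is a $\V$-prefactorization-system on $\B$ --- indeed, under Assumption \bref{assn:dense_closed_factn_sys}, a $\V$-factorization-system. Granting this, items 6 and 7 are immediate: by $\V$-properness \pbref{par:vproper_prefactn} we have $\SigmaClEmb \subs \M \subs \Mono_\V\B$, so applying the stability results of \bref{par:stab_props_prefactn} to $(\SigmaDense,\SigmaClEmb)$ in place of $(\E,\M)$ shows that the right class $\SigmaClEmb$ is closed under composition, cotensors, $\V$-fibre-products, and $\V$-pullbacks along arbitrary morphisms; closure under $\V$-fibre-products of $\SigmaClEmb$-morphisms (which are $\V$-monos) is exactly closure under $\V$-intersections \pbref{par:vlimits}, giving item 6. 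Dually, the left class $\SigmaDense$ inherits the opposite stability properties --- closure under composition, tensors, $\V$-cofibre-coproducts, and $\V$-pushouts --- which is item 7.

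For items 1 and 2 I would invoke the description $\SigmaClEmb = (\Sigma \cup \E)^{\downarrow_\V}$ from \bref{rem:sigmadense_sigma_clemb_vprefs}; then $\SigmaDense = \SigmaClEmb^{\uparrow_\V} = \left((\Sigma \cup \E)^{\downarrow_\V}\right)^{\uparrow_\V} \sups \Sigma \cup \E$, using the elementary inclusion $X \subs X^{\downarrow_\V \uparrow_\V}$, and this yields both 1 and 2 at once. Item 4 is precisely the right-class cancellation clause ``if $g\cdot f \in \M$ and $g \in \M$ then $f \in \M$'' of \bref{par:stab_props_prefactn}, applied to $(\SigmaDense,\SigmaClEmb)$: if $g\cdot f$ and its second factor $g$ lie in $\SigmaClEmb$, then so does $f$. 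For item 5, if $h \in \SigmaDense \cap \SigmaClEmb$ then, since $\SigmaDense = \SigmaClEmb^{\uparrow_\V}$, we have $h \downarrow_\V h$, whence $h$ is an isomorphism by \cite[3.7]{Lu:EnrFactnSys} --- the same device used in the proof of \bref{thm:refl_subcats_are_orth_subcats}.

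The one item requiring genuine work is 3, the right-cancellation $g\cdot f \in \SigmaDense \Rightarrow g \in \SigmaDense$. This is \emph{not} a formal consequence of the prefactorization structure --- it already fails when the left class consists only of isomorphisms (cf. \bref{thm:orth_to_all_is_iso}) --- so the argument must use $\V$-properness. I would take a $(\SigmaDense,\SigmaClEmb)$-factorization $g = m\cdot d$ \pbref{assn:dense_closed_factn_sys}, with $d \in \SigmaDense$ and $m \in \SigmaClEmb$, and aim to show $m$ is an isomorphism. Writing $g\cdot f = m\cdot(d\cdot f)$ and factoring $d\cdot f = m'\cdot d'$ in the same way gives $g\cdot f = (m\cdot m')\cdot d'$ with $m\cdot m' \in \SigmaClEmb$ by item 6 and $d' \in \SigmaDense$; comparing this against the trivial factorization $g\cdot f = 1\cdot(g\cdot f)$ of the $\Sigma$-dense morphism $g\cdot f$ and invoking the essential uniqueness of $(\SigmaDense,\SigmaClEmb)$-factorizations, I conclude that $m\cdot m'$ is an isomorphism. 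Then $m$ is a retraction, hence $m \in \E$ by \bref{par:vproper_prefactn}; since also $m \in \M = \E^{\downarrow_\V}$, we obtain $m \downarrow_\V m$ and therefore $m$ is an isomorphism by \cite[3.7]{Lu:EnrFactnSys}. Consequently $g = m\cdot d$ lies in $\SigmaDense$, which contains all isomorphisms and is closed under composition by item 7. I expect this uniqueness-of-factorization argument, paired with the observation that the naive cancellation is false without $\V$-properness, to be the crux of the whole proposition.
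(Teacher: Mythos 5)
Your proposal is correct, and on items 1, 2, 4, 5, 6, 7 it is essentially the paper's own argument: the paper proves 1 and 2 by the same Galois-connection observation (the inclusion of $\Sigma\cup\E$ in $(\Sigma\cup\E)^{\downarrow_\V\uparrow_\V}$), and it dispatches 3--7 with a single citation of the stability properties of \bref{par:stab_props_prefactn} applied to $(\SigmaDense,\SigmaClEmb)$, exactly as you do for 4, 6 and 7. The genuine divergence is item 3, which you rightly single out as the delicate one. The paper's intended derivation of 3 is also purely formal: the ``Also, if \ldots'' cancellation clause of \bref{par:stab_props_prefactn}, in its dual form, says that if the \emph{left} class of a $\V$-prefactorization-system equals $\sH^{\uparrow_\V}$ for a class $\sH$ of $\V$-\emph{monomorphisms}, then a composite lying in the left class has its second factor in the left class; this applies verbatim to $(\SigmaDense,\SigmaClEmb)$, since $\SigmaDense$ is \emph{by definition} $\SigmaClEmb^{\uparrow_\V}$ and $\SigmaClEmb\subs\M\subs\Mono_\V\B$ by $\V$-properness. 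Notably this needs no factorizations, so item 3 holds under the data of \bref{par:data_enr_orth_subcat_subord_adj} alone, independently of Assumption \bref{assn:dense_closed_factn_sys}. Thus your claim that 3 ``is not a formal consequence of the prefactorization structure'' is only half right: bare prefactorization does not suffice (your isomorphism example is apt), but the mono-generated clause does. In fairness, that clause as printed attaches the hypothesis to the wrong class --- ``$\E=\sH^{\uparrow_\V}$ for a class of $\V$-epimorphisms $\sH$'' is refutable as stated (take $\sH=\emptyset$, giving $\E=\mor\B$, $\M=\Iso\B$); the intended reading, which the paper needs again in \bref{thm:sepcompl_iff_exists_clemb}, is ``$\M=\sH^{\downarrow_\V}$ for a class of $\V$-epimorphisms $\sH$'' --- so your mistrust of it is understandable. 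Your replacement argument for 3 --- factor $g=m\cdot d$, compare the two $(\SigmaDense,\SigmaClEmb)$-factorizations of $g\cdot f$ to see that $m\cdot m'$ is invertible, hence $m$ is a retraction, hence $m\in\E\cap\M$ and so iso --- is valid, since Assumption \bref{assn:dense_closed_factn_sys} is in force throughout \S\bref{sec:cmpl_cl_dens}; it costs that assumption, but it buys independence from the enriched cancellation lemma of \cite[4.4]{Lu:EnrFactnSys}, using only uniqueness of factorizations, properness, and ordinary orthogonality.
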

\begin{proof}
1.  $e$ is $\V$-orthogonal to every $\M$-embedding and hence to every $\Sigma$-closed $\M$-embedding.
2.  $\ClEmbWrt{\Sigma} \subs \Sigma^{\downarrow_\V}$, so $\Sigma \subs \Sigma^{\downarrow_\V\uparrow_\V} \subs \ClEmbWrt{\Sigma}^{\uparrow_\V} = \DenseWrt{\Sigma}$.
3-7 follow from \bref{par:stab_props_prefactn}.
\end{proof}

\begin{ParSub} \label{par:sigma_closure}
Since $(\SigmaDense,\SigmaClEmb)$ is a $\V$-factorization-system and hence an ordinary factorization system (by \cite[5.3]{Lu:EnrFactnSys}), it determines a weakly hereditary idempotent closure operator $\overline{(-)}$ on $\M$ in $\B$ \pbref{par:cl_op_assoc_to_factn_sys}.  For each $\M$-embedding $m$, we call $\overline{m}$ the \textit{$\Sigma$-closure} of $m$; in the case that $\Sigma = \Sigma_T$, we call $\overline{m}$ the \textit{$\TT$-closure} of $m$.
\end{ParSub}

\begin{PropSub} \label{thm:compl_vs_clemb}
Let $m:B' \rightarrowtail B$ be an $\M$-embedding in $\B$, and suppose $B$ is $\Sigma$-complete.
Then
$$\text{$B'$ is $\Sigma$-complete}\;\;\Leftrightarrow\;\;\text{$m$ is $\Sigma$-closed}\;.$$
\end{PropSub}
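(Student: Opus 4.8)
The plan is to read both conditions off the single commutative square \eqref{eqn:orth_pb}, instantiated at an arbitrary morphism $h:A_1 \rightarrow A_2$ in $\Sigma$ and the $\M$-embedding $m:B' \rightarrowtail B$, taking $e := h$, $B_1 := B'$, and $B_2 := B$. With these identifications the left- and right-hand vertical legs of that square are $\B(h,B'):\B(A_2,B') \rightarrow \B(A_1,B')$ and $\B(h,B):\B(A_2,B) \rightarrow \B(A_1,B)$. The three relevant translations are then immediate from the definitions: $B$ is $\Sigma$-complete (i.e.\ $B \in \B_\Sigma = \Sigma^{\bot_\V}$) exactly when the right vertical leg is an isomorphism for every $h \in \Sigma$; $B'$ is $\Sigma$-complete exactly when the left vertical leg is an isomorphism for every $h \in \Sigma$; and $m$ is $\Sigma$-closed (i.e.\ $m \in \Sigma^{\downarrow_\V} \cap \M$) exactly when the square is a pullback for every $h \in \Sigma$.

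For the implication ``$m$ is $\Sigma$-closed $\Rightarrow$ $B'$ is $\Sigma$-complete'', I would fix $h \in \Sigma$ and use that, since $m$ is $\Sigma$-closed, the square \eqref{eqn:orth_pb} is a pullback. Since $B$ is $\Sigma$-complete, its right vertical leg $\B(h,B)$ is an isomorphism; as the left vertical leg $\B(h,B')$ is precisely the pullback of $\B(h,B)$ along the bottom horizontal map, and pullbacks of isomorphisms are isomorphisms, $\B(h,B')$ is an isomorphism as well. As $h \in \Sigma$ was arbitrary, this gives $B' \in \Sigma^{\bot_\V}$, i.e.\ $B'$ is $\Sigma$-complete.

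For the converse ``$B'$ is $\Sigma$-complete $\Rightarrow$ $m$ is $\Sigma$-closed'', I would observe that $m$ is now a morphism of the full sub-$\V$-category $\B_\Sigma$, since both of its endpoints $B$ and $B'$ are $\Sigma$-complete. Hence \bref{thm:orth_of_morphs_in_orth_subcat} applies directly and yields $h \downarrow_\V m$ for every $h \in \Sigma$, that is $m \in \Sigma^{\downarrow_\V}$; combined with $m \in \M$ this gives $m \in \SigmaClEmb$, so $m$ is $\Sigma$-closed. Concretely, the cited result amounts to the elementary fact that a commutative square both of whose vertical legs are isomorphisms is automatically a pullback, applied to the legs $\B(h,B')$ and $\B(h,B)$.

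I do not anticipate a genuine obstacle: neither Assumption \bref{assn:dense_closed_factn_sys} nor the $(\SigmaDense,\SigmaClEmb)$-factorization is needed, and both directions reduce to the two elementary pullback facts above. The only point requiring care is the bookkeeping of the square \eqref{eqn:orth_pb}, namely matching $\Sigma$-completeness of the codomain $B$ and of the domain $B'$ of $m$ to the correct (right- and left-hand) vertical legs, so that the two directions hinge respectively on ``pullback of an isomorphism is an isomorphism'' and its converse for squares with both verticals invertible.
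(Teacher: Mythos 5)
Your proof is correct and is essentially the paper's own argument: the paper likewise instantiates the orthogonality square at each $h \in \Sigma$ and the embedding $m$, observes that $\Sigma$-completeness of $B$ makes $\B(h,B)$ invertible, and concludes that the square is a pullback if and only if $\B(h,B')$ is an isomorphism. Your splitting of this biconditional into two directions --- one via ``a pullback of an isomorphism is an isomorphism,'' the other delegated to \bref{thm:orth_of_morphs_in_orth_subcat} (which the paper's one-line proof re-derives implicitly) --- is only a cosmetic repackaging of the same argument.
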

\begin{proof}
For each $h:B_1 \rightarrow B_2$ in $\Sigma$, we have a commutative square
$$
\xymatrix{
\B(B_2,B') \ar[r]^{\B(h,B')} \ar[d]_{\B(B_2,m)} & \B(B_1,B') \ar[d]^{\B(B_1,m)} \\
\B(B_2,B) \ar[r]_{\B(h,B)}                      & \B(B_1,B) 
}
$$
in which $\B(h,B)$ is iso, so the square is a pullback if and only if $\B(h,B')$ is iso. 
\end{proof}

For the remainder of this section, let us fix a $\V$-adjunction $F \dashv G:\C \rightarrow \B$ inducing $\TT$; for example, one can take $F \dashv G$ to be the Kleisli $\V$-adjunction.

\begin{PropSub} \label{thm:tc_complete}
Let $C$ be an object of $\C$.  Then $GC$ is $\Sigma$-complete.
\end{PropSub}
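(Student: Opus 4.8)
The goal is to show $GC \in \B_\Sigma = \Sigma^{\bot_\V}$, which by \bref{def:compl_sep_closed_dense} and \bref{def:orth_subcat_sigma} unwinds to the single requirement that $f \bot_\V GC$ for every morphism $f$ in $\Sigma$. The plan is to deduce this directly from the orthogonal-pair characterization of $\Sigma_F$ established earlier, rather than by any explicit computation with hom-objects.

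First I would recall that, since the fixed $\V$-adjunction $F \dashv G$ induces $\TT$, its left adjoint inverts exactly the morphisms inverted by $T$; that is, $\Sigma_F = \Sigma_T$ by \bref{thm:all_ladjs_ind_given_monad_inv_same_morphs}. Combined with the standing hypothesis $\Sigma \subs \Sigma_T$ from \bref{par:data_enr_orth_subcat_subord_adj}, this yields $\Sigma \subs \Sigma_F$. Next I would invoke \bref{thm:charns_of_sigma_p}, which identifies $\Sigma_F = (G(\ob\C))^{\top_\V}$. Hence every $f \in \Sigma$ lies in $(G(\ob\C))^{\top_\V}$, and by the meaning of $(-)^{\top_\V}$ in \bref{def:orth_subcat_sigma} this says precisely that $f \bot_\V GC'$ for every object $C'$ of $\C$. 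Specializing to $C' = C$ gives $f \bot_\V GC$ for all $f \in \Sigma$, which is exactly the assertion that $GC \in \Sigma^{\bot_\V} = \B_\Sigma$, i.e. that $GC$ is $\Sigma$-complete.

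There is essentially no obstacle here: the argument is a short chain of inclusions followed by a rewriting via already-established characterizations. Should one prefer a more hands-on route, the same conclusion follows from \bref{prop:orth_and_adj}(2): for $f \in \Sigma \subs \Sigma_F$ the image $Ff$ is an isomorphism, so $\C(Ff,C)$ is an isomorphism and thus trivially $Ff \bot_\V C$, whence $f \bot_\V GC$. The only point requiring care is to keep the direction of orthogonality straight — we need the $\bot_\V$ relation of morphisms of $\Sigma$ \emph{to the object} $GC$, as in \bref{def:orth_subcat_sigma}, and not the $\downarrow_\V$ relation between two morphisms.
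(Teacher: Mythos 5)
Your proof is correct and takes essentially the same approach as the paper: the paper's own argument is exactly the inline version of what you cite, namely $\Sigma \subs \Sigma_T = \Sigma_F$ via \bref{thm:all_ladjs_ind_given_monad_inv_same_morphs}, followed by the adjunction isomorphism $\B(h,GC) \cong \C(Fh,C)$ (the commutative square underlying \bref{prop:orth_and_adj}~2 and hence \bref{thm:charns_of_sigma_p}) to conclude that $\B(h,GC)$ is iso for each $h \in \Sigma$. Your ``hands-on'' alternative at the end is in fact the paper's proof almost verbatim, so there is no gap and nothing further to add.
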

\begin{proof}
For each $h:B_1 \rightarrow B_2$ in $\Sigma$, we have a commutative square
$$
\xymatrix{
\B(B_2,GC) \ar[r]^{\B(h,GC)} \ar[d]_\wr & \B(B_1,GC) \ar[d]^{\wr} \\
\C(FB_2,C) \ar[r]_{\C(Fh,C)}      & \C(FB_1,C) 
}
$$
whose left and right sides are isomorphisms, and since $h \in \Sigma \subs \Sigma_T = \Sigma_F$ by \bref{thm:all_ladjs_ind_given_monad_inv_same_morphs}, the bottom side is iso, so the top side is iso.
\end{proof}

\begin{PropSub} \label{thm:sepobj_compl_iff_eta_closed}
Let $B \in \B$ be $\TT$-separated.  Then
$$\text{$B$ is $\Sigma$-complete}\;\;\Leftrightarrow\;\;\text{$\eta_B:B \rightarrowtail TB$ is $\Sigma$-closed}\;.$$
\end{PropSub}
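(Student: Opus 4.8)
The plan is to reduce the equivalence directly to the two preceding propositions, \bref{thm:tc_complete} and \bref{thm:compl_vs_clemb}, by recognizing $TB$ as an object of the form $GC$. First I would unwind the hypothesis: since $B$ is $\TT$-separated, by Definition \bref{def:compl_sep_closed_dense} the unit component $\eta_B:B \rightarrow TB$ is an $\M$-embedding, so it makes sense to ask whether it is $\Sigma$-closed, and $\eta_B$ is a legitimate input to Proposition \bref{thm:compl_vs_clemb}.

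The key observation is that the fixed $\V$-adjunction $F \dashv G:\C \rightarrow \B$ induces $\TT$, so that $T = GF$ on underlying $\V$-functors; hence $TB = GFB = GC$ for $C := FB \in \C$. By Proposition \bref{thm:tc_complete}, every object of the form $GC$ is $\Sigma$-complete, so $TB$ is $\Sigma$-complete.

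It then suffices to apply Proposition \bref{thm:compl_vs_clemb} to the $\M$-embedding $m := \eta_B:B \rightarrowtail TB$, whose codomain $TB$ we have just shown to be $\Sigma$-complete. That proposition yields precisely the equivalence between $B'$ being $\Sigma$-complete and $m$ being $\Sigma$-closed, which in the present instance reads: $B$ is $\Sigma$-complete if and only if $\eta_B$ is $\Sigma$-closed. This is exactly the claimed statement. There is no substantial obstacle here; the only point requiring care is the identification $TB = GFB$, which lets us invoke the $\Sigma$-completeness of objects $GC$ from Proposition \bref{thm:tc_complete} rather than establishing it afresh.
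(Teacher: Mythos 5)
Your proof is correct and is essentially identical to the paper's: both identify $TB = GFB$ as an object of the form $GC$, invoke Proposition \bref{thm:tc_complete} to conclude it is $\Sigma$-complete, and then apply Proposition \bref{thm:compl_vs_clemb} to the $\M$-embedding $\eta_B$ (which is an embedding precisely because $B$ is $\TT$-separated).
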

\begin{proof}
Since $TB = GFB$ is $\Sigma$-complete by \bref{thm:tc_complete} and $\eta_B$ is an $\M$-embedding, this follows from \bref{thm:compl_vs_clemb}.
\end{proof}

\begin{PropSub} \label{thm:sepcompl_iff_exists_clemb}
An object $B \in \B$ is $\TT$-separated (resp. $\Sigma$-complete and $\TT$-separated) iff there exists an $\M$-embedding (resp. $\Sigma$-closed $\M$-embedding) \mbox{$m:B \rightarrowtail GC$} for some $C \in \C$.
\end{PropSub}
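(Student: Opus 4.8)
The plan is to separate the two biconditionals and to observe that the parenthetical (``$\Sigma$-complete and $\TT$-separated'') version reduces to the plain (``$\TT$-separated'') version together with \bref{thm:tc_complete} and \bref{thm:compl_vs_clemb}. For the forward implications I would simply take $C := FB$: if $B$ is $\TT$-separated then $\eta_B : B \rightarrowtail TB = G(FB)$ is by definition an $\M$-embedding, which settles the plain case; and if moreover $B$ is $\Sigma$-complete, then \bref{thm:sepobj_compl_iff_eta_closed} shows that $\eta_B$ is $\Sigma$-closed, so $\eta_B : B \rightarrowtail G(FB)$ is the required $\Sigma$-closed $\M$-embedding.

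The substance is the backward implication of the plain case: given any $\M$-embedding $m : B \rightarrowtail GC$, I must show $\eta_B \in \M$, i.e.\ that $B$ is $\TT$-separated. Writing $\bar m : FB \rightarrow C$ for the adjoint transpose of $m$, the triangle identity together with the naturality of $\eta$ yields the standard factorization $m = G\bar m \cdot \eta_B$. The temptation is to left-cancel $G\bar m$, but this is illegitimate since $G\bar m$ need not lie in $\M$, and this is precisely the main obstacle. To circumvent it I would form the $\V$-pullback of the $\M$-embedding $m$ along $G\bar m$ --- available because pullbacks of $\M$-morphisms along arbitrary morphisms belong to the closure-operator framework invoked in \bref{par:sigma_closure} --- obtaining $q_1 : Q \rightarrowtail TB$, which lies in $\M$ by the pullback-stability of \bref{par:stab_props_prefactn}, together with $q_2 : Q \rightarrow B$ satisfying $G\bar m \cdot q_1 = m \cdot q_2$. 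Since $G\bar m \cdot \eta_B = m = m \cdot 1_B$, the pair $(\eta_B, 1_B)$ is a cone over this pullback and so induces $u : B \rightarrow Q$ with $q_1 \cdot u = \eta_B$ and $q_2 \cdot u = 1_B$. The second equation exhibits $u$ as a section, hence $u \in \M$ by properness \bref{par:vproper_prefactn}, and therefore $\eta_B = q_1 \cdot u$ is a composite of $\M$-embeddings, so $\eta_B \in \M$ as desired.

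Finally I would assemble the parenthetical backward implication: given a $\Sigma$-closed $\M$-embedding $m : B \rightarrowtail GC$, the plain case just proved shows that $B$ is $\TT$-separated, while \bref{thm:tc_complete} gives that $GC$ is $\Sigma$-complete, whence \bref{thm:compl_vs_clemb} (applied to the $\Sigma$-closed $\M$-embedding $m$ into the $\Sigma$-complete object $GC$) gives that $B$ is $\Sigma$-complete. I expect the only genuine work to be the pullback argument establishing $\eta_B \in \M$; every other step is a direct appeal to the cited results, and the key insight is that the failure of naive left-cancellation is repaired by pulling $m$ back along its transpose and using the triangle identity to produce the splitting.
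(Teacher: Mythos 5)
Your forward implication and your reduction of the parenthetical case (via \bref{thm:sepobj_compl_iff_eta_closed}, \bref{thm:tc_complete}, and \bref{thm:compl_vs_clemb}) are exactly the paper's argument. The gap is in your backward implication for the plain case: you form the $\V$-pullback of $m$ along $G\bar{m}$, but no hypothesis in force guarantees that this pullback \emph{exists}. The standing data of \S\bref{sec:cmpl_cl_dens} are only \bref{par:data_enr_orth_subcat_subord_adj} together with Assumption \bref{assn:dense_closed_factn_sys}; neither provides any pullbacks. The results you appeal to do not supply them either: \bref{par:stab_props_prefactn} and \bref{thm:props_sigma_cl_dens} assert only that $\M$ (resp.\ $\SigmaClEmb$) is \emph{closed under} whatever $\V$-pullbacks happen to exist, and the closure operator of \bref{par:sigma_closure} is constructed from $(\SigmaDense,\SigmaClEmb)$-factorizations, not from pullbacks (the pullback hypothesis of \S\bref{sec:cl_ops} is not part of the present section's assumptions). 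Avoiding such limit assumptions is precisely the point of the paper's formulation --- Assumption \bref{assn:dense_closed_factn_sys} is offered as a \emph{substitute} for $\M$-subobject-completeness --- so your step can genuinely fail under the stated hypotheses; your argument proves the proposition only under an added completeness assumption such as existence of $\V$-pullbacks of $\M$-embeddings.

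Moreover, the ``main obstacle'' you circumvent is not an obstacle: the left-cancellation you want is available without any condition on the first factor $G\bar{m}$. By \bref{par:stab_props_prefactn}, when $\E$ consists of $\V$-epimorphisms (which holds here since $(\E,\M)$ is $\V$-proper), $g \cdot f \in \M$ already implies $f \in \M$. This is how the paper concludes: writing $m = Gm^\sharp \cdot \eta_B$ with $m \in \M$, one gets $\eta_B \in \M$ immediately. Your pullback-and-section construction (whose internal steps are all correct, including $u \in \M$ because sections lie in $\M$ for a $\V$-proper system, \bref{par:vproper_prefactn}) is in effect a proof of this cancellation property in the special case where the relevant pullback exists; to repair your proof under the paper's hypotheses, replace that construction by a direct appeal to the cancellation property of \bref{par:stab_props_prefactn}.
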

\begin{proof}
If $B$ is $\TT$-separated, then $\eta_B:B \rightarrow GFB$ is an $\M$-embedding; if $B$ is also $\Sigma$-complete, then by \bref{thm:sepobj_compl_iff_eta_closed}, $\eta_B$ is $\Sigma$-closed.  Conversely, if $m:B \rightarrowtail GC$ is an $\M$-embedding then we have a commutative triangle
$$
\xymatrix{
B \ar[r]^{\eta_B} \ar@{ >->}[rd]_m & GFB \ar@{-->}[d]^{Gm^\sharp} \\
                                  & GC                
}
$$
for a unique morphism $m^\sharp$ in $\C$, so $\eta_B \in \M$ by \bref{par:stab_props_prefactn} (since $\E \subseteq \Epi_\V\B$), so $B$ is $\TT$-separated.  If the given embedding $m$ is also $\Sigma$-closed, then since $GC$ is $\Sigma$-complete by \bref{thm:tc_complete}, we deduce by \bref{thm:compl_vs_clemb} that $B$ is $\Sigma$-complete.
\end{proof}

\begin{CorSub}\label{thm:carriers_of_free_talgs_sep_compl}
For each $B \in \B$, $TB$ is $\Sigma$-complete and $\TT$-separated. 
\end{CorSub}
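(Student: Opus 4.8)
The plan is to recognize $TB$ as an object of the form $GC$ and then read off both conclusions from \bref{thm:sepcompl_iff_exists_clemb}. Since the fixed $\V$-adjunction $F \dashv G$ induces $\TT$, we have $T = GF$, so $TB = G(FB)$; taking $C := FB \in \C$ exhibits $TB$ as $GC$. I would then exhibit a single $\Sigma$-closed $\M$-embedding $m : TB \rightarrowtail GC$ and apply the ``$\Sigma$-complete and $\TT$-separated'' half of \bref{thm:sepcompl_iff_exists_clemb}.

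The natural candidate for $m$ is the identity $1_{TB} : TB \rightarrow TB = GC$, and the only point to verify is that an isomorphism qualifies as a $\Sigma$-closed $\M$-embedding, i.e. lies in $\Sigma^{\downarrow_\V} \cap \M$. That $1_{TB} \in \M$ holds in any $\V$-prefactorization-system, since an isomorphism is $\V$-orthogonal to every morphism and hence lies in $\E^{\downarrow_\V} = \M$; concretely, when $m$ is iso the two horizontal edges of the orthogonality square \eqref{eqn:orth_pb} are isomorphisms, so the square is automatically a pullback. The same computation shows $1_{TB} \in \Sigma^{\downarrow_\V}$: for every $e \in \Sigma$ the square for $e \downarrow_\V 1_{TB}$ has identity maps along its horizontal edges and is thus trivially a pullback. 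Granting this, \bref{thm:sepcompl_iff_exists_clemb} delivers at once that $TB$ is $\Sigma$-complete and $\TT$-separated.

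I expect no real obstacle here, as the statement is a direct corollary; the ``hard part'' is merely the bookkeeping observation that isomorphisms count as $\Sigma$-closed $\M$-embeddings. Should one prefer to avoid even that observation, the two conclusions can be decoupled: $\Sigma$-completeness of $TB = GC$ is precisely \bref{thm:tc_complete}, whereas $\TT$-separatedness follows because $\eta_{TB} : TB \rightarrow T^2 B$ is split by $\mu_B$ (the monad unit law gives $\mu_B \cdot \eta_{TB} = 1_{TB}$), so that $\eta_{TB}$ is a section and therefore lies in $\M$ by the remarks on $\V$-proper $\V$-prefactorization-systems in \bref{par:vproper_prefactn}.
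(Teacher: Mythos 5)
Your proposal is correct and matches the paper's own proof, which is exactly the one-line observation that $1_{TB}:TB \rightarrow TB = GFB$ is a $\Sigma$-closed $\M$-embedding, followed by an appeal to \bref{thm:sepcompl_iff_exists_clemb}. Your verification that isomorphisms lie in $\Sigma^{\downarrow_\V} \cap \M$ (and your alternative splitting of the two conclusions via \bref{thm:tc_complete} and the section $\eta_{TB}$) is sound bookkeeping that the paper leaves implicit.
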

\begin{proof}
$1_{TB}:TB \rightarrow TB = GFB$ is a $\Sigma$-closed $\M$-embedding.
\end{proof}

\begin{DefSub} \label{def:completion}
For each $B \in \B$, let 
$$
\xymatrix{
B \ar[rr]^{\eta_B} \ar[dr]_{\rho_B} &                        & TB \\
                                    & KB \ar@{ >->}[ur]_{\iota_B} & 
}
$$
be the $(\SigmaDense,\SigmaClEmb)$-factorization of $\eta_B$.
\end{DefSub}

\begin{PropSub} \label{thm:kb_sep_compl}
For each $B \in \B$, $KB$ is $\Sigma$-complete and $\TT$-separated.
\end{PropSub}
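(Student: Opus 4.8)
The plan is to recognize that the object $KB$ arrives already equipped with exactly the kind of morphism that the characterization in \bref{thm:sepcompl_iff_exists_clemb} asks for, so that essentially no further work is required. Recall from \bref{def:completion} that $\iota_B:KB \rightarrowtail TB$ is, by construction, the second factor of the $(\SigmaDense,\SigmaClEmb)$-factorization of $\eta_B$, and hence is a $\Sigma$-closed $\M$-embedding.

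The key observation is that since the $\V$-adjunction $F \dashv G:\C \rightarrow \B$ induces $\TT$, we have $T = GF$ and therefore $TB = GFB = G(FB)$, so that $\iota_B$ is a $\Sigma$-closed $\M$-embedding whose codomain is of the form $GC$ for the object $C := FB \in \C$. First I would simply invoke the second (parenthetical ``resp.'') clause of \bref{thm:sepcompl_iff_exists_clemb}: the existence of a $\Sigma$-closed $\M$-embedding $\iota_B:KB \rightarrowtail G(FB)$ is precisely what that proposition requires in order to conclude that $KB$ is $\Sigma$-complete and $\TT$-separated.

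There is no real obstacle here; the content of the statement has already been front-loaded into \bref{thm:sepcompl_iff_exists_clemb} (which in turn rests on \bref{thm:tc_complete} and \bref{thm:compl_vs_clemb}), and the only thing to check is the bookkeeping identification $TB = GFB$ together with the fact that $\iota_B$ lies in $\SigmaClEmb$ by the very definition of the factorization. Thus the proof reduces to a single citation, and I would keep it to one or two lines accordingly.
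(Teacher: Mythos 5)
Your proof is correct and is essentially identical to the paper's own argument: the paper likewise observes that $TB = GFB$ and cites \bref{thm:sepcompl_iff_exists_clemb}, the $\Sigma$-closed $\M$-embedding in question being exactly $\iota_B$ from \bref{def:completion}. Nothing is missing.
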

\begin{proof}
Since $TB = GFB$, this follows from \bref{thm:sepcompl_iff_exists_clemb}.
\end{proof}

The following lemma was inspired by an idea employed in the proof of 3.3 of \cite{CHK} in the non-enriched context.

\begin{LemSub} \label{thm:dense_and_sent_to_section_implies_iso}
Let $f:B_1 \rightarrow B_2$ be a $\Sigma$-dense morphism for which $Ff:FB_1 \rightarrow FB_2$ is a section.  Then $Ff$ is iso.
\end{LemSub}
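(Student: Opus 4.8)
The plan is to show that $Ff$ is $\V$-orthogonal to itself, $Ff \downarrow_\V Ff$, for then $Ff$ is an isomorphism by \cite[3.7]{Lu:EnrFactnSys} (the same fact used in \bref{thm:orth_to_all_is_iso} and \bref{thm:refl_subcats_are_orth_subcats}). The route to $Ff \downarrow_\V Ff$ passes through the adjunction--orthogonality correspondence \bref{prop:orth_and_adj}(1), which rewrites $Ff \downarrow_\V Ff$ as $f \downarrow_\V G(Ff)$; the latter will follow from the $\Sigma$-density of $f$ once I verify that $G(Ff)$ is a $\Sigma$-closed $\M$-embedding.

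First I would note that $G(Ff) = Tf$, since $T = GF$, and that this morphism is a section: any functor preserves split monomorphisms, so a retraction $s$ with $s \cdot Ff = 1$ yields $Gs \cdot G(Ff) = 1$. By \bref{par:vproper_prefactn}, every section lies in $\M$, so $G(Ff)$ is an $\M$-embedding.

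Next I would check that $G(Ff)$ is $\Sigma$-closed, i.e.\ $G(Ff) \in \Sigma^{\downarrow_\V}$. Fixing $h \in \Sigma$, we have $\Sigma \subs \Sigma_T = \Sigma_F$ by \bref{thm:all_ladjs_ind_given_monad_inv_same_morphs}, so $Fh$ is an isomorphism in $\C$ and is therefore $\V$-orthogonal to every morphism by \bref{thm:orth_to_all_is_iso}; in particular $Fh \downarrow_\V Ff$. Applying \bref{prop:orth_and_adj}(1) then gives $h \downarrow_\V G(Ff)$. As $h \in \Sigma$ was arbitrary, $G(Ff) \in \Sigma^{\downarrow_\V} \cap \M = \SigmaClEmb$.

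Finally, since $f \in \SigmaDense = \SigmaClEmb^{\uparrow_\V}$ and $G(Ff)$ is a $\Sigma$-closed $\M$-embedding, we obtain $f \downarrow_\V G(Ff)$; a second application of \bref{prop:orth_and_adj}(1) yields $Ff \downarrow_\V Ff$, and \cite[3.7]{Lu:EnrFactnSys} concludes that $Ff$ is iso. There is no serious obstacle once the target $Ff \downarrow_\V Ff$ is identified; the crux is simply the double use of the adjunction--orthogonality bridge, together with the observation that the hypothesis ``$Ff$ is a section'' is exactly what forces $G(Ff)$ into $\M$, so that it qualifies as a $\Sigma$-closed $\M$-embedding against which the $\Sigma$-dense $f$ can be tested.
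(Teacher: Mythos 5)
Your proof is correct. Its first half is the same as the paper's: both arguments turn on showing that $GFf$ is a $\Sigma$-closed $\M$-embedding, using the section hypothesis together with \bref{par:vproper_prefactn} to get $GFf \in \M$, and $\Sigma \subseteq \Sigma_T = \Sigma_F$ to get $GFf \in \Sigma^{\downarrow_\V}$ (your inline derivation of the latter from \bref{prop:orth_and_adj} and \bref{thm:orth_to_all_is_iso} is precisely the proof of \bref{thm:charns_of_sigma_p}, which is what the paper cites at this step). The proofs diverge in how they exploit the resulting orthogonality $f \downarrow_\V GFf$. The paper applies it to the naturality square of $\eta$ at $f$ to extract a diagonal filler $k$ with $GFf \cdot k = \eta_{B_2}$, then transposes under $F \dashv G$ to get $Ff \cdot k^\sharp = 1_{FB_2}$; thus $Ff$ is a retraction as well as a section, hence iso, with $k^\sharp$ an explicit inverse. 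You instead transport the orthogonality relation itself back across the adjunction by a second application of \bref{prop:orth_and_adj}(1), obtaining $Ff \downarrow_\V Ff$ and concluding by self-orthogonality via \cite[3.7]{Lu:EnrFactnSys} (the same fact the paper invokes in the proof of \bref{thm:refl_subcats_are_orth_subcats}). The two finishes cost about the same, but yours uses the hypothesis that $Ff$ is a section only once --- to place $GFf$ in $\M$ --- whereas the paper uses it a second time at the end (split mono plus split epi gives iso); your argument therefore establishes the marginally stronger statement that $Ff$ is invertible whenever $f$ is $\Sigma$-dense and $GFf \in \M$. What the paper's finish buys in exchange is an explicit formula for the inverse of $Ff$ as the transpose $k^\sharp$, rather than an abstract existence conclusion.
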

\begin{proof}
The periphery of the following diagram commutes.
$$
\xymatrix{
B_1 \ar[r]^{\eta_{B_1}} \ar[d]_f        & GFB_1 \ar[d]^{GFf} \\
B_2 \ar[r]_{\eta_{B_2}} \ar@{-->}[ur]^k & GFB_2
}
$$
Also, $GFf$ is a section and hence is an $\M$-embedding (by \bref{par:vproper_prefactn}).  Further, by \bref{thm:charns_of_sigma_p}, we have that $GFf \in \Sigma_F^{\downarrow_\V} \subs \Sigma^{\downarrow_\V}$ (using the fact that $\Sigma \subs \Sigma_T = \Sigma_F$), so $GFf$ is a $\Sigma$-closed $\M$-embedding.  Hence, since $f$ is $\Sigma$-dense, there is a unique morphism $k$ making the above diagram commute.  In particular, $GFf \cdot k = \eta_{B_2}$; taking the transposes of both sides of this equation, with respect to the adjunction $F \dashv G$, we find that $Ff \cdot k^\sharp = 1_{FB_2}$ where $k^\sharp:FB_2 \rightarrow FB_1$ is the transpose of $k$.  Hence $Ff$ is a split epi and hence, being also a split mono, is iso.
\end{proof}

\begin{PropSub} \label{thm:rho_inverted_by_s}
For each $B \in \B$, $\rho_B:B \rightarrow KB$ is inverted by $F$.
\end{PropSub}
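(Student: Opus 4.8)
The plan is to reduce this immediately to Lemma \bref{thm:dense_and_sent_to_section_implies_iso}, which says that a $\Sigma$-dense morphism sent by $F$ to a section is in fact inverted by $F$. Since $\rho_B$ is $\Sigma$-dense by construction \pbref{def:completion}, it suffices to show that $F\rho_B$ is a section, i.e.\ admits a left inverse.

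First I would recall that, because $F \dashv G$ induces $\TT$, the monad unit $\eta_B:B \rightarrow TB = GFB$ is precisely the unit of the adjunction at $B$. Hence the triangle identity gives $\varepsilon_{FB} \cdot F\eta_B = 1_{FB}$, exhibiting $F\eta_B:FB \rightarrow FGFB$ as a split monomorphism with left inverse the counit component $\varepsilon_{FB}$.

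Next I would apply $F$ to the factorization $\eta_B = \iota_B \cdot \rho_B$ of \bref{def:completion}, obtaining $F\eta_B = F\iota_B \cdot F\rho_B$. Since $F\eta_B$ has a left inverse $\varepsilon_{FB}$, the composite $\left(\varepsilon_{FB} \cdot F\iota_B\right) \cdot F\rho_B$ equals $1_{FB}$, so $F\rho_B$ is a section with explicit left inverse $\varepsilon_{FB} \cdot F\iota_B$.

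Finally, since $\rho_B$ is $\Sigma$-dense and $F\rho_B$ is a section, Lemma \bref{thm:dense_and_sent_to_section_implies_iso} yields that $F\rho_B$ is an isomorphism, i.e.\ $\rho_B$ is inverted by $F$, as desired. There is no real obstacle here: the only substantive content is the observation that $F\eta_B$ is split by the triangle identity, after which the result is a formal consequence of the factorization and the preceding lemma.
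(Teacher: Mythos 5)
Your proof is correct and is essentially identical to the paper's: the paper also applies the factorization $\eta_B = \iota_B \cdot \rho_B$ of \bref{def:completion}, takes transposes under the adjunction (which is exactly your triangle-identity computation $\varepsilon_{FB} \cdot F\iota_B \cdot F\rho_B = 1_{FB}$) to see that $F\rho_B$ is a section, and then invokes Lemma \bref{thm:dense_and_sent_to_section_implies_iso}.
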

\begin{proof}
Taking the transposes of each side of the equation
$$\left(B \xrightarrow{\rho_B} KB \xrightarrow{\iota_B} GFB\right) = \eta_B$$
under the adjunction $F \nsststile{\varepsilon}{\eta} G$, we obtain
$$\left(FB \xrightarrow{F\rho_B} FKB \xrightarrow{F\iota_B} FGFB \xrightarrow{\varepsilon_{FB}} FB\right) = 1_{FB}\;,$$
so $F\rho_B$ is a section, so since $\rho_B$ is $\Sigma$-dense, \bref{thm:dense_and_sent_to_section_implies_iso} applies, and we deduce that $F\rho_B$ is iso.
\end{proof}

\begin{PropSub} \label{thm:rho_orth_to_each_sep_compl_ob}
Let $B,B' \in \B$ and suppose $B'$ is $\TT$-separated and $\Sigma$-complete.  Then $\rho_B \bot_\V B'$.
\end{PropSub}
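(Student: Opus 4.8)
The plan is to produce the required orthogonality $\rho_B \bot_\V B'$ by applying the $\Sigma$-density of $\rho_B$ against a single, carefully chosen $\Sigma$-closed $\M$-embedding manufactured out of $B'$, namely its unit component $\eta_{B'}$. First I would use the hypothesis that $B'$ is both $\TT$-separated and $\Sigma$-complete: by \bref{thm:sepobj_compl_iff_eta_closed} this says exactly that the $\M$-embedding $\eta_{B'}:B' \rightarrowtail TB'$ is $\Sigma$-closed, i.e. $\eta_{B'} \in \SigmaClEmb$. Since $\rho_B$ is $\Sigma$-dense, it lies in $\SigmaClEmb^{\uparrow_\V}$, and therefore $\rho_B \downarrow_\V \eta_{B'}$.

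Spelling out this orthogonality via \eqref{eqn:orth_pb} (with $e = \rho_B$ and $m = \eta_{B'}$), I obtain that the square
$$
\xymatrix {
\B(KB, B') \ar[rr]^{\B(KB, \eta_{B'})} \ar[d]_{\B(\rho_B, B')}  & & \B(KB, TB') \ar[d]^{\B(\rho_B, TB')} \\
\B(B, B')  \ar[rr]^{\B(B, \eta_{B'})}                     & & \B(B, TB')
}
$$
is a pullback in $\V$. The next step is to observe that the right-hand vertical map $\B(\rho_B, TB')$ is an isomorphism. Indeed $TB' = GFB'$ lies in $G(\ob\C)$, while by \bref{thm:rho_inverted_by_s} the morphism $\rho_B$ is inverted by $F$, so $\rho_B \in \Sigma_F = (G(\ob\C))^{\top_\V}$ by \bref{thm:charns_of_sigma_p}; in particular $\rho_B \bot_\V GFB'$, which is precisely the assertion that this right-hand edge is iso.

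Finally I would invoke the elementary fact that in a pullback square in which one of the two parallel vertical edges is an isomorphism, the other is as well (pulling an isomorphism back along any morphism yields an isomorphism). Applied to the square above, this forces the left vertical map $\B(\rho_B, B')$ to be an isomorphism, which is exactly the statement $\rho_B \bot_\V B'$. I do not expect any genuine obstacle here: the entire argument is the assembly of \bref{thm:sepobj_compl_iff_eta_closed}, \bref{thm:rho_inverted_by_s}, and \bref{thm:charns_of_sigma_p}, and the only point demanding a moment's care is keeping straight which pair of parallel edges of the orthogonality square carries the isomorphism so that the pullback-cancellation is applied in the correct direction.
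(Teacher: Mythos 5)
Your proof is correct and takes essentially the same route as the paper's: both obtain the pullback square from $\rho_B \downarrow_\V \eta_{B'}$ (via \bref{thm:sepobj_compl_iff_eta_closed} and the $\Sigma$-density of $\rho_B$), both show that the parallel edge $\B(\rho_B,TB')$ is invertible using \bref{thm:rho_inverted_by_s}, and both conclude that the pullback of this isomorphism, namely $\B(\rho_B,B')$, is an isomorphism. The only cosmetic difference is that where you cite \bref{thm:charns_of_sigma_p} to pass from ``$F\rho_B$ is iso'' to $\rho_B \bot_\V GFB'$, the paper unpacks the same fact by gluing a second square of adjunction isomorphisms $\B(-,GF B')\cong\C(F-,FB')$ beneath the pullback square.
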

\begin{proof}
We have a commutative diagram as follows.
$$
\xymatrix{
\B(KB,B') \ar[rr]^{\B(\rho_B,B')} \ar[d]_{\B(KB,\eta_{B'})} & & \B(B,B') \ar[d]^{\B(B,\eta_{B'})} \\
\B(KB,GFB') \ar[rr]^{\B(\rho_B,GFB')} \ar[d]^\wr           & & \B(B,GFB') \ar[d]^\wr \\
\C(FKB,FB') \ar[rr]^{\C(F\rho_B,FB')}                       & & \C(FB,FB')
}
$$
Since $B'$ is $\TT$-separated and $\Sigma$-complete, we have by \bref{thm:sepobj_compl_iff_eta_closed} that $\eta_{B'}$ is a $\Sigma$-closed $\M$-embedding, so since $\rho_B$ is $\Sigma$-dense, $\rho_B \downarrow_\V \eta_{B'}$, so the upper square is a pullback.  Also, $F\rho_B$ is iso by \bref{thm:rho_inverted_by_s}, so the left, bottom, and right sides of the lower square are iso.  Therefore $\B(\rho_B,GFB')$ is iso and hence its pullback $\B(\rho_B,B')$ is iso.
\end{proof}

\begin{ThmSub}\label{thm:refl_orth}
Let $\TT$ be a $\V$-monad on a $\V$-category $\B$ equipped with a $\V$-proper $\V$-prefactorization-system $(\E,\M)$, and let $\Sigma \subs \Sigma_T$.  Suppose that every morphism in $\B$ factors as a $\Sigma$-dense morphism followed by a $\Sigma$-closed $\M$-embedding.  Then the morphisms $\rho_B:B \rightarrow KB$ ($B \in \B$) of \bref{def:completion} exhibit the $\V$-category $\B_{(\TT,\Sigma)}$ of $\Sigma$-complete $\TT$-separated objects \pbref{def:compl_sep_closed_dense} as a $\V$-reflective-subcategory of $\B$.  
\end{ThmSub}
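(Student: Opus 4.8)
The plan is to apply \bref{prop:adjn_via_radj_and_unit} with the full inclusion $J:\B_{(\TT,\Sigma)} \hookrightarrow \B$ in the role of the right adjoint, taking $KB$ as the object "$FB$" and $\rho_B:B \rightarrow KB = J(KB)$ of \bref{def:completion} as the candidate unit. That proposition will then furnish a left $\V$-adjoint $K$ for which the $\rho_B$ are the components of the unit of a $\V$-adjunction $K \dashv J$, which is exactly what is needed to exhibit $\B_{(\TT,\Sigma)}$ as a $\V$-reflective-subcategory \pbref{par:refl_subcats_and_idm_mnds}. Thus the entire argument reduces to verifying the two hypotheses of \bref{prop:adjn_via_radj_and_unit}: that each $KB$ genuinely lies in $\B_{(\TT,\Sigma)}$, and that for every object $B'$ of $\B_{(\TT,\Sigma)}$ the associated comparison map in $\V$ is an isomorphism.

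For the first hypothesis I would simply invoke \bref{thm:kb_sep_compl}, which already establishes that each $KB$ (the $\Sigma$-dense/$\Sigma$-closed factorization object of $\eta_B$) is $\Sigma$-complete and $\TT$-separated, hence an object of $\B_{(\TT,\Sigma)}$. For the second, observe that since $J$ is a full inclusion it is the identity on hom-objects, so the composite $\phi_{BB'}$ of \bref{prop:adjn_via_radj_and_unit} collapses to the single morphism $\B(\rho_B,B'):\B(KB,B') \rightarrow \B(B,B')$. Requiring this to be an isomorphism is precisely the condition $\rho_B \bot_\V B'$ of \bref{def:orth_subcat_sigma}, and this is exactly the content of \bref{thm:rho_orth_to_each_sep_compl_ob}, applied to the $\TT$-separated $\Sigma$-complete object $B'$.

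Before invoking the recognition criterion I would record that $\B_{(\TT,\Sigma)}$ is a full replete sub-$\V$-category, as demanded by \bref{par:refl_subcats_and_idm_mnds}. Fullness is built into \bref{def:compl_sep_closed_dense}, and repleteness splits into two pieces: $\Sigma$-completeness is stable under isomorphism because $\B_\Sigma = \Sigma^{\bot_\V}$ is replete, while $\TT$-separatedness transports across an isomorphism $i:B' \rightarrow B$ by naturality of $\eta$, writing $\eta_{B'} = (Ti)^{-1} \cdot \eta_B \cdot i$ and using that isomorphisms lie in $\M$ and that $\M$ is closed under composition \pbref{par:vproper_prefactn}, \pbref{par:stab_props_prefactn}.

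I do not anticipate any serious obstacle: the theorem is essentially an assembly of the preceding results, with the real work already packaged into \bref{thm:rho_orth_to_each_sep_compl_ob} (orthogonality of the unit $\rho_B$ to the separated complete objects) and \bref{thm:kb_sep_compl} (that the completion $KB$ lands in the subcategory). The only genuine, if routine, verification beyond citation is the repleteness check above; everything else is bookkeeping in applying the enriched-adjunction recognition criterion \bref{prop:adjn_via_radj_and_unit}.
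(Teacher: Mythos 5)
Your proposal is correct and takes essentially the same route as the paper's own proof, which likewise combines \bref{thm:kb_sep_compl} (to see that $KB$ lands in $\B_{(\TT,\Sigma)}$), \bref{thm:rho_orth_to_each_sep_compl_ob} (to see that $\B(\rho_B,B')$ is an isomorphism), and the recognition criterion \bref{prop:adjn_via_radj_and_unit}. The only addition is your explicit repleteness check, which the paper leaves implicit; it is a harmless and correct refinement.
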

\begin{proof}
For each $B \in \B$ we have by \bref{thm:kb_sep_compl} that $KB$ lies in $\B_{(\TT,\Sigma)}$, and for each $B' \in \B_{(\TT,\Sigma)}$,
$$\B_{(\TT,\Sigma)}(KB,B') = \B(KB,B') \xrightarrow{\B(\rho_B,B')} \B(B,B')$$
is iso by \bref{thm:rho_orth_to_each_sep_compl_ob}.  The result follows by \bref{prop:adjn_via_radj_and_unit}.
\end{proof}

\begin{RemSub}
The hypothesis in \bref{thm:refl_orth} that $(\SigmaDense,\SigmaClEmb)$-factorizations exist (i.e., Assumption \bref{assn:dense_closed_factn_sys}) is satisfied as soon as $\B$ is \mbox{\textit{$\M$-subobject-complete}} \pbref{par:m_subobj_compl}.
\end{RemSub}

\begin{DefSub}
Given data satisfying the hypotheses of \bref{thm:refl_orth}, we call the idempotent $\V$-monad $\tTT_\Sigma$ on $\B$ induced by the resulting $\V$-reflection $K \nsststile{}{\rho} J:\B_{(\TT,\Sigma)} \hookrightarrow \B$ the \textit{$\TT$-separated $\Sigma$-completion $\V$-monad}.
\end{DefSub}

\begin{PropSub}\label{thm:suppl_results_on_tsepsigmacompl_mnd}
Suppose given data satisfying the hypotheses of \bref{thm:refl_orth}.
\begin{enumerate}
\item There is a unique morphism of $\V$-monads $\iota:\tTT_\Sigma \rightarrow \TT$.
\item Each component of $\iota$ is a $\Sigma$-closed $\M$-embedding.
\item Each component of the unit $\rho:1_\B \rightarrow \tT_\Sigma$ of $\tTT_\Sigma$ is a $\Sigma$-dense morphism.
\item Each $\V$-adjunction inducing $\TT$ factors through the $\V$-reflection $K \dashv J:\B_{(\TT,\Sigma)} \hookrightarrow \B$ determined by $\tTT_\Sigma$.
\end{enumerate}
\end{PropSub}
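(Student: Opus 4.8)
The plan is to obtain all four parts from results already in hand, the workhorses being Proposition \bref{thm:charns_adj_factn_via_mnd} and Corollary \bref{thm:carriers_of_free_talgs_sep_compl}. Throughout I would use that $\tTT_\Sigma$ is by definition the idempotent $\V$-monad induced by the $\V$-reflection $K \nsststile{}{\rho} J:\B_{(\TT,\Sigma)} \hookrightarrow \B$ supplied by \bref{thm:refl_orth}, so that its underlying endofunctor is $\tT_\Sigma = JK$ and its unit has components precisely the morphisms $\rho_B:B \rightarrow KB$ introduced in \bref{def:completion}.

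With this identification, Part 3 is immediate: by construction each $\rho_B$ is the first factor of a $(\SigmaDense,\SigmaClEmb)$-factorization and hence is $\Sigma$-dense. For Parts 1 and 4 I would apply Proposition \bref{thm:charns_adj_factn_via_mnd} with $\SSS := \tTT_\Sigma$, whose associated $\V$-reflection is $K \dashv J$. By \bref{thm:carriers_of_free_talgs_sep_compl}, the subcategory $\B_{(\TT,\Sigma)}$ contains every object $TB$ ($B \in \B$), which is exactly condition 2 of that proposition; so conditions 1 and 3 hold as well. Condition 1 yields the (necessarily unique, by \bref{thm:morph_on_idm_mnd}) morphism of $\V$-monads $\iota:\tTT_\Sigma \rightarrow \TT$, giving Part 1. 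Condition 3 asserts that the fixed adjunction $F \dashv G$ factors through $K \dashv J$; and since condition 2 is plainly independent of the choice of adjunction inducing $\TT$, the same equivalence gives Part 4 for \emph{every} such adjunction.

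The one point demanding care is Part 2, which turns on identifying the abstractly-defined components of $\iota$ with the concrete morphisms $\iota_B$ of \bref{def:completion}. By Corollary \bref{thm:charn_comps_morph_on_idm_mnd}, the component of $\iota$ at $B$ is the \emph{unique} morphism $\tT_\Sigma B \rightarrow TB$ whose composite with $\rho_B$ equals $\eta_B$. The second factor $\iota_B:KB \rightarrow TB$ of the factorization in \bref{def:completion} satisfies $\iota_B \cdot \rho_B = \eta_B$ and has the correct domain $\tT_\Sigma B = KB$ and codomain $TB$, so by that uniqueness it coincides with the component of $\iota$. As that factor is by construction a $\Sigma$-closed $\M$-embedding, Part 2 follows.

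I expect this last identification to be the only genuinely non-formal step: recognizing that the two a priori distinct descriptions of $\iota_B$ — one as the canonically induced component of the monad morphism, the other as the closed factor of $\eta_B$ — agree by virtue of their shared universal property. Everything else is a direct appeal to the cited propositions, so no further calculation should be required.
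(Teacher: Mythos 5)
Your proposal is correct and follows essentially the same route as the paper's own proof: Parts 1 and 4 via \bref{thm:carriers_of_free_talgs_sep_compl} and \bref{thm:charns_adj_factn_via_mnd}, Part 2 by identifying the components of $\iota$ with the $\Sigma$-closed $\M$-embeddings $\iota_B$ of \bref{def:completion} using the uniqueness in \bref{thm:charn_comps_morph_on_idm_mnd}, and Part 3 immediately from the construction. Your write-up simply makes explicit the details (including the adjunction-independence of condition 2 in \bref{thm:charns_adj_factn_via_mnd}, which underlies Part 4) that the paper's terse proof leaves implicit.
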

\begin{proof}
1 and 4 follow from \bref{thm:carriers_of_free_talgs_sep_compl} and \bref{thm:charns_adj_factn_via_mnd}.  By \bref{thm:charn_comps_morph_on_idm_mnd} and \bref{def:completion}, the components of the resulting morphism of $\V$-monads are necessarily the $\Sigma$-closed $\M$-embeddings $\iota_B$ given in \bref{def:completion}, so 2 holds.  3 is immediate.
\end{proof}

\begin{CorSub} \label{thm:sep_refl}
Let $\TT$ be a $\V$-monad on a $\V$-category $\B$ equipped with a $\V$-proper $\V$-factorization-system $(\E,\M)$.  Then the full sub-$\V$-category of $\B$ consisting of the $\TT$-separated objects is $\V$-reflective in $\B$.
\end{CorSub}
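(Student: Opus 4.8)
The plan is to derive this as the special case of Theorem \bref{thm:refl_orth} obtained by taking $\Sigma$ to be the \emph{empty} class of morphisms. First I would note that $\emptyset \subseteq \Sigma_T$ holds trivially, and that a $\V$-proper $\V$-factorization-system is in particular a $\V$-proper $\V$-prefactorization-system, so the data required in \bref{par:data_enr_orth_subcat_subord_adj} are all present with $\Sigma := \emptyset$. The key observation is that for this choice the object-class of $\B_\emptyset$ is $\emptyset^{\bot_\V}$, which equals $\ob\B$ by vacuity of the orthogonality condition; hence \emph{every} object of $\B$ is $\Sigma$-complete, and so $\B_{(\TT,\emptyset)}$ reduces to precisely the full sub-$\V$-category of $\TT$-separated objects \pbref{def:compl_sep_closed_dense}.

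Next I would identify the two relevant classes of morphisms. Since $\emptyset^{\downarrow_\V} = \mor\B$ (again by vacuity), the $\Sigma$-closed $\M$-embeddings $\SigmaClEmb = \emptyset^{\downarrow_\V} \cap \M$ coincide with all of $\M$, and therefore $\SigmaDense = \M^{\uparrow_\V} = \E$ by the prefactorization axiom. Consequently the factorization hypothesis of Theorem \bref{thm:refl_orth}, namely Assumption \bref{assn:dense_closed_factn_sys}, asserts exactly that every morphism in $\B$ factors as an $\E$-morphism followed by an $\M$-morphism. This is nothing other than condition 2 in the definition of a $\V$-factorization-system, and so holds by the standing hypothesis that $(\E,\M)$ is a factorization system rather than merely a prefactorization system.

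With both hypotheses of Theorem \bref{thm:refl_orth} verified, that theorem then exhibits the morphisms $\rho_B$ as the unit of a $\V$-reflection onto $\B_{(\TT,\emptyset)}$, which we have just identified with the full sub-$\V$-category of $\TT$-separated objects; this completes the argument. I expect essentially no obstacle here: the entire content is the bookkeeping around the empty class, i.e. recording that $\emptyset^{\bot_\V} = \ob\B$ and $\emptyset^{\downarrow_\V} = \mor\B$ so that $\Sigma$-completeness becomes automatic, whereupon the general reflectivity statement collapses to consuming the bare $(\E,\M)$-factorization that is already assumed to exist.
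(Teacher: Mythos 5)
Your proposal is correct and follows exactly the paper's own proof: the paper likewise takes $\Sigma = \emptyset$, observes that $\B_{(\TT,\emptyset)}$ consists precisely of the $\TT$-separated objects, notes that $\ClEmbWrt{\emptyset} = \M$ and hence $\DenseWrt{\emptyset} = \E$ so that Assumption \bref{assn:dense_closed_factn_sys} reduces to the existence of $(\E,\M)$-factorizations, and then applies Theorem \bref{thm:refl_orth}. Your write-up simply makes explicit the vacuity bookkeeping ($\emptyset^{\bot_\V} = \ob\B$, $\emptyset^{\downarrow_\V} = \mor\B$) that the paper leaves implicit.
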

\begin{proof}
Taking $\Sigma = \emptyset$, the objects of $\B_{(\TT,\Sigma)}$ are exactly the $\TT$-separated objects of $\B$, and the hypotheses of \bref{thm:refl_orth} are satisfied since $\ClEmbWrt{\emptyset} = \M$ and hence $\DenseWrt{\emptyset} = \E$.
\end{proof}

\begin{ThmSub} \label{thm:sigma_s_complete_are_sep_and_refl}
Let $\TT$ be a $\V$-monad on a $\V$-category $\B$ equipped with a $\V$-proper $\V$-factorization-system $(\E,\M)$, and suppose that every morphism in $\B$ factors as a $\TT$-dense morphism followed by a $\TT$-closed $\M$-embedding.
\begin{enumerate}
\item The idempotent $\V$-core $\tTT$ of $\TT$ exists.
\item The $\V$-reflective-subcategory of $\B$ determined by $\tTT$ consists of the $\TT$-complete objects.
\item Every $\TT$-complete object of $\B$ is $\TT$-separated.
\item Each component of the unique morphism of $\V$-monads $\iota:\tTT \rightarrow \TT$ is a $\TT$-closed $\M$-embedding.
\item Each component of the unit $\rho:1_\B \rightarrow \tT$ of $\tTT$ is a $\TT$-dense morphism.
\item Every $\V$-adjunction $F \dashv G:\C \rightarrow \B$ inducing $\TT$ factors through the $\V$-reflection $K \dashv J:\B_{\Sigma_T} \hookrightarrow \B$ determined by $\tTT$, in such a way that the induced left $\V$-adjoint $F':\B_{\Sigma_T} \rightarrow \C$ \pbref{def:adj_factors_through_refl} is conservative.
\end{enumerate}
\end{ThmSub}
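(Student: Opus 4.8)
The plan is to set $\Sigma := \Sigma_T$ throughout and observe that the hypotheses are then precisely those of \bref{thm:refl_orth} and of \bref{thm:suppl_results_on_tsepsigmacompl_mnd}, so that almost all of the work has already been done: \bref{thm:refl_orth} exhibits the full sub-$\V$-category $\B_{(\TT,\Sigma_T)}$ of $\TT$-complete $\TT$-separated objects as $\V$-reflective, with reflection units the morphisms $\rho_B:B \rightarrow KB$ of \bref{def:completion}. The single piece of genuinely new content is assertion (3), that every $\TT$-complete object is \emph{automatically} $\TT$-separated; granting this, the class of objects of $\B_{(\TT,\Sigma_T)}$ coincides with $\B_{\Sigma_T}$, and everything else is a matter of transporting the already-established $\Sigma_T$-results along this identification and invoking \bref{thm:charn_idm_approx}. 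I therefore expect (3) to be the main obstacle.

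To prove (3), I would first record that each $\rho_B$ lies in $\Sigma_T$: by \bref{thm:rho_inverted_by_s} the morphism $\rho_B$ is inverted by $F$, and $\Sigma_F = \Sigma_T$ by \bref{thm:all_ladjs_ind_given_monad_inv_same_morphs}. Now let $B$ be $\TT$-complete, so that $B \in \B_{\Sigma_T} = \Sigma_T^{\bot_\V}$; by \bref{thm:kb_sep_compl} the object $KB$ is $\TT$-complete as well, so both $B$ and $KB$ lie in $\Sigma_T^{\bot_\V}$. Since $\rho_B \in \Sigma_T$, this yields $\rho_B \bot_\V B$ and $\rho_B \bot_\V KB$. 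From $\rho_B \bot_\V B$, the isomorphism $\B(\rho_B,B)$ produces a retraction $r:KB \rightarrow B$ with $r \cdot \rho_B = 1_B$; from $\rho_B \bot_\V KB$, the map $\B(\rho_B,KB)$ is an isomorphism and hence monic, and since $(\rho_B \cdot r)\cdot \rho_B = \rho_B = 1_{KB}\cdot \rho_B$ we conclude $\rho_B \cdot r = 1_{KB}$. Thus $\rho_B$ is an isomorphism. As $\eta_B = \iota_B \cdot \rho_B$ with $\iota_B \in \M$ and $\rho_B$ an isomorphism, hence a section and so in $\M$ by \bref{par:vproper_prefactn}, closure of $\M$ under composition \pbref{par:stab_props_prefactn} gives $\eta_B \in \M$, i.e.\ $B$ is $\TT$-separated.

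With (3) in hand, the object classes $\B_{(\TT,\Sigma_T)}$ and $\B_{\Sigma_T}$ agree, so the $\V$-reflectivity from \bref{thm:refl_orth} shows $\B_{\Sigma_T}$ to be a $\V$-reflective-subcategory; this is condition 3 of \bref{thm:charn_idm_approx}, whence the idempotent $\V$-core $\tTT$ exists, proving (1), and the last clause of \bref{thm:charn_idm_approx} identifies its associated $\V$-reflective-subcategory as $\B_{\Sigma_T}$, i.e.\ the $\TT$-complete objects, proving (2). Moreover $\tTT$ and the $\TT$-separated $\Sigma_T$-completion monad $\tTT_{\Sigma_T}$ determine the same $\V$-reflective-subcategory and so coincide under the bijection \bref{thm:refl_idemp_mnd}; transporting parts 2 and 3 of \bref{thm:suppl_results_on_tsepsigmacompl_mnd} across this identification gives (4) and (5), namely that the components of the unique monad morphism $\iota:\tTT \rightarrow \TT$ are the $\TT$-closed $\M$-embeddings $\iota_B$ and that the unit components $\rho_B$ are $\TT$-dense. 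Finally, (6) is condition (iv) of \bref{thm:charn_idm_approx}, which holds because $\tTT$ is the idempotent core: every $\V$-adjunction inducing $\TT$ factors through $K \dashv J:\B_{\Sigma_T} \hookrightarrow \B$ with the induced left $\V$-adjoint $F'$ conservative.
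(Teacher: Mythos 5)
Your proposal is correct, and its skeleton coincides with the paper's: part (3) is isolated as the only genuinely new content, after which (1), (2), (6) follow from \bref{thm:refl_orth} together with \bref{thm:charn_idm_approx}, and (4), (5) from \bref{thm:suppl_results_on_tsepsigmacompl_mnd}, exactly as you say. The difference lies in how (3) is proved. The paper routes through the case $\Sigma = \emptyset$: by \bref{thm:sep_refl} the $\TT$-separated objects form a $\V$-reflective subcategory $\B_{(\TT,\emptyset)}$, whose reflection units $\sigma_B$ lie in $\Sigma_T$ by \bref{thm:rho_inverted_by_s} (applied with $\Sigma = \emptyset$); a $\TT$-complete object is then $\V$-orthogonal to every $\sigma_B$ and so lies in $\B_{(\TT,\emptyset)}$ by \bref{thm:refl_subcats_are_orth_subcats}. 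You instead argue directly on the $(\TT\text{-dense},\TT\text{-closed})$-factorization $\eta_B = \iota_B \cdot \rho_B$ of \bref{def:completion} with $\Sigma = \Sigma_T$: since $\rho_B \in \Sigma_T$ (same key lemma \bref{thm:rho_inverted_by_s}, plus \bref{thm:all_ladjs_ind_given_monad_inv_same_morphs}) and both $B$ and $KB$ lie in $\Sigma_T^{\bot_\V}$ when $B$ is $\TT$-complete (the latter by \bref{thm:kb_sep_compl}), your split-epi/split-mono computation makes $\rho_B$ invertible, whence $\eta_B \in \M$ by properness and closure under composition. Both arguments pivot on \bref{thm:rho_inverted_by_s}; yours avoids the detour through the separated reflection (\bref{thm:sep_refl}) at the cost of re-proving by hand the standard fact---packaged in the paper as \bref{thm:refl_subcats_are_orth_subcats}, via \bref{thm:orth_of_morphs_in_orth_subcat} and the cited result that a self-orthogonal morphism is invertible---that a $\Sigma_T$-morphism between objects of $\Sigma_T^{\bot_\V}$ is an isomorphism. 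A small dividend of your route is that it makes explicit that the completion unit $\rho_B$ is an isomorphism for every $\TT$-complete $B$, i.e.\ $B \cong KB$; a dividend of the paper's route is that it reuses already-established structural lemmas and requires no elementwise computation.
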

\begin{proof}
3. By \bref{thm:sep_refl} we know that the full sub-$\V$-category $\B_{(\TT,\emptyset)}$ of $\B$ consisting of the $\TT$-separated objects is $\V$-reflective in $\B$, and we will denote the components of the unit of the associated $\V$-reflection by $\sigma_B:B \rightarrow LB$ ($B \in \B$).  Hence, $\sigma_B$ is gotten as the morphism $\rho_B$ of \bref{def:completion} in the case that $\Sigma = \emptyset$.  By \bref{thm:rho_inverted_by_s} we know that each such component $\sigma_B$ is inverted by $F$ --- i.e. $\sigma_B \in \Sigma_F = \Sigma_T$.  Hence, given any $\TT$-complete object $B' \in \B_{\Sigma_T}$, we have that $\sigma_B \bot_\V B'$ for every $B \in \B$, so by \bref{thm:refl_subcats_are_orth_subcats}, $B' \in \B_{(\TT,\emptyset)}$.  

By 3 we know that $\B_{\Sigma_T} = \B_{(\TT,\Sigma_T)}$, and by \bref{thm:refl_orth} we deduce that $\B_{(\TT,\Sigma_T)}$ is a $\V$-reflective-subcategory of $\B$.  Hence 1, 2, and 6 follow immediately from \bref{thm:charn_idm_approx}.

4 and 5 follow from \bref{thm:suppl_results_on_tsepsigmacompl_mnd}.
\end{proof}

\begin{CorSub}\label{thm:msubobjcompl_impl_idm_approx_exists}
Let $\TT$ be a $\V$-monad on an $\M$-subobject-complete $\V$-category $\B$, where $(\E,\M)$ is a $\V$-proper \mbox{$\V$-prefactorization-system} on $\B$.  Then the idempotent $\V$-core $\tTT$ of $\TT$ exists.
\end{CorSub}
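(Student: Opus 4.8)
The plan is to deduce this directly from Theorem \bref{thm:sigma_s_complete_are_sep_and_refl}, applied with $\Sigma := \Sigma_T$, so that the ``$\Sigma$-'' notions specialize to the ``$\TT$-'' notions of \bref{def:compl_sep_closed_dense}. Beyond the hypotheses already present here, that theorem requires two further facts: that $(\E,\M)$ be a $\V$-proper $\V$-\emph{factorization}-system rather than merely a prefactorization-system, and that every morphism in $\B$ factor as a $\TT$-dense morphism followed by a $\TT$-closed $\M$-embedding (Assumption \bref{assn:dense_closed_factn_sys}). My task is therefore to extract both of these from $\M$-subobject-completeness.

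First I would upgrade the prefactorization-system. Since $(\E,\M)$ is $\V$-proper, every morphism in $\M$ is a $\V$-monomorphism, so the standing hypothesis of \bref{par:m_subobj_compl} is met; its part 1 then yields, from $\M$-subobject-completeness, that $(\E,\M)$ is a $\V$-factorization-system, and $\V$-properness is of course unaffected. Next I would produce the required factorizations. Taking $\Sigma := \Sigma_T$ and setting $\N := \Sigma_T^{\downarrow_\V} \cap \M$ as in part 2 of \bref{par:m_subobj_compl}, the class $\N$ is precisely that of the $\TT$-closed $\M$-embeddings and $\N^{\uparrow_\V}$ that of the $\TT$-dense morphisms \pbref{def:compl_sep_closed_dense}. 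By part 2 of \bref{par:m_subobj_compl}, $\M$-subobject-completeness makes $(\N^{\uparrow_\V},\N)$ a $\V$-factorization-system; in particular every morphism factors as a $\TT$-dense morphism followed by a $\TT$-closed $\M$-embedding, which is exactly Assumption \bref{assn:dense_closed_factn_sys}. (This is the content already recorded in Example \bref{exa:msubobjcpl_impl_factn_assn}.)

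With both hypotheses verified, I would invoke conclusion 1 of Theorem \bref{thm:sigma_s_complete_are_sep_and_refl} to conclude that the idempotent $\V$-core $\tTT$ of $\TT$ exists. There is essentially no obstacle, as the corollary is a straightforward specialization; the only point needing care is the bookkeeping identification of the abstract $(\N^{\uparrow_\V},\N)$-factorization system at $\Sigma = \Sigma_T$ with the $(\TT\text{-dense},\,\TT\text{-closed }\M\text{-embedding})$-factorization system demanded by the theorem, and this is immediate from the definitions in \bref{def:compl_sep_closed_dense}.
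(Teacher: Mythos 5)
Your proposal is correct and follows exactly the paper's own route: the paper's proof is the one-line citation of \bref{par:m_subobj_compl} and \bref{exa:msubobjcpl_impl_factn_assn} to verify the hypotheses of Theorem \bref{thm:sigma_s_complete_are_sep_and_refl}, which is precisely what you spell out. Your version merely makes explicit the bookkeeping (upgrading $(\E,\M)$ to a $\V$-factorization-system via part 1 of \bref{par:m_subobj_compl}, and identifying $\N = \Sigma_T^{\downarrow_\V} \cap \M$ with the class of $\TT$-closed $\M$-embeddings) that the paper leaves implicit.
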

\begin{proof}
The hypotheses of \bref{thm:sigma_s_complete_are_sep_and_refl} are satisfied (\bref{par:m_subobj_compl}, \bref{exa:msubobjcpl_impl_factn_assn}).
\end{proof}

In view of \bref{thm:sigma_s_complete_are_sep_and_refl}, we shall extend the notation and terminology of \bref{def:compl_sep_closed_dense} as follows:

\begin{DefSub}\label{def:tcompletion}
Let data satisfying the hypotheses of \bref{thm:sigma_s_complete_are_sep_and_refl} be given.
\begin{enumerate}
\item We call $\tTT$ the \textit{$\TT$-completion $\V$-monad}.
\item For each object $B \in \B$, we call $\tT B = KB$ the \textit{$\TT$-completion} of $B$.
\item We denote by $\B_{(\TT)} := \B_{\Sigma_T} = \B_{(\TT,\Sigma_T)}$ the $\V$-reflective-subcategory of $\B$ consisting of the $\TT$-complete objects.
\end{enumerate}
\end{DefSub}

\section{Example: Closure and density in normed spaces}\label{sec:cl_dens_nvs}

We saw in \bref{thm:compl_nvs} that the completion $\V$-monad on the category $\V$ of normed (resp. seminormed) vector spaces is the idempotent $\V$-core of the double-dualization $\V$-monad on $\uV$.  Further, we shall see that the familiar construction of the completion of a normed or seminormed vector space $V$ as the closure of $V$ in $V^{**}$ is an instance of the general procedure given in \bref{thm:sigma_s_complete_are_sep_and_refl} for forming the idempotent $\V$-core $\tTT$ of a $\V$-monad $\TT$.  Indeed, we will show that the notions of $\TT$-closure and $\TT$-density in this example coincide with the familiar notions of closure and density.

Again as in \bref{sec:compl_nvs}, let $\V$ be either $\SNorm_1$ or $\Norm_1$, and let $\TT$ denote the double-dualization $\V$-monad on $\uV$.  Further, let $\E$ denote the class of all surjective morphisms in $\V$, and let $\M$ denote the class of all isometric embeddings in $\V$.

\begin{LemSub}\label{thm:surj_isoemb_vfs}
$(\E,\M)$ is a $\V$-proper $\V$-factorization-system on the $\V$-category $\uV$ of normed (resp. seminormed) vector spaces.
\end{LemSub}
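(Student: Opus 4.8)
The plan is to verify the three requirements in turn: $\V$-properness, existence of $(\E,\M)$-factorizations, and the prefactorization identities $\E^{\downarrow_\V} = \M$ and $\M^{\uparrow_\V} = \E$. Properness \pbref{par:vproper_prefactn} is immediate once one notes that the monomorphisms in $\V$ ($=\Norm_1$ or $\SNorm_1$) are exactly the injective morphisms: a surjective linear map $e$ is an epimorphism, so each $\B(e,A):\B(A_2,A)\to\B(A_1,A)$, $f\mapsto f\cdot e$, is injective and hence a $\V$-monomorphism, whence $e$ is a $\V$-epimorphism \pbref{par:vmono}; dually an isometric embedding is injective, so each $\B(A,m)$ is injective and $m$ is a $\V$-monomorphism. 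For the factorization, given $h:V\to W$ I would take $I := h(V)\subseteq W$ with the (semi)norm restricted from $W$ and factor $h$ as the corestriction $V\twoheadrightarrow I$ followed by the inclusion $I\hookrightarrow W$; the corestriction is surjective and nonexpansive since $\|h(v)\|_I=\|h(v)\|_W\le\|v\|$, the inclusion is an isometric embedding by construction, and $I$ is again normed when $W$ is.

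The heart of the proof is the $\V$-orthogonality $e\downarrow_\V m$ for every surjection $e:A_1\to A_2$ and every isometric embedding $m:B_1\to B_2$, i.e. that the square \eqref{eqn:orth_pb} is a pullback in $\V$. I would compute the $\V$-pullback $P$ of $\B(A_1,m)$ and $\B(e,B_2)$ explicitly \pbref{par:vlimits}: its underlying space is $\{(g,h)\in\uV(A_1,B_1)\times\uV(A_2,B_2)\mid m\cdot g = h\cdot e\}$, carrying the fibre-product (semi)norm $\|(g,h)\| = \max(\|g\|,\|h\|)$, and the canonical comparison is $\Phi:\uV(A_2,B_1)\to P$, $k\mapsto(k\cdot e,\,m\cdot k)$. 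I claim $\Phi$ is an isomorphism in $\V$. It is injective because $m\cdot k = 0$ forces $k=0$ ($m$ being injective). It is surjective because, given $(g,h)\in P$, the square with sides $e,m,g,h$ admits the diagonal $k$ defined by $k(a_2):=g(a_1)$ for any $a_1$ with $e(a_1)=a_2$: this is well-defined since $m$ is injective, and nonexpansive since $\|k(a_2)\|=\|g(a_1)\|=\|m\cdot g(a_1)\|=\|h(a_2)\|\le\|a_2\|$, using that $m$ is isometric. Finally $\Phi$ is an isometry: $\|m\cdot k\|=\|k\|$ because $m$ is isometric, while $\|k\cdot e\|\le\|k\|$ because $e$ is nonexpansive, so $\max(\|k\cdot e\|,\|m\cdot k\|)=\|k\|$. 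A bijective isometry is an isomorphism in $\V$, so the square is a $\V$-pullback. This explicit computation is the step I expect to be the main obstacle, since it is where surjectivity of $e$, injectivity of $m$, and the isometry condition must be combined in just the right way.

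It then remains to upgrade orthogonality and factorization into the identities $\E^{\downarrow_\V}=\M$ and $\M^{\uparrow_\V}=\E$. The previous step gives $\E\subseteq\M^{\uparrow_\V}$ and $\M\subseteq\E^{\downarrow_\V}$. For the reverse inclusions I would use that $\V$-orthogonality implies ordinary orthogonality (applying $\V(I,-)$, which preserves pullbacks and recovers the underlying hom-sets, to the $\V$-pullback square), together with the factorization and the usual diagonal fill-in. Concretely, if $f\in\M^{\uparrow_\V}$, factor $f=m\cdot e$ with $e\in\E$, $m\in\M$; the commuting square with top $e$, left $f$, right $m$, bottom the identity yields a diagonal exhibiting $m$ as a split epimorphism, so $m$ is simultaneously surjective and an isometric embedding, hence an isomorphism, whence $f=m\cdot e$ is a composite of surjections and lies in $\E$. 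Dually, if $g\in\E^{\downarrow_\V}$, factor $g=m\cdot e$ and fill in the square with left $e$, top the identity, exhibiting the first factor $e$ as a split monomorphism; then the surjection $e$ is injective, hence an isomorphism, so $g=m\cdot e$ is a composite of isometric embeddings and lies in $\M$.

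Combining the three paragraphs, $(\E,\M)$ satisfies both axioms of a $\V$-factorization-system and is $\V$-proper, as required.
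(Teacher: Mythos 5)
Your proof is correct, but it takes a genuinely different route from the paper's. The paper leans on the general machinery of \cite{Lu:EnrFactnSys}: properness follows from its result (2.4) that epimorphisms and monomorphisms in $\V$ coincide with $\V$-epis and $\V$-monos in $\uV$, and, since $\uV$ is cotensored, its Proposition 5.7 reduces the entire enriched statement to checking that $(\E,\M)$ is an \emph{ordinary} factorization system (factorizations, $\Set$-level orthogonality, closure under composition with isomorphisms, via its 5.2) together with the single metric fact that $\M$ is closed under cotensors, i.e.\ that $\uV(V,m)$ is injective and isometric whenever $m$ is. You instead verify the enriched orthogonality $e \downarrow_\V m$ head-on, by computing the pullback of the square \eqref{eqn:orth_pb} in $\V$ (the max-norm fibre product) and checking that the comparison $k \mapsto (k\cdot e,\, m\cdot k)$ is a bijective isometry, and you then recover the prefactorization identities $\E^{\downarrow_\V}=\M$ and $\M^{\uparrow_\V}=\E$ by the standard diagonal fill-in argument rather than by citation; all of these steps check out, including the well-definedness and nonexpansiveness of the diagonal $k(a_2):=g(a_1)$, which is exactly where surjectivity of $e$, injectivity of $m$, and the isometry condition are combined. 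What your approach buys is self-containedness and transparency about where each hypothesis enters; what the paper's approach buys is brevity and reuse of a general theorem, isolating the only genuinely metric computation in the cotensor-closure of $\M$ (which is in fact the same computation as the right-hand edge of your pullback analysis, namely that postcomposition with $m$ is isometric). One small terminological slip: in your properness argument, what definition \pbref{par:vmono} requires is that $\uV(e,A)$ be a monomorphism \emph{in} $\V$ (which injectivity gives directly), rather than that it be a ``$\V$-monomorphism''; the two notions agree here by \cite{Lu:EnrFactnSys}, but the phrasing should be adjusted.
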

\begin{proof}
Each surjective morphism (resp. each isometric embedding) is clearly an epimorphism (resp. a monomorphism) in $\V$ and hence, by \cite[2.4]{Lu:EnrFactnSys}, is a $\V$-epi (resp. a $\V$-mono) in $\uV$.  Since $\uV$ is a cotensored $\V$-category, it suffices by \cite[5.7]{Lu:EnrFactnSys} to show that $(\E,\M)$ is an ordinary factorization system on $\V$ and that $\M$ is closed under cotensors in $\uV$.  Clearly every morphism in $\V$ factors as a morphism in $\E$ followed by a morphism in $\M$.  Moreover, it is easy to check that each morphism $e \in \E$ is orthogonal to each morphism $m \in \M$.  Hence, since $\E$ and $\M$ are also closed under composition with isomorphisms, we deduce that $(\E,\M)$ is a factorization system on $\V$ (e.g., by \cite[5.2]{Lu:EnrFactnSys}).  Further $\M$ is closed under cotensors in $\V$, since for any isometric embedding $m:W_1 \rightarrow W_2$ in $\V$, it is readily verified that the induced morphism $\uV(V,m):\uV(V,W_1) \rightarrow \uV(V,W_2)$ (given by composing with $m$) is injective and isometric.
\end{proof}

\begin{ThmSub}\label{thm:dens_cl_nvs}
With respect to the double-dualization $\V$-monad $\TT$ on the category $\V$ of normed (resp. seminormed) vector spaces and the $\V$-factorization-system $(\E,\M)$, the following hold:
\begin{enumerate}
\item A morphism in $\V$ is $\TT$-dense if and only if it is dense.
\item A morphism in $\V$ is a $\TT$-closed $\M$-embedding if and only if it is a closed isometric embedding.
\item Every morphism in $\V$ factors as a $\TT$-dense morphism followed by a $\TT$-closed $\M$-embedding.
\item The $\TT$-closure of a subspace $V \hookrightarrow W$ of a normed (resp. seminormed) vector space $W$ is the usual closure $\overline{V} \hookrightarrow W$ of $V$ in $W$.
\end{enumerate}
\end{ThmSub}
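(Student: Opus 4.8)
The plan is to first pin down the relevant class $\Sigma = \Sigma_T$ concretely: by \bref{thm:compl_nvs}, $\Sigma_T$ is exactly the class of dense isometric morphisms, while $\M$ is the class of isometric embeddings by \bref{thm:surj_isoemb_vfs}. The technical core on which everything else rests is the following enriched orthogonality computation, which I would isolate as a preliminary claim: \emph{if $d\colon V\to X$ is a morphism in $\V$ with dense image and $m\colon B_1\rightarrowtail B_2$ is a closed isometric embedding, then $d\downarrow_\V m$.} To prove it I would verify directly that the comparison morphism from $\uV(X,B_1)$ into the $\V$-pullback of $\uV(X,B_2)\to\uV(V,B_2)\leftarrow\uV(V,B_1)$ is an isometric isomorphism. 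An element of that pullback is a pair $(g,p)$ with $gd=mp$; since $gd$ has image in the \emph{closed} subspace $m(B_1)$ and $d(V)$ is dense while $g$ is continuous, one gets $g(X)\subseteq m(B_1)$, so $g$ factors uniquely as $g=mk$ with $k$ nonexpansive (using that $m$ is isometric), and $kd=p$ follows since $m$ is mono. The assignment $k\mapsto(mk,kd)$ is thus a bijection onto the pullback, and the bookkeeping $\|mk\|=\|k\|$, $\|kd\|\le\|k\|$ shows it is isometric. \textbf{This norm computation, together with the identification of $\V$-pullbacks of hom-objects as sup-norm subspaces of products, is the main obstacle}: one must check the comparison is an isometric isomorphism, not merely a set-level bijection, so that the square is genuinely a $\V$-pullback rather than only an ordinary one.

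Granting this claim I would prove (2) next. For ``if'', a closed isometric embedding $m$ is $\V$-orthogonal to every $h\in\Sigma$ (each such $h$ is in particular dense) by the claim, so $m\in\Sigma^{\downarrow_\V}\cap\M=\SigmaClEmb$. For ``only if'' I would argue contrapositively: if an $\M$-embedding $m\colon B_1\rightarrowtail B_2$ has image that is not closed, then the inclusion $h$ of $m(B_1)$ into its closure is a dense isometric morphism, hence lies in $\Sigma$; were $m$ $\Sigma$-closed, the resulting ordinary orthogonality $h\downarrow m$ would produce a nonexpansive retraction $k$ of $h$, and since $hk$ is a continuous idempotent restricting to the identity on the dense subspace $h(B_1)$ it would equal the identity, forcing $h$ to be surjective and so its image to be closed — a contradiction. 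Thus $\SigmaClEmb$ is exactly the class of closed isometric embeddings.

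For (3) I would factor an arbitrary $f\colon V\to W$ as $V\xrightarrow{d}\overline{f(V)}\xrightarrow{m}W$, where $m$ is the inclusion of the topological closure of the image and $d$ is the corestriction of $f$. Here $m$ is a closed isometric embedding, hence a $\Sigma$-closed $\M$-embedding by (2), and $d$ has dense image; by the preliminary claim $d$ is $\V$-orthogonal to every closed isometric embedding, so (using the ``only if'' half of (2), namely $\SigmaClEmb$ is contained in the closed isometric embeddings) $d\in\SigmaClEmb^{\uparrow_\V}=\SigmaDense$. This yields the required factorization. Part (1) then follows: ``dense $\Rightarrow$ $\TT$-dense'' is exactly the argument just given for $d$, while for ``$\TT$-dense $\Rightarrow$ dense'' I would take a $\Sigma$-dense $f$ and apply ordinary orthogonality against the closed isometric embedding $\overline{f(V)}\hookrightarrow W$ (which is $\Sigma$-closed by (2)); the resulting diagonal is a section of this inclusion, forcing it to be surjective and hence $f$ to be dense.

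Finally, (4) is immediate from (3) and the definition of $\TT$-closure in \bref{par:sigma_closure}: the $\TT$-closure of a subspace $V\hookrightarrow W$ is the $\SigmaClEmb$-factor of the $(\SigmaDense,\SigmaClEmb)$-factorization of the inclusion, and by (3) that factorization is $V\to\overline{V}\hookrightarrow W$; uniqueness of factorizations in a factorization system then identifies the $\TT$-closure with the usual closure $\overline{V}\hookrightarrow W$. Throughout I would treat the seminormed and normed cases uniformly, noting only that ``dense'' and ``closed'' refer to the seminorm topology and that $\M$-morphisms are injective by definition, which is all the retraction and factoring arguments require.
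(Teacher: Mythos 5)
Your proposal takes a genuinely different route from the paper's, and most of it checks out, but one step fails in the seminormed case --- which the theorem explicitly covers. The problem is in your ``only if'' half of (2): you conclude that the continuous idempotent $hk$, which restricts to the identity on the dense subspace $h(B_1)$, must equal the identity. That inference requires the ambient space to be Hausdorff, i.e.\ normed; two continuous maps agreeing on a dense subspace need not coincide when the codomain carries a mere seminorm. Concretely, in $\RR^2$ with seminorm $\|(x,y)\| = |x|$, the axis $\{(x,0)\}$ is dense, and $(x,y) \mapsto (x,0)$ is a nonexpansive idempotent fixing it that is not the identity. Since your proofs of (1), (3), and (4) all lean on the ``only if'' half of (2), the seminormed half of the whole theorem is left open, contrary to your closing claim that both cases go through uniformly. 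The repair is immediate from data you already have: the orthogonality diagonal $k$ satisfies not only the retraction equation $kh = \id$ but also the other commuting triangle $mk = v$, where $v:\overline{m(B_1)} \hookrightarrow B_2$ is the inclusion forming the bottom of the square; hence every $x \in \overline{m(B_1)}$ equals $m(k(x))$ and so already lies in $m(B_1)$, forcing $m(B_1)$ to be closed with no appeal to density or Hausdorffness. (Your other density arguments are safe: the preliminary claim uses only $g(\overline{A}) \subseteq \overline{g(A)}$, which needs no separation hypothesis, and your section argument in (1) uses the lower triangle $ck=\id_W$ directly.)

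Apart from this, the argument is sound, and it is a genuinely different proof from the paper's. The paper never verifies an enriched orthogonality by hand: it observes that topological closure is a weakly hereditary idempotent closure operator, hence yields an ordinary factorization system $(\Dense,\ClEmb)$ \pbref{par:factn_sys_from_cl_op}; it reduces enriched orthogonality to ordinary orthogonality formally, via tensor-closedness of $\Sigma_T$ and \cite[5.4]{Lu:EnrFactnSys}; it proves (2) by the purely formal sandwich $\DenseEmb \subseteq \Sigma_T \subseteq \Dense$ together with \bref{thm:clemb_via_densemb}; and it obtains (1), (3), and (4) at one stroke by upgrading $(\Dense,\ClEmb)$ to a $\V$-factorization-system using cotensor-closedness and \cite[5.7]{Lu:EnrFactnSys}. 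Your direct computation of the hom-square pullbacks (the max-norm description of pullbacks in $\V$ and the isometry bookkeeping $\|mk\|=\|k\|$, $\|kd\|\le\|k\|$) is correct and self-contained; what it buys is independence from the cited general results of \cite{Lu:EnrFactnSys}, at the cost of doing the analysis explicitly --- and, as above, that analysis must be done carefully enough to survive the non-Hausdorff seminormed setting.
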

\begin{proof}
One sees immediately that the familiar process of taking the closure of a subspace defines a weakly hereditary idempotent closure operator $\overline{(-)}$ on $\M$ in $\V$ \pbref{par:idm_cl_op}.  By \bref{par:factn_sys_from_cl_op}, $\overline{(-)}$ determines an associated factorization system $(\Dense,\ClEmb)$, consisting of the dense morphisms and closed $\M$-embeddings, respectively.

Recall from \bref{thm:compl_nvs} that the class $\Sigma_T$ of all morphisms inverted by $T = (-)^{**}$ consists of exactly the dense isometric morphisms.  By definition, an $\M$-embedding $m$ is \mbox{$\TT$-closed} iff $m \in \Sigma_T^{\downarrow_\V}$.  But since $(\Sigma_T,\Sigma_T^{\bot_\V})$ is a $\V$-orthogonal-pair in $\uV$ \pbref{thm:morphs_inv_by_t_vorth_pair}, we deduce by \bref{thm:clos_props_orth_pairs} that $\Sigma_T$ is closed under tensors in $\uV$, so by \cite[5.4]{Lu:EnrFactnSys}, $\Sigma_T^{\downarrow_\V} = \Sigma_T^{\downarrow}$, whence $\ClEmbWrt{\TT} = \Sigma_T^\downarrow \cap \M$.

To prove 2, first observe that since $\Sigma_T \subseteq \Dense$, $\ClEmb = \Dense^\downarrow \subseteq \Sigma_T^\downarrow$ and hence $\ClEmb \subseteq \ClEmbWrt{\TT}$.  Also, by \bref{thm:clemb_via_densemb} we know that $\ClEmb = \DenseEmb^\downarrow \cap \M$, where $\DenseEmb$ is the set of all dense isometric embeddings, so that since $\DenseEmb \subseteq \Sigma_T$, $\Sigma_T^\downarrow \subseteq \DenseEmb^\downarrow$ and hence $\ClEmbWrt{\TT} \subseteq \ClEmb$.

Since both $\Sigma_T^{\downarrow_\V}$ and $\M$ are closed under cotensors in $\uV$ (by \bref{par:stab_props_prefactn} and \bref{thm:morphs_inv_by_t_vorth_pair}), their intersection $\ClEmbWrt{\TT} = \ClEmb$ is closed under cotensors in $\uV$, so the factorization system $(\Dense,\ClEmb)$ is a $\V$-factorization-system, by \cite[5.7]{Lu:EnrFactnSys}.  Hence 1 and 3 follow, since $\Dense = \ClEmb^{\uparrow_\V} = (\ClEmbWrt{\TT})^{\uparrow_\V} = \DenseWrt{\TT}$.  Further, 4 follows as well, since the $\TT$-closure
operator is by definition the closure operator determined by $(\DenseWrt{\TT},\ClEmbWrt{\TT}) = (\Dense,\ClEmb)$ \pbref{par:cl_op_assoc_to_factn_sys}, which coincides with $\overline{(-)}$.
\end{proof}

\begin{RemSub}
By \bref{thm:surj_isoemb_vfs} and \bref{thm:dens_cl_nvs} 3, the data $\TT$, $(\E,\M)$ of \bref{thm:dens_cl_nvs} satisfy the hypotheses of \bref{thm:sigma_s_complete_are_sep_and_refl}, and the latter theorem entails that the idempotent $\V$-core $\tTT$ of $\TT$ can be formed as follows:  For each object $V \in \V$, $\iota_V:\tT V \hookrightarrow TV$ is the $\TT$-closure of the image of $\eta_V:V \rightarrow TV$.  But by \bref{thm:dens_cl_nvs}, this is simply the usual closure of the image of the canonical map $V \rightarrow V^{**}$; i.e., $\tT V$ is the usual completion of $V$.
\end{RemSub}

\section{Example: Sheafification, closure, and density}\label{sec:shfn_cl_dens}

Let $j$ be a Lawvere-Tierney topology on an (elementary) topos $\X$, let $\Omega_j$ be the associated retract of the subobject classifier $\Omega$, let $\TT$ be the double-dualization $\X$-monad on $\uX$ determined by $\Omega_j$ \pbref{exa:dbl_dln}, and let $(\E,\M) = (\Epi\X,\Mono\X)$.  We now show by means of \bref{thm:sigma_s_complete_are_sep_and_refl} that the idempotent $\X$-core of $\TT$ exists and is given by \textit{$j$-sheafification}, and that moreover, the notions of $\TT$-density, $\TT$-closure, $\TT$-separated object, and $\TT$-complete object coincide with the familiar notions of $j$-density, $j$-closure, $j$-separated object, and $j$-sheaf.

\begin{ParSub}\label{par:lt_top_defs}
The \textit{universal closure operator} $\overline{(-)}$ determined by $j$ is, in particular, a weakly hereditary idempotent closure operator on $\M$ in $\X$; indeed, see \cite[A4.3.2, A4.3.3(ii)]{Joh:Ele}.  This closure operator determines an associated factorization system $(\DenseWrt{j},\ClEmbWrt{j})$ \pbref{par:factn_sys_from_cl_op}.  The elements of $\DenseWrt{j}$ are called \textit{$j$-dense morphisms} and those of $\ClEmbWrt{j}$ \textit{\mbox{$j$-closed} monomorphisms}.  An object $Y \in \X$ is said to be a \textit{$j$-separated} (resp., a \textit{$j$-sheaf}) if for every $j$-dense monomorphism $d:D \rightarrowtail X$, the mapping $\X(d,Y):\X(X,Y) \rightarrow \X(D,Y)$ is a injective (resp. bijective).  Hence, letting $\DenseEmbWrt{j}$ be the class of all $j$-dense monomorphisms, the class of all $j$-sheaves is $\Shv(\X,j) = (\DenseEmbWrt{j})^{\bot}$.  Also, by \bref{thm:clemb_via_densemb}, $\ClEmbWrt{j} = (\DenseEmbWrt{j})^\downarrow \cap \M$.
\end{ParSub}

\begin{LemSub}\label{thm:charn_jsep_jshv}
An object $X \in \X$ is $j$-separated (resp. a $j$-sheaf) if and only if there exists a monomorphism (resp. a $j$-closed mono) $X \rightarrowtail \uX(Y,\Omega_j)$ for some $Y \in \X$.
\end{LemSub}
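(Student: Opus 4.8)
The plan is to prove this as a purely topos-theoretic statement, making no use of the $\TT$-machinery; it is precisely the comparison of this lemma with \bref{thm:sepcompl_iff_exists_clemb} --- whose objects $GC$ are exactly the internal homs $\uX(Y,\Omega_j)$ --- that will later allow \bref{thm:lt_top} to identify the $j$-notions with the $\TT$-notions. First I would assemble the standard facts about $j$-sheaves (see \cite[A4.4]{Joh:Ele}): the object $\Omega_j$ is a $j$-sheaf; the $j$-sheaves form an exponential ideal, so each $\uX(Y,\Omega_j) = \Omega_j^Y$ is again a $j$-sheaf; every subobject of a $j$-separated object is $j$-separated; and every $j$-closed subobject of a $j$-sheaf is a $j$-sheaf. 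I also record that a $j$-sheaf is in particular $j$-separated, which is immediate from the definitions in \bref{par:lt_top_defs}.

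Given these, the two backward implications are immediate. If $m:X \rightarrowtail \uX(Y,\Omega_j)$ is any monomorphism, then $X$ is a subobject of the $j$-sheaf $\uX(Y,\Omega_j)$, which is in particular $j$-separated, so $X$ is $j$-separated. If moreover $m$ is $j$-closed, then $X$ is a $j$-closed subobject of a $j$-sheaf and hence is itself a $j$-sheaf.

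For the forward implications I would use the canonical \emph{$j$-singleton} $\tau_X:X \to \uX(X,\Omega_j)$, taking $Y := X$; here $\tau_X$ is the exponential transpose of the map $X \times X \to \Omega_j \hookrightarrow \Omega$ classifying the $j$-closure $\overline{\Delta}$ of the diagonal $\Delta:X \rightarrowtail X \times X$ (equivalently, the usual singleton $X \rightarrowtail \uX(X,\Omega)$ postcomposed with the map induced by the retraction $\Omega \to \Omega_j$). The key point is that the kernel pair of $\tau_X$ is exactly $\overline{\Delta}$, so that $\tau_X$ is monic iff $\overline{\Delta} = \Delta$, i.e. iff the diagonal is $j$-closed, i.e. iff $X$ is $j$-separated; this settles the $j$-separated case. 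For the $j$-sheaf case, if $X$ is a $j$-sheaf then it is $j$-separated, so $\tau_X$ is a monomorphism into the $j$-sheaf $\uX(X,\Omega_j)$, and it remains only to see that such a monomorphism is automatically $j$-closed. I would check this directly: writing $\overline{m}:\overline{X} \rightarrowtail \uX(X,\Omega_j)$ for the $j$-closure of $\tau_X$ and $d:X \rightarrowtail \overline{X}$ for the resulting $j$-dense monomorphism, the object $\overline{X}$ is a $j$-closed subobject of a $j$-sheaf and hence a $j$-sheaf; the sheaf property of $X$ then extends $1_X$ along the $j$-dense mono $d$ to a retraction $r$ with $rd = 1_X$, whereupon separatedness of $\overline{X}$ forces $dr = 1_{\overline{X}}$, so $d$ is invertible and $\tau_X$ is $j$-closed.

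The main obstacle is the forward direction, and within it the identity between the kernel pair of $\tau_X$ and the $j$-closure $\overline{\Delta}$ of the diagonal; the backward implications and the mono-between-sheaves argument are then routine. I would obtain this kernel-pair computation either from the fact that postcomposing a characteristic map with $j$ classifies the $j$-closure of the subobject it classifies (so that $\overline{\Delta}$, being the kernel pair of the $j$-separated reflection $X \to sX$, is an equivalence relation coinciding with that kernel pair), or simply cite the standard construction of the $j$-separated reflection as the image of $\tau_X$ in \cite[A4.4]{Joh:Ele}, from which the equivalence between $\tau_X$ being monic and $X$ being $j$-separated follows at once.
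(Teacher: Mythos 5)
Your proposal is correct and is essentially the paper's own proof: the paper likewise takes $Y := X$, obtains the monomorphism $X \rightarrowtail \uX(X,\Omega_j)$ from the $j$-singleton map (citing \cite[V.3.4]{MacMoe}), gets its $j$-closedness when $X$ is a sheaf from the facts that $\uX(X,\Omega_j)$ is a $j$-sheaf and that a mono between sheaves is $j$-closed, and derives the converse implications from heredity of separatedness and the fact that $j$-closed subobjects of sheaves are sheaves (citing \cite[3.24, 3.26, 3.27]{Joh:TopTh}). The only difference is that where the paper cites these textbook results, you prove them directly --- via the kernel-pair computation identifying the kernel pair of $\tau_X$ with $\overline{\Delta}$, and the dense-retract argument showing a mono between sheaves is closed --- both of which are sound.
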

\begin{proof}
If an object $X \in \X$ is $j$-separated, then by \cite[V.3.4]{MacMoe} there is a monomorphism $X \rightarrowtail \uX(X,\Omega_j)$.  If $X$ is moreover a $j$-sheaf, then since $\uX(X,\Omega_j)$ is a $j$-sheaf by \cite[3.27, 3.24]{Joh:TopTh} we deduce by \cite[3.26]{Joh:TopTh} that the above monomorphism $X \rightarrowtail \uX(X,\Omega_j)$ is $j$-closed.  Conversely, if there is a monomorphism $m:X \rightarrowtail \uX(Y,\Omega_j)$ for some $Y \in \Y$, then by \cite[3.27, 3.24, 3.26]{Joh:TopTh}, $X$ is $j$-separated.  If the given monomorphism $m$ is $j$-closed, then we deduce by \cite[3.27, 3.24, 3.26]{Joh:TopTh} that $X$ is a $j$-sheaf.
\end{proof}

\begin{LemSub}\label{thm:jdense_mono_closed_under_tensors}
The class $\DenseEmbWrt{j}$ of all $j$-dense monomorphisms is closed under tensors in $\uX$.
\end{LemSub}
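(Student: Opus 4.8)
The plan is to reduce the assertion to the well-known pullback-stability of $j$-dense monomorphisms. First I would identify the tensor: since the enriching category here is the topos $\X$ itself, taken with its cartesian monoidal structure, the tensor of an object $X \in \uX$ by an object $V \in \X$ is just the product $V \times X$. Hence for a $j$-dense monomorphism $d : D \rightarrowtail X$ and any $V \in \X$, the induced morphism $V \otimes d$ is $\id_V \times d : V \times D \rightarrow V \times X$, and I must show this is again a $j$-dense monomorphism, i.e. lies in $\DenseEmbWrt{j}$.

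The crux is the observation that $\id_V \times d$ is exactly the pullback of $d$ along the product projection $\pi : V \times X \to X$: the pullback of $d$ along $\pi$ has apex $V \times D$, with comparison map $(v,y) \mapsto (v,d(y))$. In particular $V \otimes d$ is a monomorphism, being a pullback of the monomorphism $d$, so it lies in $\M = \Mono\X$. It therefore remains only to check that $V \otimes d$ is $j$-dense.

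For this I would invoke the pullback-stability of the universal closure operator $\overline{(-)}$ determined by $j$ \pbref{par:lt_top_defs}: being \emph{universal}, this closure operator satisfies $\overline{f^{-1}(n)} = f^{-1}(\overline{n})$ for every $f : A \to B$ and every $n \in \Sub_\M(B)$ (\cite[A4.3.2]{Joh:Ele}), strengthening the inequality recorded in \bref{par:idm_cl_op} to an equality. Applying this with $f = \pi$ and $n = d$, and using that $d$ is $j$-dense (so $\overline{d} \cong 1_X$), I get $\overline{V \otimes d} = \overline{\pi^{-1}(d)} = \pi^{-1}(\overline{d}) \cong \pi^{-1}(1_X) \cong 1_{V \times X}$, which says precisely that $V \otimes d$ is $j$-dense. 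The only point requiring care is this upgrade from inequality to equality, which is a genuine feature of the universal closure operator (closure being computed by post-composing classifying maps with $j$, an operation that commutes with reindexing along $\pi$) and not of closure operators in general; the remaining identifications are formal.
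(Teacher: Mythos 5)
Your proof is correct and is essentially the paper's own argument: both identify $V \otimes d$ as the pullback of $d$ along the projection $V \times X \to X$ and then use the fact that the universal closure operator commutes with pullback (Johnstone A4.3.2) to transfer $\overline{d} \cong 1_X$ to $\overline{V \otimes d} \cong 1_{V \times X}$. Your explicit verification that $V \otimes d$ is a monomorphism (as a pullback of one) is a small point the paper leaves implicit, but otherwise the two proofs coincide.
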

\begin{proof}
Let $d:D \rightarrow X$ be a $j$-dense monomorphism, and let $Y \in \X$.  Then $Y \times d:Y \times D \rightarrow Y \times X$ is a pullback of $d$ (along the projection $Y \times X \rightarrow X$), and $\overline{(-)}$ commutes (up to isomorphism) with pullback (e.g., by \cite[4.3.2]{Joh:Ele}), so the object $\overline{Y \times d}:\overline{Y \times D} \rightarrow Y \times X$ of the category $\Sub_\M(Y \times X)$ is isomorphic to the pullback $Y \times \overline{d}:Y \times \overline{D} \rightarrow Y \times X$ of $\overline{d}:\overline{D} \rightarrow X$.  But $\overline{d} \cong 1_X$ in $\Sub_\M(X)$ and hence $\overline{Y \times d} \cong Y \times \overline{d} \cong Y \times 1_X = 1_{Y \times X}$ in $\Sub_\M(Y \times X)$, so $Y \times d$ is a $j$-dense monomorphism.
\end{proof}

\begin{CorSub}\label{thm:charns_shvs_and_jclemb_via_enr_orth}
$\Shv(\X,j) = (\DenseEmbWrt{j})^{\bot_\X}$, and $\ClEmbWrt{j} = (\DenseEmbWrt{j})^{\downarrow_\X} \cap \M$.
\end{CorSub}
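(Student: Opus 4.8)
The plan is to reduce both equalities to the ordinary-orthogonality statements already recorded in \bref{par:lt_top_defs}, namely $\Shv(\X,j) = (\DenseEmbWrt{j})^{\bot}$ and $\ClEmbWrt{j} = (\DenseEmbWrt{j})^{\downarrow} \cap \M$, and then to upgrade ordinary orthogonality to $\X$-enriched orthogonality. The engine for this upgrade is the observation that $\uX$ is tensored (its tensors being the cartesian products $Y \times (-)$, exactly as used in \bref{thm:jdense_mono_closed_under_tensors}, since a topos is cartesian closed) together with the fact, established in \bref{thm:jdense_mono_closed_under_tensors}, that the class $\DenseEmbWrt{j}$ of all $j$-dense monomorphisms is closed under tensors in $\uX$.

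For the first equality, I would invoke \bref{thm:case_where_ord_orth_impl_enr}(1), applied with $\B = \uX$, $\V = \X$, and $\Sigma = \DenseEmbWrt{j}$: since $\uX$ is tensored and $\DenseEmbWrt{j}$ is closed under tensors, we obtain $(\DenseEmbWrt{j})^{\bot_\X} = (\DenseEmbWrt{j})^{\bot}$. Combining this with the identity $\Shv(\X,j) = (\DenseEmbWrt{j})^{\bot}$ of \bref{par:lt_top_defs} yields $\Shv(\X,j) = (\DenseEmbWrt{j})^{\bot_\X}$.

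For the second equality, the same two hypotheses (tensoredness of $\uX$ and closure of $\DenseEmbWrt{j}$ under tensors) let me apply \cite[5.4]{Lu:EnrFactnSys} --- the morphism-level comparison already used in the proof of \bref{thm:dens_cl_nvs} --- to conclude that $(\DenseEmbWrt{j})^{\downarrow_\X} = (\DenseEmbWrt{j})^{\downarrow}$. Intersecting with $\M$ and using $\ClEmbWrt{j} = (\DenseEmbWrt{j})^{\downarrow} \cap \M$ from \bref{par:lt_top_defs} then gives $\ClEmbWrt{j} = (\DenseEmbWrt{j})^{\downarrow_\X} \cap \M$, as required.

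There is no genuinely hard step here; the entire content is the bookkeeping of which comparison lemma converts ordinary orthogonality into enriched orthogonality in each case. The one hypothesis that must be verified --- closure of the relevant class under tensors --- is precisely what \bref{thm:jdense_mono_closed_under_tensors} supplies, while the tensoredness of $\uX$ is automatic from cartesian closedness of the topos $\X$.
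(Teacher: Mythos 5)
Your proposal is correct and follows exactly the paper's own argument: both reduce the claim to the ordinary-orthogonality identities of \bref{par:lt_top_defs} and then upgrade to $\X$-enriched orthogonality via the tensor-closure of $\DenseEmbWrt{j}$ from \bref{thm:jdense_mono_closed_under_tensors}, invoking \bref{thm:case_where_ord_orth_impl_enr} for the object-level equality and \cite[5.4]{Lu:EnrFactnSys} for the morphism-level one. No gaps; your write-up merely spells out the bookkeeping that the paper's one-line proof leaves implicit.
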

\begin{proof}
It was remarked in \bref{par:lt_top_defs} that
$$\Shv(\X,j) = (\DenseEmbWrt{j})^{\bot},\;\;\;\;\ClEmbWrt{j} = (\DenseEmbWrt{j})^{\downarrow} \cap \M,$$
so in view of \bref{thm:jdense_mono_closed_under_tensors} the needed equations follow from \bref{thm:case_where_ord_orth_impl_enr} and \cite[5.4]{Lu:EnrFactnSys}.
\end{proof}

\begin{LemSub}\label{thm:jdense_jclemb_enrfs}
$(\DenseWrt{j},\ClEmbWrt{j})$ is an $\X$-factorization-system on $\uX$.
\end{LemSub}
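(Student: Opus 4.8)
The plan is to reproduce, \emph{mutatis mutandis}, the argument already used for normed spaces in \bref{thm:dens_cl_nvs}: I would reduce the $\X$-enrichment of the factorization system to a single cotensor-closedness check by means of \cite[5.7]{Lu:EnrFactnSys}, and then dispatch that check using the enriched-orthogonality description of $\ClEmbWrt{j}$ secured in \bref{thm:charns_shvs_and_jclemb_via_enr_orth}. To begin, recall that $\uX$ is cotensored, its cotensors being the internal homs $\uX(V,-)$ of the cartesian closed category $\X$, and that $(\DenseWrt{j},\ClEmbWrt{j})$ is already an \emph{ordinary} factorization system on the underlying category of $\uX$ by \bref{par:lt_top_defs}. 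Hence by \cite[5.7]{Lu:EnrFactnSys} it suffices to show that the class $\ClEmbWrt{j}$ of $j$-closed monomorphisms is closed under cotensors in $\uX$.

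To verify this, I would invoke the identity $\ClEmbWrt{j} = (\DenseEmbWrt{j})^{\downarrow_\X} \cap \M$ furnished by \bref{thm:charns_shvs_and_jclemb_via_enr_orth} and argue that each of the two classes on the right is closed under cotensors. The class $(\DenseEmbWrt{j})^{\downarrow_\X}$ is the right-hand class of the $\X$-prefactorization-system $\bigl((\DenseEmbWrt{j})^{\downarrow_\X\uparrow_\X},(\DenseEmbWrt{j})^{\downarrow_\X}\bigr)$ and so is closed under cotensors by \bref{par:stab_props_prefactn}; the class $\M = \Mono\X$ is closed under cotensors because the cotensor functor $\uX(V,-)$ is a right adjoint (to $V \times -$) and therefore preserves monomorphisms (equivalently, by \bref{par:stab_props_prefactn}, as $\M$ is the right-hand class of $(\E,\M)$). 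Since the intersection of two classes each closed under cotensors is again closed under cotensors, $\ClEmbWrt{j}$ enjoys this property, and the reduction is complete.

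I do not expect a genuine obstacle at this point, since all of the topos-specific work has been pushed into the preceding two lemmas: \bref{thm:jdense_mono_closed_under_tensors} records that $j$-dense monomorphisms are stable under product with a fixed object, and \bref{thm:charns_shvs_and_jclemb_via_enr_orth} then upgrades ordinary orthogonality against them to $\X$-enriched orthogonality, which is exactly what makes the orthogonal description of $\ClEmbWrt{j}$ available in the enriched setting. The only points demanding care are bookkeeping ones, namely confirming that the relevant cotensors are computed as internal homs and that $(\DenseEmbWrt{j})^{\downarrow_\X}$ legitimately arises as the right-hand class of a prefactorization system so that \bref{par:stab_props_prefactn} may be applied to it.
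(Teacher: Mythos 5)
Your proposal is correct and takes essentially the same route as the paper's proof: reduce via \cite[5.7]{Lu:EnrFactnSys} to showing $\ClEmbWrt{j}$ is closed under cotensors, then use $\ClEmbWrt{j} = (\DenseEmbWrt{j})^{\downarrow_\X} \cap \M$ from \bref{thm:charns_shvs_and_jclemb_via_enr_orth} together with \bref{par:stab_props_prefactn} for the first class and a separate argument for $\M$ (the paper cites \cite[2.4, 2.11]{Lu:EnrFactnSys} to identify $\M = \Mono_\X\uX$ and conclude cotensor-closure, while your right-adjoint argument is an equally valid and more elementary substitute). One caution: drop your parenthetical ``equivalently, by \bref{par:stab_props_prefactn}, as $\M$ is the right-hand class of $(\E,\M)$'' --- that would require $(\Epi\X,\Mono\X)$ to already be known to be an $\X$-enriched prefactorization system, which at this point is not established (it essentially follows from the very cotensor-closure you are proving), so only the right-adjoint justification should stand.
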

\begin{proof}
By \cite[2.4]{Lu:EnrFactnSys}, monomorphisms in $\X$ are the same as $\X$-enriched monomorphisms in $\uX$, so $\M = \Mono_\X\uX$ and hence by \cite[2.11]{Lu:EnrFactnSys}, $\M$ is closed under cotensors in $\uX$.  Since $(\DenseEmbWrt{j}^{\downarrow_\X\uparrow_\X},\DenseEmbWrt{j}^{\downarrow_\X})$ is an $\X$-prefactorization-system on $\uX$, $\DenseEmbWrt{j}^{\downarrow_\X}$ is also closed under cotensors in $\uX$ by \bref{par:stab_props_prefactn}.  Hence, since $\ClEmbWrt{j}$ is the intersection of the classes $(\DenseEmbWrt{j})^{\downarrow_\X}$ and $\M$ by \bref{thm:charns_shvs_and_jclemb_via_enr_orth}, $\ClEmbWrt{j}$ is closed under cotensors in $\uX$, so by \cite[5.7]{Lu:EnrFactnSys}, the factorization system $(\DenseWrt{j},\ClEmbWrt{j})$ is an $\X$-factorization-system on $\uX$.
\end{proof}

\begin{LemSub}\label{thm:jdense_emb_containedin_sigmat}
$\DenseEmbWrt{j} \subseteq \Sigma_T$, where $\Sigma_T$ is the class of all morphisms inverted by the endofunctor $T$.
\end{LemSub}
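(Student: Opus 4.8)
The plan is to reduce the statement to the single fact that $\Omega_j$ is a $j$-sheaf, via the enriched characterization of sheaves already established in \bref{thm:charns_shvs_and_jclemb_via_enr_orth}. First I would recall that $\Omega_j$ is itself a $j$-sheaf; this is standard (it is the classifier of $j$-closed subobjects), and it also follows from the facts about $\uX(X,\Omega_j)$ cited in \bref{thm:charn_jsep_jshv} specialized to $X = 1$. By \bref{thm:charns_shvs_and_jclemb_via_enr_orth} we have $\Shv(\X,j) = (\DenseEmbWrt{j})^{\bot_\X}$, so membership of $\Omega_j$ in $\Shv(\X,j)$ at once gives $\Omega_j \in (\DenseEmbWrt{j})^{\bot_\X}$. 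Unwinding \bref{def:orth_subcat_sigma}, this says that every $j$-dense monomorphism $d:D \rightarrowtail X$ satisfies $d \bot_\X \Omega_j$, i.e. that $\uX(d,\Omega_j):\uX(X,\Omega_j) \to \uX(D,\Omega_j)$ is an isomorphism in $\X$.

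Next I would recast this as invertibility under $T$. The double-dualization $\X$-monad $\TT$ is induced by the dualization $\X$-adjunction $(-)^* \dashv (-)^*$ \pbref{exa:dbl_dln}, whose left adjoint $(-)^*$ carries $d$ to $d^* = \uX(d,\Omega_j)$. By \bref{thm:all_ladjs_ind_given_monad_inv_same_morphs}, a left adjoint inducing $\TT$ inverts exactly the morphisms of $\Sigma_T$; hence $\Sigma_T = \Sigma_{(-)^*} = \{f \mid f^* \text{ is iso}\}$. The previous paragraph shows $d^*$ is an isomorphism for every $d \in \DenseEmbWrt{j}$, so $\DenseEmbWrt{j} \subseteq \Sigma_{(-)^*} = \Sigma_T$, which is the desired inclusion.

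The only genuine input is that $\Omega_j$ is a $j$-sheaf; everything else is a bookkeeping translation between the enriched-orthogonality language of \bref{def:orth_subcat_sigma} and the invertibility-under-the-left-adjoint language of \bref{thm:all_ladjs_ind_given_monad_inv_same_morphs}. I expect no real obstacle here: the point to state carefully is that $\bot_\X$-orthogonality to $\Omega_j$ is \emph{literally} the assertion that the internal-hom map $d^*$ is an isomorphism, so that $\Sigma_T$-membership is immediate once \bref{thm:charns_shvs_and_jclemb_via_enr_orth} has supplied the enriched (rather than merely ordinary) orthogonality.
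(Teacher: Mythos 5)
Your proposal is correct and follows essentially the same route as the paper: both reduce to the fact that $\Omega_j$ is a $j$-sheaf, apply \bref{thm:charns_shvs_and_jclemb_via_enr_orth} to get $\Omega_j \in (\DenseEmbWrt{j})^{\bot_\X}$, and read off that $\uX(d,\Omega_j)$ is invertible for each $j$-dense mono $d$. The only (immaterial) difference is the last bookkeeping step, where the paper simply notes that a morphism inverted by dualization is inverted by double-dualization, while you invoke \bref{thm:all_ladjs_ind_given_monad_inv_same_morphs} to identify $\Sigma_T$ with $\Sigma_{(-)^*}$; both are valid.
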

\begin{proof}
$\Omega_j$ is a $j$-sheaf (e.g., by \cite[3.27]{Joh:TopTh}), so by \bref{thm:charns_shvs_and_jclemb_via_enr_orth}, $\Omega_j \in (\DenseEmbWrt{j})^{\bot_\X}$.  Hence each morphism $d \in \DenseEmbWrt{j}$ is $\X$-orthogonal to $\Omega_j$ and therefore is inverted by the dualization functor $\uX(-,\Omega_j):\X \rightarrow \X^\op$ and hence by the double-dualization functor $T$ for $\Omega_j$.
\end{proof}

\begin{LemSub}\label{thm:sigmat_containedin_jdense}
$\Sigma_T \subseteq \DenseWrt{j}$.
\end{LemSub}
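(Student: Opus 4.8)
The plan is to translate the statement into a question about the dualization functor and then reduce, via the $j$-closure, to a single concrete fact about the topos. First I would record that the dualization $\X$-functor $(-)^* = \uX(-,\Omega_j):\uX \to \uX^{\op}$ is a left adjoint inducing $\TT$, so by \bref{thm:all_ladjs_ind_given_monad_inv_same_morphs} we have $\Sigma_T = \Sigma_{(-)^*}$; thus $f \in \Sigma_T$ means exactly that $f^* := \uX(f,\Omega_j)$ is an isomorphism. Given such an $f$, I would take its $(\DenseWrt{j},\ClEmbWrt{j})$-factorization $f = c \cdot d$ (available since this is an $\X$-factorization-system by \bref{thm:jdense_jclemb_enrfs}), with $d$ a $j$-dense morphism and $c:C \rightarrowtail B$ a $j$-closed monomorphism. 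Since $f$ is $j$-dense precisely when its $j$-closed part $c$ is invertible, the whole problem reduces to showing that this $c$ is an isomorphism.

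Next I would apply $\uX(-,\Omega_j)$ to the factorization to get $f^* = d^* \cdot c^*$ and argue that $c^*$ is an isomorphism. The morphism $d^*$ is a monomorphism: writing the $(\Epi\X,\Mono\X)$-factorization $d = d_1 \cdot e_1$ with $e_1$ an epimorphism and $d_1$ a $j$-dense mono, I have $d_1^*$ iso by \bref{thm:jdense_emb_containedin_sigmat}, while $e_1$, being an epimorphism in the topos $\X$, is an $\X$-epimorphism (\cite{Lu:EnrFactnSys}), so that $e_1^* = \uX(e_1,\Omega_j)$ is an $\X$-monomorphism by the definition of $\X$-epi \pbref{par:vmono}; hence $d^* = e_1^* \cdot d_1^*$ is a mono. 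On the other hand $f^* = d^* \cdot c^*$ is an isomorphism, so $d^*$ is also an epimorphism, and since a topos is balanced, $d^*$ is an isomorphism; therefore $c^* = (d^*)^{-1} \cdot f^*$ is an isomorphism as well.

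It remains to prove that a $j$-closed monomorphism $c:C \rightarrowtail B$ whose image $\uX(c,\Omega_j)$ under dualization is an isomorphism must itself be an isomorphism, and this is the step I expect to carry the real content, since it cannot be obtained by formal orthogonality manipulations alone (indeed an arbitrary epimorphism lies in $\DenseWrt{j}$ but not in $\Sigma_T$, so the two classes are not related by a naive duality). Here I would invoke the classification of $j$-closed subobjects: $\Omega_j$ classifies $j$-closed subobjects (\cite{MacMoe}, \cite{Joh:TopTh}), so morphisms $B \to \Omega_j$ are the global elements of $\uX(B,\Omega_j)$, and precomposition with $c$ is the action of $\uX(c,\Omega_j)$ on global elements. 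Let $\chi:B \to \Omega_j$ classify $c$, and let $\top_B:B \to \Omega_j$ classify the maximal subobject $1_B$. Pulling $c$ back along itself yields $1_C$, whence $\chi \cdot c = \top_C = \top_B \cdot c$; since $\uX(c,\Omega_j)$ is an isomorphism and hence injective on global elements, this forces $\chi = \top_B$, so $c$ classifies the same subobject as $1_B$, i.e. $c$ is an isomorphism. With $c$ invertible, $f = c \cdot d$ lies in $\DenseWrt{j}$, completing the proof that $\Sigma_T \subseteq \DenseWrt{j}$.
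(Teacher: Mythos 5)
Your proof is correct, but it takes a noticeably longer route than the paper's, and the comparison is instructive. Both arguments turn on the same non-formal ingredient, which you correctly identify as carrying the real content: $\Omega_j$ classifies $j$-closed subobjects, so an isomorphism of the form $\uX(h,\Omega_j)$ forces injectivity of pullback along $h$ on $j$-closed subobjects. The paper applies this directly to the given morphism $h:X_1 \rightarrow X_2$ in $\Sigma_T$: both $\overline{h(X_1)}$ and the maximal subobject $1_{X_2}$ are $j$-closed subobjects of $X_2$ whose inverse image along $h$ is all of $X_1$, so injectivity of $h^{-1}$ on $j$-closed subobjects gives $\overline{h(X_1)} = 1_{X_2}$, i.e.\ $h$ is $j$-dense --- one short paragraph, no factorization needed. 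You instead first factor $f = c \cdot d$ through the $(\DenseWrt{j},\ClEmbWrt{j})$-factorization \pbref{thm:jdense_jclemb_enrfs} and reduce to invertibility of the closed part $c$; this forces you to transfer the hypothesis ``$f^*$ iso'' to ``$c^*$ iso'', which costs you the auxiliary argument that $d^*$ is monic (via the epi-mono factorization of $d$, Lemma \bref{thm:jdense_emb_containedin_sigmat} together with the identification $\Sigma_T = \Sigma_{(-)^*}$ for the dense-mono factor, and the $\X$-epi property for the epi factor). That argument is sound --- though note that a monomorphism which is a split epimorphism is already an isomorphism in any category, so the appeal to balancedness is not needed --- and your final step (comparing the classifying maps of $c$ and $1_B$ after precomposition with $c$) is exactly the paper's subobject comparison, specialized to pulling back along $c$ rather than along $h$. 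In short, the two proofs share the same mathematical core; the paper's version shows that your reduction to the closed part is dispensable, since the classification argument can be run against $h$ itself, while your version isolates cleanly the statement that a $j$-closed monomorphism inverted by dualization is an isomorphism, which is perhaps of independent interest.
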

\begin{proof}
Suppose $h:X_1 \rightarrow X_2$ lies in $\Sigma_T$.  Since the monad $\TT$ is induced by the dualization adjunction $\uX(-,\Omega_j) \dashv \uX(-,\Omega_j):\X^\op \rightarrow \X$ \pbref{exa:dbl_dln}, we know by \bref{thm:all_ladjs_ind_given_monad_inv_same_morphs} that $\Sigma_T = \Sigma_{\uX(-,\Omega_j)}$, so $\uX(h,\Omega_j):\uX(X_2,\Omega_j) \rightarrow \uX(X_1,\Omega_j)$ is an isomorphism.  But $\Omega_j$ classifies $j$-closed subobjects (\cite[V.2.2]{MacMoe}), so $h^{-1}:\Sub_\M(X_2) \rightarrow \Sub_\M(X_2)$ induces a bijection between the $j$-closed subobjects of $X_2$ and $X_1$, respectively.  Letting $h(X_1)$ denote the image of $h$, considered as subobject of $X_2$, we observe that its $j$-closure $\overline{h(X_1)}$ is a $j$-closed subobject of $X_2$ and has inverse image $h^{-1}(\overline{h(X_1)}) = X_1$.  But $X_2$ itself is also a $j$-closed subobject of $X_2$ with the same inverse image, $h^{-1}(X_2) = X_1$, so $\overline{h(X_1)} = X_2$ (as subobjects of $X_2$), showing that $h$ is $j$-dense.
\end{proof}

\begin{ThmSub}\label{thm:lt_top}
Given a Lawvere-Tierney topology $j$ on a topos $\X$, if we take $\TT$ to be the double-dualization $\X$-monad for $\Omega_j$ and let $(\E,\M)$ be the epi-mono factorization system on $\X$, then the following hold:
\begin{enumerate}
\item A morphism is $\TT$-dense if and only if it is $j$-dense.
\item A monomorphism is $\TT$-closed if and only if it is $j$-closed.
\item Every morphism in $\X$ factors as a $\TT$-dense morphism followed by a $\TT$-closed monomorphism.
\item The $\TT$-closure of a subobject is the same as its $j$-closure.
\item An object of $\X$ is $\TT$-complete if and only if it is a $j$-sheaf.
\item An object of $\X$ is $\TT$-separated if and only if it is $j$-separated.
\item The idempotent $\X$-core $\tTT$ of $\TT$ exists, and its associated $\X$-reflective-subcategory of $\uX$ consists of the $j$-sheaves.
\end{enumerate}
\end{ThmSub}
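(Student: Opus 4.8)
The strategy is to reduce everything to the two containments established in Lemmas \bref{thm:jdense_emb_containedin_sigmat} and \bref{thm:sigmat_containedin_jdense}, namely
$$\DenseEmbWrt{j} \subseteq \Sigma_T \subseteq \DenseWrt{j},$$
and then to hand the remaining work to the general machinery of \S\bref{sec:cmpl_cl_dens}. First I would record that $(\E,\M) = (\Epi\X,\Mono\X)$ is an $\X$-proper $\X$-factorization-system on $\uX$: epimorphisms and monomorphisms in a topos are respectively $\X$-epis and $\X$-monos in $\uX$, we have $\M = \Mono_\X\uX$ closed under cotensors, and the epi--mono factorization is an ordinary factorization system, so the same reasoning as in \bref{thm:jdense_jclemb_enrfs} (via \cite[5.7]{Lu:EnrFactnSys}) upgrades it to an $\X$-factorization-system.

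For parts 1 and 2 I would apply $(-)^{\downarrow_\X}$ to the displayed chain and intersect with $\M$, obtaining
$$(\DenseWrt{j})^{\downarrow_\X} \cap \M \;\subseteq\; \Sigma_T^{\downarrow_\X} \cap \M \;\subseteq\; (\DenseEmbWrt{j})^{\downarrow_\X} \cap \M.$$
The right-hand side equals $\ClEmbWrt{j}$ by \bref{thm:charns_shvs_and_jclemb_via_enr_orth}, while the left-hand side also equals $\ClEmbWrt{j}$, since $(\DenseWrt{j},\ClEmbWrt{j})$ is an $\X$-(pre)factorization-system \pbref{thm:jdense_jclemb_enrfs} and $\ClEmbWrt{j} \subseteq \M$. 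The squeeze gives $\ClEmbWrt{\TT} = \Sigma_T^{\downarrow_\X} \cap \M = \ClEmbWrt{j}$, which is part 2; taking $(-)^{\uparrow_\X}$ and reusing that prefactorization-system yields $\DenseWrt{\TT} = (\ClEmbWrt{\TT})^{\uparrow_\X} = (\ClEmbWrt{j})^{\uparrow_\X} = \DenseWrt{j}$, which is part 1. Part 3 is then immediate from \bref{thm:jdense_jclemb_enrfs}, and part 4 follows because the $\TT$-closure operator \pbref{par:sigma_closure} is by construction the one determined by $(\DenseWrt{\TT},\ClEmbWrt{\TT}) = (\DenseWrt{j},\ClEmbWrt{j})$, which by \bref{par:cl_op_assoc_to_factn_sys} recovers the universal ($j$-)closure.

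Parts 5 and 6 I would deduce from the embedding characterizations. Writing $G = \uX(-,\Omega_j)$ \pbref{exa:dbl_dln}, so that the objects $GC$ are exactly the $\uX(C,\Omega_j)$, Proposition \bref{thm:sepcompl_iff_exists_clemb} (available since part 3 supplies Assumption \bref{assn:dense_closed_factn_sys}) says that $X$ is $\TT$-separated iff it admits an $\M$-embedding into some $\uX(C,\Omega_j)$, and that $X$ is $\Sigma_T$-complete and $\TT$-separated iff it admits a $\Sigma_T$-closed such embedding. Comparing these with Lemma \bref{thm:charn_jsep_jshv} and using part 2 (so that $\Sigma_T$-closed means $j$-closed) gives $\TT$-separated $\Leftrightarrow$ $j$-separated, which is part 6, and ``$\Sigma_T$-complete and $\TT$-separated'' $\Leftrightarrow$ $j$-sheaf. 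To close part 5 I would invoke \bref{thm:sigma_s_complete_are_sep_and_refl}(3), whose hypotheses now hold, to the effect that every $\TT$-complete object is automatically $\TT$-separated; hence $\TT$-complete coincides with ``$\Sigma_T$-complete and $\TT$-separated'' $=$ $j$-sheaf. Finally part 7 is the direct application of \bref{thm:sigma_s_complete_are_sep_and_refl}: its hypotheses are met, so the idempotent $\X$-core $\tTT$ exists and its associated $\X$-reflective-subcategory consists of the $\TT$-complete objects, which by part 5 are precisely the $j$-sheaves.

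The genuinely load-bearing work sits in the already-proven Lemmas \bref{thm:jdense_emb_containedin_sigmat} and \bref{thm:sigmat_containedin_jdense}, so the remainder is largely assembly. The one point demanding care is part 5: $\TT$-completeness is $\X$-orthogonality to the a priori larger class $\Sigma_T$, rather than merely to the $j$-dense monomorphisms, and reconciling the two requires both the embedding-into-$\uX(C,\Omega_j)$ device of \bref{thm:charn_jsep_jshv} together with \bref{thm:sepcompl_iff_exists_clemb}, and the automatic $\TT$-separatedness of $\TT$-complete objects furnished by \bref{thm:sigma_s_complete_are_sep_and_refl}.
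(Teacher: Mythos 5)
Your proof is correct and follows essentially the same route as the paper's: part 2 via the squeeze between $\DenseEmbWrt{j} \subseteq \Sigma_T \subseteq \DenseWrt{j}$ using \bref{thm:charns_shvs_and_jclemb_via_enr_orth} and \bref{thm:jdense_jclemb_enrfs}, then parts 1, 3, 4 by applying $(-)^{\uparrow_\X}$, then parts 5 and 6 by matching \bref{thm:charn_jsep_jshv} against \bref{thm:sepcompl_iff_exists_clemb} and invoking the automatic $\TT$-separatedness of $\TT$-complete objects from \bref{thm:sigma_s_complete_are_sep_and_refl}, and part 7 directly from that theorem. Your explicit preliminary verification that $(\Epi\X,\Mono\X)$ is an $\X$-proper $\X$-factorization-system on $\uX$ is a small, sound addition that the paper leaves implicit.
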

\begin{proof}
To prove 2, observe first that by \bref{thm:jdense_emb_containedin_sigmat} and \bref{thm:charns_shvs_and_jclemb_via_enr_orth},
$$\ClEmbWrt{\TT} = \Sigma_T^{\downarrow_\X} \cap \M \subseteq (\DenseEmbWrt{j})^{\downarrow_\X} \cap \M = \ClEmbWrt{j}.$$
Also, by \bref{thm:jdense_jclemb_enrfs} and \bref{thm:sigmat_containedin_jdense},
$$\ClEmbWrt{j} = (\DenseWrt{j})^{\downarrow_\X} \subseteq \Sigma_T^{\downarrow_\X}\;,$$
so $\ClEmbWrt{\TT} = \ClEmbWrt{j}$ and 2 is proved.  Now 1, 3, and 4 follow, since by \bref{thm:jdense_jclemb_enrfs} and \bref{rem:sigmadense_sigma_clemb_vprefs} we find that
$$\DenseWrt{j} = (\ClEmbWrt{j})^{\uparrow_\X} = (\ClEmbWrt{\TT})^{\uparrow_\X} = \DenseWrt{\TT}\;.$$

It now follows by \bref{thm:charn_jsep_jshv} that an object $X \in \X$ is $j$-separated (resp. a $j$-sheaf) if and only if there is an $\M$-embedding (resp. a $\TT$-closed $\M$-embedding) $X \rightarrowtail \uX(Y,\Omega_j)$ for some $Y \in \X$.  Hence, since $\TT$ is induced by the $\X$-adjunction $F \dashv G:\uX^\op \rightarrow \uX$ with $F = G = \uX(-,\Omega_j)$ \pbref{exa:dbl_dln}, we deduce 5 and 6 by \bref{thm:sepcompl_iff_exists_clemb}, using the fact that by \bref{thm:sigma_s_complete_are_sep_and_refl}, every $\TT$-complete object is $\TT$-separated.

In view of the above, 7 now follows from \bref{thm:sigma_s_complete_are_sep_and_refl}. 
\end{proof}

\begin{ParSub}\label{par:gr_qtoposes}
Let $\Y$ be a \textit{Grothendieck quasitopos}; equivalently, let $\Y$ be the category of \mbox{$K$-separated} $J$-sheaves on a small \textit{bisite} $(\C,J,K)$ (\cite[C2.2.13]{Joh:Ele}), so that $\C$ is a small category and $J,K$ are coverages on $\C$ with $J \subseteq K$.  Let $\X := [\C^\op,\Set]$ be the presheaf topos, and let $j,k$ be the Lawvere-Tierney topologies associated to $J,K$, respectively.  For each $i := j,k$, let $\TT_i$ be the double-dualization $\X$-monad on $\uX$ for $\Omega_i$, and let $\Sigma_i := \Sigma_{T_i}$ be the class of all morphisms inverted by $T_i$.  By \bref{thm:lt_top}, $\Y$ consists of the \mbox{$\TT_k$-separated} $\TT_j$-complete (equivalently, $\Sigma_j$-complete) objects of $\X$, i.e  $\Y = \uX_{(\TT_k,\Sigma_j)}$.  By \bref{thm:morphs_inv_by_t_vorth_pair}, $(\Sigma_i, \Sigma_i^{\bot_\X})$ is an $\X$-orthogonal-pair for each $i := j,k$, and by \bref{thm:lt_top} 5,
$$\Sigma_k^{\bot_\X} = \Shv(\C,K) \subseteq \Shv(\C,J) = \Sigma_j^{\bot_\X}\;,$$
so we deduce that $\Sigma_j \subseteq \Sigma_k$.  Applying \bref{thm:lt_top} 3 to $j$, we find that $\TT_k$ and $\Sigma_j$ satisfy the hypotheses of \bref{thm:refl_orth}, and we deduce that $\Y = \uX_{(\TT_k,\Sigma_j)}$ is an $\X$-enriched reflective subcategory of $\uX$.
\end{ParSub}

\bibliographystyle{amsplain}
\bibliography{bib}

\end{document}